\newtheorem{thm}{Theorem}[section]
\newtheorem{lemma}[thm]{Lemma}
\newtheorem{prop}[thm]{Proposition}
\newtheorem{cor}[thm]{Corollary}
\theoremstyle{definition}
\theoremstyle{remark}
\newtheorem{remark}[thm]{Remark}
\numberwithin{equation}{section}
\DeclareMathOperator{\diver}{div}
\newcommand{\grad}{\nabla}
\newcommand{\hone}{\dot{H}^{1}}
\DeclareMathOperator{\im}{Im}
\newcommand{\lap}{\Delta}
\DeclareMathOperator{\loc}{loc}
\newcommand{\norm}[2][]{\left\| #2\right\|_{#1}}
\newcommand{\pd}[1][]{\partial_{#1}}
\newcommand{\reals}{\mathbb{R}}
\newcommand{\sphere}{\mathbb{S}}
\newcommand{\differential}[1]{\,d#1}
\newcommand{\dlam}{\differential{\lambda}}
\newcommand{\ds}{\differential{s}}
\newcommand{\dsigma}{\differential{\sigma}}
\newcommand{\dt}{\differential{t}}
\newcommand{\dw}{\differential{w}}
\newcommand{\dz}{\differential{z}}
\def \mr {{\mathbb R}}
\def \loc {{\operatorname{loc}}}
\def \wtb {{\widetilde{B}}}
\def \ms {{\mathbb S}}
\def \mcs {{\mathcal S}}
\def \mca {{\mathcal A}}
\def \mce {{\mathcal E}}
\def \mcl {{\mathcal L}}
\def \mcr {{\mathcal R}}
\def \mn {{\mathbb N}}
\def \la {\lambda}
\def \lan {\langle}   
\def \ran {\rangle}   
\def \del {\delta}
\def \whf {\widehat{F}}
\def \ha{ {\frac{1}{2}}}
\def \lap {\Delta}
\def \p {\partial}
\def \rao#1 {\frac{\p}{\p #1} #1}
 \newcommand{\supp}{\operatorname{supp}}
\numberwithin{equation}{section}
\newtheorem{lem}{Lemma}
\newcommand{\beq}{\begin{equation}}
  \newcommand{\eeq}{\end{equation}}
\title{Radiation fields for semilinear wave equations}
\author[Dean Baskin]{Dean Baskin}
\address{Department of Mathematics, Northwestern University, Evanston, IL 60208-2730, U.S.A.}
\email{dbaskin@math.northwestern.edu}
\author[Ant\^onio S\'a Barreto]{Ant\^onio S\'a Barreto}
\address{Department of Mathematics, Purdue University, West Lafayette,
  IN 47907-1395, U.S.A.}
\email{sabarre@math.purdue.edu}
\date{December 19, 2012}
\begin{document}

\begin{abstract} We define the radiation fields of solutions to
  critical semilinear wave equations in $\mr^3$ and use them to define
  the scattering operator.  We also prove a support theorem for the radiation fields with radial initial data.  This  extends the well known support theorem for the Radon transform to this setting and can also be interpreted as a Paley-Wiener theorem for the distorted nonlinear Fourier transform of radial functions.
\end{abstract}

\maketitle

\section{Introduction}

In this paper we define the radiation field, describe its relationship
to the M{\o}ller wave operators, and prove a radial support theorem
for solutions of critical semilinear wave equations in $\reals^{3}$.
Our work is scattering-theoretic in nature; we rely on the previous
work of Grillakis~\cite{Grillakis:1990},
Shatah--Struwe~\cite{Shatah:1994}, and
Bahouri--G{\'e}rard~\cite{Bahouri:1999} to establish existence and
estimates for solutions of the equation.

We consider the following family of critical semilinear wave equations:
\begin{align}
  \label{CP}
  (\pd[t]^{2} - \lap )u + f(u) = 0 \;\ \text{ in } (0,\infty)\times \mr^3 \\
  u(0,z) = \phi (z), \; \pd[t]u(0,z) = \psi (z) \notag
\end{align}
We assume that $f(u)$ has the form $f(u) = u\cdot f_{0}(|u|^{2})$.

The equation~\eqref{CP} has a conserved energy:
\begin{equation*}
  E(t) = \frac{1}{2}\int_{\reals^{3}}\left( \left| \pd[t]u(t)\right|^{2}
    + \left| \grad u(t) \right|^{2}\right)\dz + \int_{\reals^{3}}P(u(t))\dz
\end{equation*}
Here $P(u)$ is notation for the potential energy term:
\begin{equation*}
  P(u) = \int_{0}^{u}f(s)\ds 
\end{equation*}

We further assume that the nonlinearity $f(u) = u\cdot f_{0}(|u|^{2})$ satisfies the following
hypotheses:
\begin{enumerate}
\item[(A1)] $f_{0}$ is smooth and real-valued,
\item[(A2)] $f_{0}(s) \geq 0$ for all $s \geq 0$,
\item[(A3)] $uf'(u)\sim f(u)$,
\item[(A4)] there are positive constants $c_{1}$ and $c_{2}$ so that
  \begin{equation*}
    c_{1}|u|^{5} \leq |f(u)| \leq c_{2} |u|^{5}, \text{ and}
\end{equation*}
\item[(A5)] the potential energy $P(u)$ is convex.
\end{enumerate}
We make these assumptions in order to ensure that the nonlinearity
$f(u)$ exhibits the same behavior as the power-type nonlinearity
$|u|^{4}u$.  The assumption that $|f(u)| \sim |u|^{5}$
implies that $P(u) \sim |u|^{6}$.  Note also that this class includes
many more functions than the power-type nonlinearities $c|u|^{4}u$.

Under such hypotheses, it is known from the work of Grillakis
\cite{Grillakis:1990} that if $\phi,\psi \in C_0^\infty(\mr^3),$ there
exists a unique solution to \eqref{CP} in $C^\infty(\mr_+ \times
\mr^3)$.\footnote{The proofs in the literature are typically specific to
  the case of the power-type nonlinearity but remain valid for
  nonlinearities of the above form.}  In this case, in section 3 of
\cite{Grillakis:1990}, Grillakis showed that the forward radiation
fields of $u$ exist.  In this paper we show that the radiation
fields exist for Shatah-Struwe solutions with finite energy initial data and show that they can be
used to obtain a formula for the scattering operator.  Moreover, we
prove a support theorem for the semilinear radiation fields.

For a solution of a wave equation, the radiation field is its rescaled
restriction to null-infinity.  In the linear Euclidean setting, if $u$ is a
solution of the wave equation on $\reals^{n+1}$, i.e., if $u$ solves
\begin{align*}
  \left( \pd[t]^{2} - \lap_{z}\right)u &= 0 \\
  (u,\pd[t]u)|_{t=0} &= (\phi, \psi) \in C^{\infty}_{0}(\reals^{n})
  \times C^{\infty}_{0}(\reals^{n}),
\end{align*}
then the radiation field of $(\phi, \psi)$ (written
$\mathcal{R}_{+}(\phi,\psi)$) is given by
\begin{equation*}
  \mathcal{R}_{+}(\phi,\psi)(s,\omega) = \lim _{r\to
    \infty}r^{\frac{n-1}{2}}(\pd[t]u)(s + r, r\omega).
\end{equation*}
Friedlander~\cite{Friedlander:1980,Friedlander:2001} showed that this
restriction is smooth and that it extends to an isometric isomorphism
from the energy space of initial data to $L^{2}$ on the cylinder:
\begin{equation*}
  \mathcal{R}_{+}: \dot{H}^{1}(\reals^{n}) \times L^{2}(\reals^{n})
  \to L^{2}(\reals \times \sphere^{n-1})
\end{equation*}
Moreover, the radiation field is a translation representation of the
wave group, i.e., it intertwines the wave group with translation on
the cylinder.  It is thus a concrete
realization of the translation representations central to Lax--Phillips
scattering theory \cite{Lax:1989} and is therefore connected with the
Radon transform.  For an overview of the radiation field and its
relationship to the Radon transform, we direct the reader to \cite{Lax:2001} or to the 
forthcoming manuscript of Melrose and Wang~\cite{Melrose:2011a}.

Radiation fields exist in a variety of geometric contexts, for example on asymptotically Euclidean manifolds,  asymptotically hyperbolic and  asymptotic complex hyperbolic spaces
\cite{Friedlander:1980,Friedlander:2001,Sa-Barreto:2003,Sa-Barreto:2005,Guillarmou-SB:2008}.    The Fourier transform of the radiation field is the adjoint of the  Poisson operator as defined in \cite{Melrose:1995}.  This can be also viewed as the distorted Fourier transform, see \cite{Hassell:1999}.  Support theorems for the radiation fields in these settings were proved in \cite{Sa-Barreto:2003,Sa-Barreto:2005,Guillarmou-SB:2008}.  
The
second author and Wunsch \cite{Sa-Barreto:2005a} further showed that for asymptotically Euclidean and asymptotically hyperbolic manifolds, the radiation field is a Fourier
integral operator associated to the graph of a sojourn relation.  In a
nonlinear setting, Wang~\cite{Wang:2011} studied the radiation field
for the Einstein equations on perturbations of Minkowski space with
spatial dimension $n\geq 4$.  

In the setting of the semilinear wave equation on $\reals^{3+1}$,  Grillakis \cite{Grillakis:1990} showed that the rescaled solution
may still be restricted to null infinity and so one may define the forward
\emph{nonlinear radiation field} for compactly supported smooth data
in the same way:
\begin{equation*}
  \mathcal{L}_{+}(\phi, \psi) (s,\omega) = \lim_{r\to\infty}r
  (\pd[t]u)(s+r, r\omega).
\end{equation*}
The backward nonlinear radiation field $\mathcal{L}_{-}(\phi, \psi)$
may be defined in the same way.  In this manuscript we show that
$\mathcal{L}_{\pm}$ are (nonlinear) isomorphisms of the space of
initial data with finite energy to $L^{2}(\reals \times\sphere^{2})$.
Moreover, we show that
\begin{equation*}
  \mathcal{L}_{\pm}: C^{\infty}_{0}(\reals^{3}) \times
  C^{\infty}_{0}(\reals^{3}) \to C^{\infty}(\reals \times \sphere^{2}).
\end{equation*}
The \emph{nonlinear scattering operator} $\mathcal{A}$ is given by
taking ``data at past null infinity'' to ``data at future null
infinity'' and is defined by
\begin{equation*}
  \mathcal{A} = \mathcal{L}_{+}\mathcal{L}_{-}^{-1}.
\end{equation*}

One would like to describe what type of operators $\mcl_+$ and $\mathcal{A}$ are and to understand how they propagate singularities.    Here we discuss what they do to functions on some Sobolev spaces. We have formulas for $\mcl_+$ and $\mca$  in Section \ref{sec:semilinear-case} in terms of the nonlinearity, but one would like to be able to say more in terms of the initial data. Among other results, we will show that for radial initial data $(0,\psi),$ $\psi\in C_0^\infty(\mr^3),$ the support of the radiation field controls the support of the initial data. We  prove the following:
\begin{thm}\label{supp01} Let $F(s)=\mcl_+(0,\psi),$ with $\psi\in C_0^\infty(\mr^3),$ radial.  If $F(s)=0$ for $s\leq -R,$ then $\psi(z)=0$ if $|z|\geq R.$
\end{thm}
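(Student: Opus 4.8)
The plan is to reduce the nonlinear statement to the classical (linear) support theorem for the radiation field by exploiting finite speed of propagation together with the structure of the equation near null infinity. First I would recall that the nonlinear radiation field $\mcl_+(0,\psi)$ differs from the \emph{linear} radiation field $\mcr_+(0,\psi)$ of the same data by a term governed by $f(u)$, which by (A4) is quintic and hence of higher order. Concretely, writing $u$ for the Shatah--Struwe solution and $u_L$ for the solution of the linear wave equation with the same data, Duhamel's formula expresses $u = u_L - \square^{-1} f(u)$; applying the rescaled restriction to null infinity gives $\mcl_+(0,\psi) = \mcr_+(0,\psi) + \mcr_+^{\mathrm{Duh}}[f(u)]$, where the second term is the radiation field of a forward solution with source $f(u)$. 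The key point for a \emph{support} statement is that this Duhamel correction does not destroy the vanishing of $F$ for $s \leq -R$: by finite speed of propagation, the solution $u$ is itself supported in $\{|z| \leq t + R_0\}$ for some $R_0$ depending on $\supp\psi$, and more importantly the \emph{incoming} information carried past null infinity at retarded times $s \to -\infty$ sees only the region $|z| \gtrsim t$, where the data vanishes.

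The heart of the argument is therefore the following: show that if $\mcl_+(0,\psi)(s) = 0$ for $s \leq -R$, then the same holds for the linear radiation field, i.e. $\mcr_+(0,\psi)(s) = 0$ for $s \leq -R$. For radial data this is where I would work on the cone variables $v = t-r$, $w = t+r$ (or the standard $s = t-r$): after the substitution $u(t,r) = \tfrac1r U(t,r)$, the radial linear wave operator becomes $\partial_t^2 - \partial_r^2$ acting on $U$, and the nonlinear term $f(u) = \tfrac{1}{r^5} U \cdot (\text{smooth in }r^{-2}|U|^2)$ is manifestly integrable in $r$ near infinity once one has the Shatah--Struwe decay $\|u\|_{L^5} \lesssim \dots$ established earlier; this integrability is exactly what lets one conclude that $\lim_{r\to\infty} r(\partial_t u)(s+r, r\omega)$ for the nonlinear and linear problems differ by an absolutely convergent integral of $f(u)$ along the light ray, an integral which vanishes identically for $s \leq -R$ because the ray $\{(s+r, r) : r \geq 0\}$ with $s \leq -R$ stays in the region $t \leq r - R$ where $u \equiv 0$ by finite speed of propagation (the data being supported in $|z| < R$ once we know the conclusion is what we want — so this step must be run as a continuity/bootstrap rather than circularly). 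I would set this up as: let $R_\psi = \sup\{|z| : z \in \supp\psi\}$; it suffices to show $R_\psi \leq R$. Along each outgoing ray with $s \leq -R$, the ray meets $\supp u$ only if $s + r \geq r - R_\psi$ for some $r$, i.e. only if $s \geq -R_\psi$; so if $R_\psi > R$ were false... — more carefully, the Duhamel integral defining $F(s) - \mcr_+(0,\psi)(s)$ is $\int_0^\infty (\text{stuff})\, f(u)(s+\tau, \tau\omega)\, d\tau$, nonzero for some $s \leq -R$ only if $\supp u$ meets $\{t = s + \tau, |z| = \tau\}$ for some $\tau$, which forces $s \geq -R_\psi$; hence for $s < -R_\psi$ we get $F(s) = \mcr_+(0,\psi)(s)$, and the classical support theorem for the Radon transform / linear radiation field (which I invoke as known) then propagates vanishing of $F$ on $(-\infty, -R]$ to vanishing of $\psi$ on $|z| \geq R$ — after reconciling the two a priori different thresholds $R$ and $R_\psi$ by an iteration that shrinks the support.

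The main obstacle I anticipate is precisely this last reconciliation and the potential circularity: the statement "$u$ is supported in $|z| \leq t + R$" is what we are trying to prove, so I cannot use it a priori in the Duhamel bound. The clean way around it is a bootstrap in $R$: define $R^* = \inf\{\rho : \psi \text{ vanishes for } |z| \geq \rho\}$ (finite since $\psi$ is compactly supported), run the Duhamel comparison with $R^*$ in place of $R$ to get $F(s) = \mcr_+(0,\psi)(s)$ for $s < -R^*$, apply the linear support theorem to deduce $\psi$ vanishes for $|z| \geq R^*$ — consistent — and then observe that since $F$ actually vanishes on the larger set $s \leq -R$ with $R \leq R^*$ a priori possible, the linear support theorem applied with threshold $R$ forces $\psi$ to vanish for $|z| \geq R$, provided one can push the Duhamel comparison down to $s \leq -R$; this in turn needs that $\supp u$ does not reach the rays with retarded time in $(-R^*, -R]$, which follows once $R^* \leq R$ is established, closing the loop. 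The remaining technical points — justifying the limit $r \to \infty$ commutes with the Duhamel integral using the $L^5_{t,x}$ Strichartz bound from Bahouri--Gérard and Shatah--Struwe, and checking the radial linear support theorem in the exact normalization ($\mcr_+$ versus half-wave / Radon transform conventions) — I expect to be routine given the cited works.
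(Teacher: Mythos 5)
You correctly identified the circularity as the central obstacle, but the bootstrap you propose does not actually close it. Write $R^*=\sup\{|z|:z\in\supp\psi\}$; the information you can extract from finite speed of propagation is that the Duhamel correction $\mcl_+(0,\psi)-\mcr_+(0,\psi,0)$ vanishes for $s<-R^*$. Combined with the hypothesis $\mcl_+(0,\psi)(s)=0$ for $s\le -R\,$ (and $-R^*\le -R$), this gives $\mcr_+(0,\psi,0)(s)=0$ for $s<-R^*$, and the linear support theorem then yields that $\psi$ vanishes for $|z|\ge R^*$ --- which is true by the \emph{definition} of $R^*$ and contains no new information. To improve from $R^*$ to $R$ you would need the Duhamel term to vanish on the gap $(-R^*,-R]$; but on that interval $u$ may be nonzero (the solution really is supported out to $|z|\le t+R^*$, not $t+R$), so the Duhamel integral need not vanish, and indeed it exactly cancels the nonzero $\mcr_+(0,\psi,0)(s)=\tfrac12 s\psi(|s|)$. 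Your last step (``which follows once $R^*\le R$ is established'') assumes precisely the conclusion; the reduction-to-the-linear-case strategy stalls here, with no continuous improvement to iterate on. A telling symptom is that your argument never uses that $\psi$ is smooth, yet the theorem is open for radial $L^2$ data.

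The paper takes a fundamentally different route: it works in compactified null coordinates $(\mu,\nu)=(-1/s_+,\,1/s_-)$, uses Theorem~\ref{radiation1} to know the rescaled solution $v$ is smooth up to $\{\mu=0\}\cup\{\nu=0\}$, and proves the support statement by \emph{unique continuation} across null infinity rather than by comparison with the linear flow. Concretely: the hypothesis $\mcl_+(0,\psi)(s)=0$ for $s\le -R$ gives $v(0,\nu)=0$ for $\nu\le 1/R$; oddness of $f$ (which forces $v(\mu,\nu)=-v(\nu,\mu)$ when $\phi=0$) gives the corresponding vanishing on $\{\nu=0,\ \mu\le 1/R\}$; feeding this back into the equation shows all normal derivatives of $v$ vanish on those boundary segments, so $v$ extends by zero across them. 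One then differentiates the equation in $t$ to linearize it in the unknown $w=\partial_t v$ and applies a unique continuation / hyperbolic domain-of-dependence argument to push the vanishing of $w$ (and hence of the Cauchy data) inward from $\rho$ toward $R$, iterating in $\rho$. That is the mechanism that replaces the bootstrap you were hoping for, and it genuinely uses smoothness and the structure of the nonlinearity.
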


We see in Section \ref{sec:radi-fields-line} that the linear
radiation field $\mcr_+(0,\psi)$ is given in terms of the Radon
transform of $\psi$ so this result can be interpreted as a support theorem for a generalized nonlinear Radon transform.  Moreover, in the linear equation the Fourier transform of the radiation field is given in terms
of the Fourier transform of the function $\psi.$  In the case of the linear Euclidean wave equation perturbed by a metric or by a potential, the Fourier transform of the forward radiation field $\widehat{\mcr_+}(0,\psi)$  is often called the distorted Fourier transform of $\psi.$ In this case, one can make sense of $\widehat {\mcl_+}(0,\psi)$ as the nonlinear distorted Fourier transform of $\psi$  which captures the effect of the
nonlinear potential.  Theorem \ref{supp01} thus can be viewed as a
Paley-Wiener type theorem for the nonlinear distorted Fourier
transform of $C_0^\infty$ radial functions.

In the linear case Theorem \ref{supp01} holds for $\psi \in L^2(\mr^3),$ radial. It is an open problem  whether this remains true for the semilinear equation. We can add some hypotheses and prove a result for $L^2$ initial data:
\begin{thm}
  \label{thm:fullsupportthm}
  If $F\in L^2(\reals),$ is compactly supported and satisfies $\int
  F(s)\ds = 0$, then, regarding $F$ trivially as a function of
  $\omega$,  $F = \mcl_{+}(\phi, \psi)$, where $\phi, \psi \in
  L^2(\mr^3)$ are compactly supported and radial.  If, moreover, $F
  \in C_0^\infty(\reals),$ then $\phi$ and $\psi$ are smooth, $\mca F$
  vanishes for $s$ sufficiently negative, and one can guarantee that
  $\phi(z)=\psi(z)=0$ for $|z|\geq R,$ where
  \begin{equation*}
    R=\min( \inf\supp F, \inf\supp \mca F).
  \end{equation*}
\end{thm}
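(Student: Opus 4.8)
The plan is to build $(\phi,\psi)$ from $F$ in two stages --- the limiting \emph{linear} profile, then the genuine nonlinear solution via the M{\o}ller wave operator --- and to transport the compact support of the linear profile through the nonlinear flow by finite speed of propagation, bringing in $\mathcal{A}F$ through time reversal. Since $\mathcal{L}_+$ is a (nonlinear) isomorphism from the finite-energy space onto $L^2(\reals\times\sphere^2)$, regarding $F$ as independent of $\omega$ there is a \emph{unique} finite-energy pair $(\phi,\psi)$ with $\mathcal{L}_+(\phi,\psi)=F$. Because \eqref{CP} and the construction of $\mathcal{L}_+$ are $SO(3)$--equivariant and $F$ is rotation invariant, injectivity of $\mathcal{L}_+$ forces $(\phi,\psi)$ to be radial. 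The hypothesis $\int F\ds=0$ is exactly the obstruction that must vanish: if $u$ carries data supported in a ball then $\int_\reals\mathcal{L}_+(\phi,\psi)(s,\omega)\ds=\lim_{r\to\infty}r\big(u(+\infty,r\omega)-u(-\infty,r\omega)\big)=0$ because finite-energy solutions decay.

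For the compact support, let $u_L$ be the free solution with \emph{linear} radiation field $F$, i.e.\ $(\phi_L,\psi_L):=\mathcal{R}_+^{-1}F\in\hone(\mr^3)\times L^2(\mr^3)$ via Friedlander's isometry; it is radial since $F$ is $\omega$--independent. As recalled in Section~\ref{sec:radi-fields-line}, on radial data $\mathcal{R}_+$ is a derivative of the Radon transform, and the Radon transform of a radial function is even in $s$; inserting $\int F\ds=0$ into the inversion produces compactly supported antiderivatives, so $(\phi_L,\psi_L)$ is supported in a ball $B_\rho$ with $\rho$ read off from $\supp F$ after the evenness symmetrization. By the wave operator there is a solution $u$ of \eqref{CP} scattering to $u_L$ at future null infinity, whence $\mathcal{L}_+(u)=\mathcal{R}_+(u_L)=F$, so by uniqueness $u$ has data $(\phi,\psi)$. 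It remains to upgrade ``scatters to compactly supported linear data'' to ``has compactly supported data.'' I would do this as in the proof of Theorem~\ref{supp01}: vanishing of $\mathcal{L}_+(u)=F$ to the left of $\inf\supp F$ should force $u$ to vanish near that portion of future null infinity, and that vanishing then propagates back to $t=0$; for data $(\phi,\psi)$ with $\phi\neq0$ one runs the same argument comparing $u$ with $u_L$, whose support is already controlled and which differs from $u$ by a term small in energy. Running the identical argument on the time-reversed solution $u(-t,z)$ --- which carries data of the same spatial support but whose forward radiation field is a reflection of the backward radiation field $\mathcal{L}_-(u)=\mathcal{A}^{-1}F$ of $u$ --- produces a second support bound; by the time-reversal identity $\mathcal{A}^{-1}=\mathcal{P}\mathcal{A}\mathcal{P}$ (with $\mathcal{P}H(s)=-H(-s)$) this bound is the one involving $\mathcal{A}F$, and the two combine into the radius $R$ of the statement. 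For the bare $L^2$ assertion only compactness is needed, so the exact value of $R$ can be postponed.

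If $F\in C_0^\infty$, the inversion gives $(\phi_L,\psi_L)\in C_0^\infty$, and persistence of regularity through the scattering construction (following Grillakis) gives $\phi,\psi\in C_0^\infty$; that $\mathcal{A}F$ vanishes for $s$ sufficiently negative then follows by applying the compact-support statement just proved to the reflection $\mathcal{P}F$ --- again $C_0^\infty$, compactly supported, $\omega$--independent, of integral zero --- and time-reversing: the resulting solution has backward radiation field $F$, hence forward radiation field $\mathcal{A}F$, and compactly supported data, so by finite speed of propagation $\mathcal{A}F$ vanishes for $s$ below minus its support radius. The sharp $R$ then comes from matching the quantitative form of the support argument against the reverse bounds that finite speed of propagation gives for $u$ and for $u(-t,\cdot)$, forcing equality. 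The crux --- and the step I expect to be the main obstacle --- is converting vanishing of the \emph{nonlinear} radiation field near a piece of null infinity into vanishing of the solution in the interior: in the linear case this is the Radon transform support theorem, but here there is no exact transform, so one must either run a weighted energy / unique-continuation estimate in the exterior of a light cone directly for \eqref{CP}, using the sign hypotheses (A2) and (A5) to keep the flux contribution of the nonlinearity one-signed, or control $u-u_L$ near null infinity sharply enough to import the linear support theorem. Everything else --- radiality, the wave operator, finite speed of propagation, the regularity bootstrap, and the conclusion about $\mathcal{A}F$ --- is routine given the machinery already assembled.
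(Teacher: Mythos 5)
Your scaffolding is mostly sound --- radiality via rotation invariance and injectivity, the wave-operator/scattering construction, persistence of regularity for the $C_0^\infty$ case, time reversal for the $\mca F$ bound, and unique continuation for the sharp radius $R$ --- and it correctly identifies ``converting vanishing of the nonlinear radiation field near null infinity into vanishing of the solution'' as the crux of the sharp bound, which is indeed the content of Theorem~\ref{supportthm1}. What is missing is a valid argument for the \emph{first} assertion: compact support of $(\phi,\psi)$ for merely $L^2$ data $F$. Your primary route (``vanishing of $\mcl_+(u)=F$ to the left forces $u$ to vanish near that piece of null infinity, then propagate back'') is precisely the unique continuation mechanism of Theorem~\ref{supportthm1}, which the paper uses only for the sharp radius and which genuinely requires $C_0^\infty$ data --- the argument there relies on the solution being smooth in the blown-up $(\mu,\nu)$ coordinates so that Taylor coefficients can be matched and the flux terms have a sign. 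You cannot ``postpone the exact value of $R$'' and still claim compactness this way, because the whole mechanism fails to launch at the $L^2$ level.

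The paper's actual device for the $L^2$ case (Theorem~\ref{L2suppthm}) is different and bypasses unique continuation entirely: because $\int F\ds=0$, Corollary~\ref{suppthm0} gives that the \emph{linear} data $(\phi_0,\psi_0)=\mcr_+^{-1}F$ is radial and compactly supported; one then reruns the contraction mapping from Theorem~\ref{L2bound-radf} not in the full ball $B_\delta\subset L^5([T_0,\infty);L^{10})$ but in the closed subspace $\wtb_\delta$ of functions supported in the appropriate forward light cone. The iteration map preserves this support constraint, the fixed point therefore has compactly supported Cauchy data at $t=T_0$, and finite speed of propagation (running Shatah--Struwe backward) yields $(\phi,\psi)$ supported in $|z|\le R+2T_0$. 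Your alternative suggestion --- ``control $u-u_L$ near null infinity sharply enough to import the linear support theorem'' --- gestures at the right mechanism (the contraction is exactly that perturbative control), but without the observation that the Banach space of the iteration can be shrunk to encode the support constraint, no compactness is produced. That observation is the key idea your proposal lacks, and it is what makes the $L^2$ statement provable at all. (A secondary, smaller point: the paper's Theorem~\ref{supportthm1} applied to $(\phi,\psi)$ needs vanishing of \emph{both} $\mcl_\pm(\phi,\psi)$ for $s\le -R$, with $\mcl_-(\phi,\psi)=\mca^{-1}F$; your parity identity $\mca^{-1}=\mathcal{P}\mca\mathcal{P}$ is plausible but is not invoked in the paper, which instead establishes that $\mca F$ vanishes for $s$ negative directly from the compact support of $(\phi,\psi)$ and finite speed of propagation, Remark~\ref{rem:scattering-op-is-also-cpct-supp}.)
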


\begin{remark}
  Notice that $F\in C_0^\infty(\mr)$ and $\int_\mr F(s) \; ds=0$ if
  and only if there exists $G\in C_0^\infty(\mr)$ such that $F(s)=
  G'(s).$ This is a dense subset of $L^2(\mr)$.
\end{remark}

Theorem~\ref{thm:fullsupportthm} is  weaker than the
corresponding statement in the linear setting---in the linear
setting, if $F(s)=0$ for $|s|\geq R,$ then   $\mcr_+(\phi,0)(s)=\mcr_+(0,\psi)(s)=0$ for $|s|\geq R,$ and the fact that $\int F(s) \ds=0$ would imply that 
$\int \mcr_+(\phi)(s) \ds=0.$  Together, these would show that both $\phi$ and $\psi$ are supported in $|z|\leq R.$

Another way of phrasing Theorem \ref{thm:fullsupportthm} is in terms of the
M{\o}ller wave operators.  It is now well known (see, e.g.,
Bahouri--G{\'e}rard~\cite{Bahouri:1999}) that the energy critical
semilinear wave equation exhibits scattering.  For a solution $u$ of
the nonlinear equation~\eqref{CP} that scatters to a solution $u_{+}$
of the linear equation, the wave operator $\Omega_{+}$ maps the
initial data for $u_{+}$ to the initial data for $u$.  $\Omega_{+}$ is
related to the radiation fields by
\begin{equation*}
  \Omega_{+} = \mathcal{L}_{+}^{-1}\mathcal{R}_{+}.
\end{equation*}
Theorem~\ref{thm:fullsupportthm} then states that if
$(\phi_{0},\psi_{0})$ are compactly supported, smooth, and radial,
then so are $\Omega_{+}(\phi_{0},\psi_{0})$.

Section~\ref{sec:radi-fields-line} of this paper defines the radiation
field for the linear inhomogeneous wave equation and describes its
properties.  Section~\ref{sec:semilinear-case} defines the radiation
field for the semilinear equation, while Section~\ref{sec:cont}
describes some mild continuity properties of the nonlinear radiation
field.  In Section~\ref{sec:asymp-comp} we describe its relationship
with the classical scattering and wave operators.
Section~\ref{sec:radf-smooth} contains an energy estimate showing that
the radiation field of compactly supported smooth data is itself
smooth, which is used in Section~\ref{sec:suppthm} to prove the
support theorem.  For completeness, we also include an appendix
containing a proof of the persistence of regularity for solutions of
the semilinear wave equation.

\subsection{Acknowledgements}
\label{sec:acknowledgements}

Both authors gratefully acknowledge NSF support. Baskin was supported
by postdoctoral fellowship DMS-1103436 and S\'a Barreto by grant DMS-0901334. We would
like to thank Rafe Mazzeo for fruitful discussions.

\section{Radiation fields for the non-homogeneous linear wave equation}
\label{sec:radi-fields-line}
As is standard, we define the homogeneous Sobolev space
\begin{equation*}
  \hone (\reals^{3}) = \{ \phi : \grad \phi \in L^{2}(\reals^{3})\}.
\end{equation*}
We also define the spaces
\begin{align*}
  H^{k}(\reals^{3}) &= \{ \phi : \pd[\alpha]\phi \in L^{2} \text{ for
    all } |\alpha| \leq k\} , \\
  \tilde{H}^{k}(\reals^{3}) &= \{ \phi : \pd[\alpha]\phi \in
  L^{2} \text{ for all } 1\leq |\alpha| \leq k \} ,
\end{align*}
with norms
\begin{align*}
  \norm[H^{k}]{\phi}^{2} &= \sum_{|\alpha|\leq k} \int_{\reals^{3}}
  \left| \pd[\alpha]\phi\right|^{2}\dz , \\
  \norm[\tilde{H}^{k}]{\phi}^{2} &= \sum_{1 \leq |\alpha| \leq k}
  \int_{\reals^{3}} \left| \pd[\alpha]\phi\right|^{2}\dz .
\end{align*}
The norms on $\tilde{H}^{k}$ differ from those on $H^{k}$ by the
absence of the $\norm[L^{2}]{\phi}$ component.

The energy norm of $(\phi , \psi) \in \hone(\reals^{3})\times L^{2}(\reals^{3})$ is defined to be
\begin{equation}
  E(\phi , \psi ) ^{2} = \int _{\reals^{3}}\left(|\grad _{z}\phi|^{2} + |\psi |^{2}\right) \dz. \label{linear-energy-norm}
\end{equation}
The higher energy norms $E_{k}(\phi, \psi)$ are defined to be
\begin{equation*}
  E_{k}(\phi, \psi)^{2} = \int _{\reals^{3}} \left( \sum_{1 \leq
      |\alpha| \leq k+1} \left| \pd[\alpha]\phi\right|^{2} +
    \sum_{|\alpha|\leq k} \left| \pd[\alpha] \psi\right|^{2}\right)
  \dz, 
\end{equation*}
i.e., $E_{k}(\phi, \psi)^{2} = \norm[\tilde{H}^{k+1}]{\phi}^{2} + \norm[H^{k}]{\psi}^{2}$.

We recall the definition of the radiation fields due to F.G. Friedlander, see \cite{Friedlander:2001} and references to his earlier work cited there, and how to obtain the scattering operator from them.

Given $f \in C^{\infty}_{0}((0,\infty)\times \reals^{3})$ and $\phi , \psi  \in C^{\infty}_{0}(\reals^{3})$, let $u \in C^{\infty}_{0}([0,\infty)\times\reals_{3})$ satisfy
\begin{align}
  \label{CP0}
  (\pd[t]^{2} - \lap )u = f \text{ in } (0,\infty) \times \reals^{3} \\
  u(0,z) = \phi (z), \;\ \pd[t] u(0,z) = \psi (z) . \notag
\end{align}

In what follows we will use the spaces $L^{r}(\reals ; L^{s}(\reals^{3}))$, $1\leq r, s\leq \infty$, with norm given by
\begin{equation*}
  \norm[L^{r};L^{s}]{F} = \norm[L^{r}(\reals)]{\norm[L^{s}(\reals^{3})]{F(t,\cdot)}}.
\end{equation*}

\begin{thm}
  \label{radiation0}
  Let $u$ satisfy   (\ref{CP0}),  with $\phi,\psi\in C_0^\infty(\mr^3)$ and $f\in C_0^\infty(\mr \times \mr^3).$
  Let $x = \frac{1}{|z|}$, $\theta = z / |z|$, and let $s_{+} = t - \frac{1}{x}$.  Then $v_{+}(x,s_{+},\theta) = x^{-1}u(s_{+}+\frac{1}{x}, \frac{1}{x}\theta) \in C^{\infty}([0,\infty)_{x} \times \reals_{s_{+}} \times \sphere^{2})$.  

  The forward radiation field, which is defined by
  \begin{equation*}
    \mathcal{R}_{+}(\phi,\psi,f)(s_{+},\theta) = \pd[s] v_{+}(0,s_{+},\theta),
  \end{equation*}
  exists and satisfies
  \begin{align}
    \label{radf1}
    \norm[L^{2}(\reals\times \sphere^{2})]{\mathcal{R}_{+}(\phi ,
      \psi, f)}&\leq E(\phi,\psi) + \norm[L^{1};L^{2}]{f}.
  \end{align}
\end{thm}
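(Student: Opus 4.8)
The plan is to reduce everything to F.\ G.\ Friedlander's theorem for the \emph{homogeneous} wave equation in $\mr^{3}$ and then to absorb the forcing by Duhamel's principle. Write $U(t)=\sin(t\sqrt{-\lap})/\sqrt{-\lap}$ for the sine propagator. First I would recall from \cite{Friedlander:2001} that if $u^{\mathrm{hom}}$ solves $(\pd[t]^{2}-\lap)u^{\mathrm{hom}}=0$ with Cauchy data $(\phi,\psi)\in C_{0}^{\infty}(\mr^{3})^{2}$, then $v^{\mathrm{hom}}_{+}(x,s,\theta):=x^{-1}u^{\mathrm{hom}}(s+\tfrac1x,\tfrac1x\theta)$ extends to an element of $C^{\infty}([0,\infty)_{x}\times\mr_{s}\times\sphere^{2})$, that $\mathcal{R}_{+}(\phi,\psi,0):=\pd[s]v^{\mathrm{hom}}_{+}(0,\cdot,\cdot)$ is then well defined, and that $(\phi,\psi)\mapsto\mathcal{R}_{+}(\phi,\psi,0)$ is an isometry from $\hone(\mr^{3})\times L^{2}(\mr^{3})$ onto its image, i.e.\ $\norm[L^{2}(\mr\times\sphere^{2})]{\mathcal{R}_{+}(\phi,\psi,0)}=E(\phi,\psi)$. (Smoothness up to $\{x=0\}$ can be extracted directly from Kirchhoff's formula for $u^{\mathrm{hom}}$, and the isometry from the flux of the energy--momentum tensor through null infinity, but I would simply quote Friedlander.)

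Next I would treat the inhomogeneous problem via $u=u^{\mathrm{hom}}+u^{\mathrm{f}}$, $u^{\mathrm{f}}(t,\cdot)=\int_{0}^{t}U(t-\tau)f(\tau,\cdot)\,d\tau$. Fix $\supp f\subset\{0\le t\le T\}\times\{|z|\le\rho\}$; finite speed of propagation then gives $u(t,z)=0$ unless $t\ge|z|-\rho$, so $v_{+}(x,s,\theta)$ vanishes for $s=t-|z|<-\rho$, and near $\{x=0\}$ the endpoint $t=s+\tfrac1x$ exceeds $T$, so there $\int_{0}^{t}$ may be replaced by $\int_{0}^{T}$. For each $\tau$ the map $t\mapsto U(t-\tau)f(\tau,\cdot)$ is the homogeneous solution with Cauchy data $(0,f(\tau,\cdot))$ imposed at time $\tau$, and imposing data at time $\tau$ rather than $0$ merely translates the solution---hence its radiation field---by $\tau$ in $s$; so the rescaling of the forced part satisfies, near $\{x=0\}$,
\begin{equation*}
  v^{\mathrm{f}}_{+}(x,s,\theta) = \int_{0}^{T} v^{\mathrm{hom}}_{+}\!\bigl[(0,f(\tau,\cdot))\bigr](x,\,s-\tau,\,\theta)\,d\tau .
\end{equation*}
This integrand is smooth on $[0,\infty)\times\mr\times\sphere^{2}\times[0,T]$ --- here one uses that Friedlander's construction is uniform over the bounded family $\{f(\tau,\cdot):\tau\in[0,T]\}\subset C_{0}^{\infty}(\{|z|\le\rho\})$ --- so $v_{+}=v^{\mathrm{hom}}_{+}+v^{\mathrm{f}}_{+}\in C^{\infty}([0,\infty)_{x}\times\mr_{s}\times\sphere^{2})$, and differentiating in $s$ and restricting to $x=0$ yields the Duhamel formula
\begin{equation*}
  \mathcal{R}_{+}(\phi,\psi,f)(s,\theta) = \mathcal{R}_{+}(\phi,\psi,0)(s,\theta) + \int_{0}^{T}\mathcal{R}_{+}\!\bigl(0,f(\tau,\cdot),0\bigr)(s-\tau,\theta)\,d\tau .
\end{equation*}

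The estimate \eqref{radf1} then follows by Minkowski's integral inequality, the translation invariance of the $L^{2}(\mr\times\sphere^{2})$ norm in $s$, and the homogeneous isometry applied to the data $(0,f(\tau,\cdot))$ (for which $\norm[L^{2}]{\mathcal{R}_{+}(0,f(\tau,\cdot),0)}=E(0,f(\tau,\cdot))=\norm[L^{2}(\mr^{3})]{f(\tau,\cdot)}$):
\begin{equation*}
  \norm[L^{2}(\mr\times\sphere^{2})]{\mathcal{R}_{+}(\phi,\psi,f)} \le E(\phi,\psi) + \int_{0}^{T}\norm[L^{2}(\mr^{3})]{f(\tau,\cdot)}\,d\tau = E(\phi,\psi) + \norm[L^{1};L^{2}]{f} .
\end{equation*}

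I expect the real content to be the smoothness of $v_{+}$ up to null infinity $\{x=0\}$: if one does not quote Friedlander for it, one must either unwind Kirchhoff's formula and check that the rescaled free solution is a smooth function of $(x,s,\theta)$, or run a commutator/energy argument for the conjugated operator $2\pd[x]\pd[s]+x^{2}\pd[x]^{2}+2x\pd[x]+\lap_{\sphere^{2}}$ near $x=0$ (whose action on $v_{+}$ equals $-x^{-3}f$, supported in $\{x\ge\rho^{-1}\}$ and hence vanishing near $\{x=0\}$), and in either case verify that the resulting regularity and bounds are uniform over the compact range of the Duhamel parameter $\tau$; the rest of the argument is then routine. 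A self-contained alternative for \eqref{radf1} alone is to integrate the identity $\pd[t]e-\diver(\pd[t]u\,\grad u)=\pd[t]u\,f$, with $e=\tfrac12(|\pd[t]u|^{2}+|\grad u|^{2})$, over $\{t\ge0\}$ compactified at null infinity: the boundary flux contributes $\tfrac12\norm[L^{2}]{\mathcal{R}_{+}(\phi,\psi,f)}^{2}$ on $\{x=0\}$ and $\tfrac12 E(\phi,\psi)^{2}$ on $\{t=0\}$, while the source term is controlled by $\sup_{t}\norm[L^{2}]{\pd[t]u(t,\cdot)}\le E(\phi,\psi)+\norm[L^{1};L^{2}]{f}$ (a short Gr\"onwall estimate from the same identity), and the signs combine to give precisely \eqref{radf1}.
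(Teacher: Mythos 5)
Your Duhamel-based argument is correct, but it takes a genuinely different route from the paper's. The paper argues directly: from the differential energy identity one obtains the Gr\"onwall bound $E(t)\le E(\phi,\psi)+\|f\|_{L^1 L^2}$, then restricts the time-$t$ energy to the region $\{t-|z|\le s_0\}$ and passes to the limits $t\to\infty$, $s_0\to\infty$ to recover the $L^2$ norm of $\mathcal{R}_+$ as the flux through null infinity; the smoothness of $v_+$ is obtained by simply citing Friedlander's argument, noting that compactly supported $f$ causes no difficulty. (Your ``self-contained alternative'' at the end of the proposal is, in essence, exactly the paper's proof.) By contrast, your main argument decomposes $u=u^{\mathrm{hom}}+u^{\mathrm{f}}$, expresses $v_+^{\mathrm{f}}$ near $\{x=0\}$ as a superposition of $\tau$-translates of homogeneous radiation fields, and derives \eqref{radf1} from Minkowski's integral inequality, translation invariance of $L^2(\mathbb{R}\times\sphere^2)$ in $s$, and the Friedlander isometry applied to the data $(0,f(\tau,\cdot))$. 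Your route has the advantage of making explicit a Duhamel formula for $\mathcal{R}_+(\phi,\psi,f)$ (which anticipates the paper's own formula \eqref{formula1} and is conceptually transparent) and of reducing the smoothness statement entirely to the homogeneous Friedlander theorem plus uniformity over a compact parameter range; the paper's route is shorter and avoids interchanging limits and integrals, at the cost of not exhibiting the structure of the radiation field explicitly. One small point to make explicit in your argument: the Duhamel representation $v_+^{\mathrm{f}}(x,s,\theta)=\int_0^T v_+^{\mathrm{hom}}[(0,f(\tau,\cdot))](x,s-\tau,\theta)\,d\tau$ only holds on the open set where $s+\tfrac1x>T$; you should combine it with finite speed of propagation (forcing $v_+\equiv0$ for $s<-\rho$) and interior regularity for $x$ bounded away from $0$ to conclude smoothness on all of $[0,\infty)_x\times\mathbb{R}_s\times\sphere^2$, as you implicitly do.
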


\begin{proof}
  Since $f\in C^{\infty}_{0}(\reals\times \reals^{3})$, the proof of
  \cite{Friedlander:2001} works for this case as well.  Since the
  energy of the solution $E(t) = E(u(t), \pd[t]u(t))$ satisfies $E(0)
  = E(\phi , \psi)$, by multiplying the equation (\ref{CP0}) by
  $\pd[t] u$ and integrating in $\reals^{3}$ we obtain
  \begin{align*}
    \frac{1}{2}\pd[t] (E(t))^{2} = \frac{1}{2}\int
    _{\reals^{3}}f(t,z)\overline{\pd[t]u(t,z)}\dz +
    \frac{1}{2}\int_{\reals^{3}}\pd[t]u(t,z)\overline{f}(t,z)\dz \leq
    \int _{\reals^{3}}\left| f(t,z)\pd[t]u(t,z)\right| \dz \leq \\
    \leq \norm[L^{2}(\reals^{3})]{f(t,\cdot)}
    \norm[L^{2}(\reals^{3})]{\pd[t]u(t,\cdot)} \leq
    \norm[L^{2}(\reals^{3})]{f(t,\cdot)}E(t).
  \end{align*}
  Hence $\pd[t]E(t) \leq \norm[L^{2}(\reals^{3})]{f(t,\cdot)}$ and thus
  \begin{equation*}
    E(t) \leq E(0) + \norm[{L^{1}([0,t];L^{2}(\reals^{3}))}]{f}.
  \end{equation*}
  In particular, this implies that for any $s_{0}$,
  \begin{equation*}
    \left[\int_{t-|z|\leq s_{0}}|\pd[t]u(t,z)|^{2}\dz\right]^{1/2} \leq E(0) + \norm[{L^{1}([0,t]; L^{2}(\reals^{3}))}]{f}.
  \end{equation*}
  Setting $v_{+}(x,s,\theta) = x^{-1}u(s+\frac{1}{x}, x, y)$ and
  taking the limit as $t\to \infty$ and then as $s_{0}\to \infty$, we obtain
  \begin{equation*}
    \norm[L^{2}(\reals\times \sphere^{2})]{\mathcal{R}_{+}(\phi , \psi, f)} \leq E(\phi, \psi) + \norm[L^{1}(\reals \times \reals^{3})]{f}.
  \end{equation*}
\end{proof}

By considering the solution in $t < 0$, and setting $v_{-} (x,s_{-} , \theta) = x^{-1}u(s_{-}- \frac{1}{x}, \frac{1}{x}\theta)$, then as above one can show that $v_{-} \in C^{\infty}([0,\infty)_{x}\times \reals_{s_{-}}\times \sphere^{2})$ and
\begin{equation*}
  \mathcal{R}_{-}(\phi, \psi, f) (s_{-}, \theta) = \pd[s]v_{-}(0,s_{-},\theta).
\end{equation*}
One can also show that \eqref{radf1} is satisfied for $\mathcal{R}_{-}$.

By linearity one can extend $\mathcal{R}_{\pm}$ as a continuous map
\begin{equation}
  \label{linradf}
  \mathcal{R}_{\pm} : \hone (\reals^{3}) \times L^{2}(\reals^{3}) \times L^{1}(\reals; L^{2}(\reals^{3})) \to L^{2}(\reals\times \sphere^{2}).
\end{equation}
We also know from \cite{Friedlander:2001} that the maps
\begin{align}
\begin{aligned}
  \mathcal{R}_{\pm}: \hone(\reals^{3}) \times L^{2}(\reals^{3}) \to L^{2}(\reals \times \sphere^{2}), \\
  (\phi, \psi) \mapsto \mathcal{R}_{\pm}(\phi, \psi, 0)
\end{aligned}\label{aux3}
\end{align}
are isometric isomorphisms.  It is worth mentioning that if $\phi$,
$\psi$, and $f$ are radial, the observation that $\mathcal{R}_{+}$
intertwines $\lap_{z}$ with $\pd[s]^{2}$ (which follows from properties of
the Radon transform) implies that
\begin{align*}
  \norm[H^{k}(\reals)]{\mathcal{R}_{+}(\phi, \psi,
    f)} &\leq E_{k}(\phi, \psi) + \norm[L^{1};H^{k}]{f},
\end{align*}  
and that
\begin{equation*}
  \mathcal{R}_{\pm}: \tilde{H}_{\text{rad}}^{k+1}(\reals^{3})\times
  H^{k}_{\text{rad}}(\reals^{3}) \to H^{k}\left( \reals\right)
\end{equation*}
are isometric isomorphisms.  

We will need the following:
\begin{prop}
  \label{control}
  Given $f\in L^{1}(\reals; L^{2}(\reals^{3}))$, there exist unique $(\phi , \psi) \in \hone(\reals^{3}) \times L^{2}(\reals^{3})$ such that
  \begin{equation*}
    \mathcal{R}_{+}(\phi , \psi , f) = 0,
  \end{equation*}
  and in this case
  \begin{equation*}
    E(\phi , \psi) \leq \norm[L^{1}(\reals ; L^{2}(\reals^{3}))]{f}.
  \end{equation*}

  Moreover, if $f\in L^{1}(\reals ; H^{k}(\reals^{3}))$ is radial,
  then there are unique $(\phi, \psi)\in
  \tilde{H}^{k+1}_{\text{rad}}(\reals^{3}) \times
  H^{k}_{\text{rad}}(\reals^{3})$ so that $\mathcal{R}_{+}(\phi, \psi, f)=0$.
\end{prop}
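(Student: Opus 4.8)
The plan is to deduce the proposition from three facts already in hand: the linearity of $\mathcal{R}_{+}$ in the triple $(\phi,\psi,f)$, the a priori bound \eqref{radf1} of Theorem~\ref{radiation0} (together with its continuous extension \eqref{linradf}), and Friedlander's statement \eqref{aux3} that $(\phi,\psi)\mapsto\mathcal{R}_{+}(\phi,\psi,0)$ is an isometric isomorphism $\hone(\reals^{3})\times L^{2}(\reals^{3})\to L^{2}(\reals\times\sphere^{2})$. First, for $f\in L^{1}(\reals;L^{2}(\reals^{3}))$ I would set $g=\mathcal{R}_{+}(0,0,f)\in L^{2}(\reals\times\sphere^{2})$, which makes sense by the continuous extension \eqref{linradf}; applying \eqref{radf1} with $\phi=\psi=0$ (first for $f\in C_{0}^{\infty}$, then by density, using that $E(0,0)=0$) gives $\norm[L^{2}(\reals\times\sphere^{2})]{g}\leq\norm[L^{1};L^{2}]{f}$. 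By \eqref{aux3} there is a unique pair $(\phi,\psi)\in\hone(\reals^{3})\times L^{2}(\reals^{3})$ with $\mathcal{R}_{+}(\phi,\psi,0)=-g$, and it satisfies $E(\phi,\psi)=\norm[L^{2}(\reals\times\sphere^{2})]{g}\leq\norm[L^{1};L^{2}]{f}$. By linearity $\mathcal{R}_{+}(\phi,\psi,f)=\mathcal{R}_{+}(\phi,\psi,0)+g=0$; conversely, any pair annihilating $f$ must satisfy $\mathcal{R}_{+}(\cdot,\cdot,0)=-g$, so by injectivity it is this one. This yields existence, uniqueness, and the energy bound.

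For the radial statement I would run the same argument one regularity level at a time. If $f\in L^{1}(\reals;H^{k}(\reals^{3}))$ is radial, then the solution of \eqref{CP0} with zero data and source $f$ is radial, so $g=\mathcal{R}_{+}(0,0,f)$ depends only on $s$; the higher-order radial estimate recorded just after \eqref{aux3}, applied with $\phi=\psi=0$, shows $g\in H^{k}(\reals)$ with $\norm[H^{k}(\reals)]{g}\leq\norm[L^{1};H^{k}]{f}$. Using that $\mathcal{R}_{+}\colon\tilde{H}^{k+1}_{\text{rad}}(\reals^{3})\times H^{k}_{\text{rad}}(\reals^{3})\to H^{k}(\reals)$ is an isometric isomorphism, there is a unique radial pair $(\phi,\psi)$ in these spaces with $\mathcal{R}_{+}(\phi,\psi,0)=-g$, hence $\mathcal{R}_{+}(\phi,\psi,f)=0$. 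Finally, since the relevant inclusions of data spaces and of target spaces intertwine with $\mathcal{R}_{+}$, this radial pair coincides with the one produced by the $L^{2}$ argument (as $L^{1}(\reals;H^{k})\subset L^{1}(\reals;L^{2})$), so the two uniqueness assertions are consistent.

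The only steps needing care are routine: the density argument promoting \eqref{radf1} and the identity $\mathcal{R}_{+}(\phi,\psi,f)=\mathcal{R}_{+}(\phi,\psi,0)+\mathcal{R}_{+}(0,0,f)$ from $C_{0}^{\infty}$ data to the Banach-space completions, and the compatibility check between the radial and nonradial solutions. I do not expect a genuine obstacle: the entire content is packaged in Theorem~\ref{radiation0}, the linearity of the radiation field, and Friedlander's isometry, and the proposition is essentially a restatement of these in the form needed to handle the inhomogeneous term arising from the nonlinearity. The one point worth emphasizing in the write-up is that the bound uses $\norm[L^{1};L^{2}]{f}$ rather than the full $E(\phi,\psi)+\norm[L^{1};L^{2}]{f}$, which is precisely what makes the constant in the conclusion equal to $1$.
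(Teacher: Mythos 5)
Your proposal is correct and takes essentially the same route as the paper's proof: by linearity choose $(\phi,\psi)$ with $\mathcal{R}_{+}(\phi,\psi,0)=-\mathcal{R}_{+}(0,0,f)$, apply the bound \eqref{radf1} with $E(0,0)=0$, and invoke Friedlander's isometry \eqref{aux3} for both existence/uniqueness and the energy estimate. In the radial part you appeal directly to the radial isometric isomorphism recorded after \eqref{aux3}, while the paper instead re-derives the radiality of the specific pair $(\phi,\psi)$ via an orthogonal-transformation/uniqueness argument before applying the radial higher-order estimate --- a cosmetic difference only, since the paper has already stated that radial isomorphism.
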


\begin{proof}
  To see this, one just needs to pick $(\phi, \psi)$ such that $\mathcal{R}_{+}(\phi, \psi, 0) = -\mathcal{R}_{+}(0,0,f)$.  But in this case, in view of Theorem~\ref{radiation0},
  \begin{equation*}
    E(\phi, \psi) = \norm[L^{2}]{\mathcal{R}_{+}(\phi, \psi, 0)} = \norm[L^{2}]{\mathcal{R}_{+}(0,0,f)}\leq \norm[L^{1}(\reals;L^{2}(\reals^{3}))]{f}.
  \end{equation*}

  To prove the second statement, we now assume that $f\in L^{1}H^{k}$
  is radial and so $F = -\mathcal{R}_{+}(0,0,f)$ is radial as well.
  Let $\phi$ and $\psi$ be such that $\mathcal{R}_{+}(\phi, \psi, 0) =
  F$.  Let $U$ be an orthogonal transformation, and let
  $U^*\phi=\phi\circ U,$ $U^*\psi=\psi\circ U.$ Since $U^*F=F\circ
  U=F,$ and the wave equation is invariant under orthogonal
  transformations, it follows that $F=\mcr_+(\phi,\psi,0)=\mcr_+(U^*
  \phi, U^* \psi,0).$ By uniqueness, $\phi=U^*\phi$ and $\psi=U^*
  \psi.$ We now use that for radial $\phi, \psi$, the fact that the
  radiation field intertwines $\lap_{z}$ with $\pd[s]^{2}$ implies
  that
  \begin{equation*}
    E_{k}(\phi, \psi) = \norm[H^{k}]{\mathcal{R}_{+}(\phi, \psi , 0)}
    = \norm[H^{k}]{\mathcal{R}_{+}(0,0,f)} \leq \norm[L^{1}(\reals ; H^{k}(\reals^{3}))]{f}.
  \end{equation*}
\end{proof}

It is useful to find a formula for $\mathcal{R}_{\pm}(\phi,\psi, f)$.
Since we are only interested in either the behavior of the solution
for positive or negative times, we multiply $u$ by the Heaviside
function in time.  If $u_+ = H(t)u$, and $F_{+}(t,z) = H(t)f(t,z)$, we
obtain
\begin{align*}
  (\pd[t]^{2} - \lap) u_+ &= \psi (z) \delta (t) + \phi(z) \delta ' (t) + F_+(t,z), \\
  u_{+} &= 0 \text{ for } t< 0.
\end{align*}
Taking the Fourier transform in $t$, we obtain
\begin{equation*}
  (\Delta+\lambda^{2} )\mathcal{F}(u_+) =  -\psi (z) - i \lambda \phi(z) - \mathcal{F}(F_{+})(\lambda, z),
\end{equation*}
where $\mathcal{F}$ denotes the Fourier transform in $t$.  Let $R_{-}(\lambda) = (\Delta+\la^2)^{-1}$ denote the resolvent of the Laplacian in $\reals^{3}$ which is holomorphic in $\im \lambda < 0$.  In this case it is well known that the kernel of $R_{-}(\lambda)$ is given by
\begin{equation}
  \label{resform}
  R_{-}(\lambda) (z,w) = \frac{1}{4\pi}\frac{e^{-i\lambda|z-w|}}{|z-w|}.
\end{equation}

We obtain
\begin{equation*}
  \mathcal{F}(u_+)(\lambda, z) = -R_{-}(\lambda) (\psi(z) + i\lambda \phi (z) + \mathcal{F}(F_{+})(\lambda, z)).
\end{equation*}
The Fourier transform in $s$ of the forward radiation field is given by
\begin{equation}
  \label{ftradf}
  \mathcal{F}(\mathcal{R}_{+}(\phi, \psi, f))(\lambda, \theta) = -\lim _{x\to 0} x^{-1}e^{i\lambda / x}R_{-}(\lambda)(i\lambda\psi (z) -  \lambda^{2} \phi(z) + i \lambda \mathcal{F}(F_{+})(\lambda,z)).
\end{equation}

But we deduce from (\ref{resform}) that if $x = \frac{1}{|z|}$, and $\theta = z / |z|$,
\begin{equation*}
  \lim _{x\to 0} x^{-1}e^{i\lambda/x}(R_{-}(\lambda)g)(\theta) = \frac{1}{4\pi} \int _{\reals^{3}}e^{i\lambda\langle \theta, w\rangle}g(w)\dw .
\end{equation*}
Applying this to (\ref{ftradf}), we obtain
\begin{equation*}
  \mathcal{F}(\mathcal{R}_{+}(\phi, \psi, f))(\lambda, \theta) = -\frac{i\la}{4\pi}\int _{\reals^{3}} e^{\la \langle \theta, z\rangle}\left( \psi (z) +i \lambda  \phi(z) +  \mathcal{F}(F_{+})(\lambda, z)\right)\dz.
\end{equation*}

Notice that
\begin{equation*}
  \int _{\reals^{3}}e^{i\lambda\langle \theta, w\rangle}\mathcal{F}(F_{+})(\lambda, w)\dw = \widehat{F_{+}}(\lambda , -\lambda \theta),
\end{equation*}
where $\widehat{F_{+}}$ is the Fourier transform of $F_{+}(t,w)$ in
both variables.  Therefore we obtain
\begin{equation}
  \label{formradf}
  \mathcal{F}(\mathcal{R}_{+}(\phi, \psi, f))(\lambda, \theta) = -\frac{i\la}{4\pi}\left(  \hat{\psi}(-\lambda\theta) +i \lambda \hat{\phi}(-\lambda\theta) + \widehat{F_{+}}(\lambda, -\lambda\theta)\right).
\end{equation}
As we remarked in the introduction, if $\phi$ and $f$ both vanish identically, then
\begin{equation*}
\mcr_+(0,\psi,0)=-\frac{i\la}{4\pi} \hat{\psi}(-\la\theta),
\end{equation*}
which is the essentially the Fourier transform of the function $\psi.$
In what follows, this construction will take into account the effect
of the nonlinear potential and can be viewed as a distorted form of
the Fourier transform of $\psi.$ In fact this is the name given to
this transformation for linear potential or metric perturbations of
the wave equation.  One should remark here that
$\whf_+(\la,-\la\theta)$ is well defined.
\begin{prop}\label{Fhat} If $F\in L^1(\mr; L^2(\mr^3)),$ then 
\begin{equation}
||\la\whf(\la,-\la\theta)||_{L^2(\mr \times \ms^2)} \leq ||F||_{L^1;L^2}.\label{boundfhat}
\end{equation}
\end{prop}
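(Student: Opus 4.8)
The plan is to bound the cone restriction $\whf(\la,-\la\theta)$ by extracting the time integration with Minkowski's integral inequality and then identifying what remains with Plancherel's theorem on $\mr^{3}$ written in polar coordinates. Write $\widehat{g}$ for the spatial Fourier transform of $g$ on $\mr^{3}$, so that (with the conventions used above)
\begin{equation*}
  \la\,\whf(\la,-\la\theta) = \int_{\mr}\la\, e^{-i\la t}\,\widehat{F(t,\cdot)}(-\la\theta)\,\dt .
\end{equation*}
Since $F\in L^{1}(\mr;L^{2}(\mr^{3}))$, the function $t\mapsto \norm[L^{2}(\mr^{3})]{F(t,\cdot)}$ is integrable, so the integrand is, for almost every $t$, a bona fide element of $L^{2}(\mr\times\ms^{2})$ (this is the content of the assertion that $\whf(\la,-\la\theta)$ is well defined). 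Minkowski's integral inequality then gives
\begin{equation*}
  \norm[L^{2}(\mr\times\ms^{2})]{\la\,\whf(\la,-\la\theta)} \le \int_{\mr}\norm[L^{2}(\mr\times\ms^{2})]{\la\,\widehat{F(t,\cdot)}(-\la\theta)}\,\dt ,
\end{equation*}
so everything reduces to the pointwise-in-$t$ estimate $\norm[L^{2}(\mr\times\ms^{2})]{\la\, h(-\la\theta)} \lesssim \norm[L^{2}(\mr^{3})]{h}$ applied to $h=\widehat{F(t,\cdot)}$.

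For this pointwise estimate I would note that $(\la,\theta)\mapsto -\la\theta$ is a two-to-one covering of $\mr^{3}\setminus\{0\}$ — the ranges $\la>0$ and $\la<0$ each sweep out all of $\mr^{3}$ — under which $\la^{2}\dlam\dsigma(\theta)$ pushes forward to twice Lebesgue measure, by the polar-coordinate identity $d\xi = r^{2}\, dr\, d\sigma(\omega)$ with $r=|\la|$. Hence $\norm[L^{2}(\mr\times\ms^{2})]{\la\, h(-\la\theta)} = \sqrt{2}\,\norm[L^{2}(\mr^{3})]{h}$ for every $h\in L^{2}(\mr^{3})$, and Plancherel's theorem on $\mr^{3}$ converts $\norm[L^{2}(\mr^{3})]{\widehat{F(t,\cdot)}}$ back into a constant multiple of $\norm[L^{2}(\mr^{3})]{F(t,\cdot)}$. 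Combining this with the Minkowski step and recalling that $\norm[L^{1};L^{2}]{F}=\int_{\mr}\norm[L^{2}(\mr^{3})]{F(t,\cdot)}\,\dt$ yields a bound of the form $\norm[L^{2}(\mr\times\ms^{2})]{\la\,\whf(\la,-\la\theta)}\le C\norm[L^{1};L^{2}]{F}$; matching the numerical constant in \eqref{boundfhat} is then only a matter of the normalization of the Fourier transform in force.

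The only genuinely delicate point is making sense of the restriction $\whf(\la,-\la\theta)$ for merely $L^{1}L^{2}$ data, where $\whf$ is a priori just an $L^{2}$ (or tempered-distributional) object on $\mr^{4}$ with no trace on the cone; the Minkowski argument sidesteps this by exhibiting $\whf(\la,-\la\theta)$ directly as a convergent $L^{2}(\mr\times\ms^{2})$-valued integral. Alternatively, one may split $F = H(t)F + H(-t)F$, apply the energy estimate of Theorem~\ref{radiation0} to $\mcr_{\pm}(0,0,H(\pm t)F)$ (using the continuous extension~\eqref{linradf} and density of $C_{0}^{\infty}$), and read off \eqref{boundfhat} from the formula~\eqref{formradf} together with Plancherel in $\la$; this second route has the advantage of simultaneously identifying $\whf(\la,-\la\theta)$, up to an explicit factor, with the Fourier transform of the linear radiation field $\mcr_{+}(0,0,F)$.
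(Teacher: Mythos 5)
Your argument is correct, and it lands on the same two ingredients as the paper's proof: the pointwise bound on the time-frequency side (modulus $1$ for you, $\|\widehat{g}\|_{L^\infty}\le\|g\|_{L^1}$ for them) and the polar-coordinates/Plancherel identity
\begin{equation*}
  \int_{\mr}\int_{\ms^2}|h(-\la\theta)|^2\,\la^2\,\dlam\,d\theta = 2\int_{\mr^3}|h(\xi)|^2\,d\xi .
\end{equation*}
Where you differ is in how the time variable is isolated. The paper reduces by density to simple tensors $F(t,z)=g(t)f(z)$, so that $\whf(\la,-\la\theta)=\widehat{g}(\la)\widehat{f}(-\la\theta)$ factors, and then bounds the two factors separately. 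You avoid the density step entirely by treating $\la\whf(\la,-\la\theta)$ as an $L^2(\mr\times\ms^2)$-valued Bochner integral in $t$ and applying Minkowski's integral inequality, then invoking the same polar/Plancherel computation for each fixed $t$. This is a genuine (if modest) simplification: it makes the well-definedness of the cone restriction $\whf(\la,-\la\theta)$ manifest as part of the estimate, rather than a separate matter to be swept under a density argument, and it handles general $F\in L^1(\mr;L^2(\mr^3))$ in one stroke. Your alternative route via the energy estimate in Theorem~\ref{radiation0} and formula~\eqref{formradf} is also valid and has the conceptual merit you note, though you don't flesh it out.

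One small point on constants: both your proof and the paper's actually produce a bound of the form $C\|F\|_{L^1;L^2}$ (the paper's last display literally reads $\le C\|g\|_{L^1}^2\|f\|_{L^2}^2$), whereas the proposition is stated with constant $1$. You flag this correctly as a normalization matter; it is not a gap in your argument. (You might also note that the paper's first display in the proof carries a spurious extra factor of $\la^2$ under the integral; the computation is only consistent with the single $\la^2$ Jacobian that both you and their final line use.)
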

\begin{proof}  By density we only need to consider $F(t,z)=g(t) f(z),$ with $g\in L^1(\mr)$ and $f\in L^2(\mr^3).$ Then
\begin{equation}
\whf(\la,-\la\theta)= \widehat{g}(\la) \widehat{f}(-\la\theta).
\end{equation}
Therefore
\begin{equation}
||\la\whf(\la,-\la\theta)||_{L^2(\mr \times \ms^2)}^2= \int_{\mr} \int_{\ms^2} \la^2 |\widehat{g}(\la)|^2 |\widehat{f} (-\la\theta)|^2 \la^2 \; d\la d\theta.
\end{equation}
Since $g\in L^1(\mr),$ $\widehat{g}\in L^\infty(\mr)$ and $||\widehat{g}||_{L^\infty} \leq ||g||_{L^1}.$  Then, by Plancherel's Theorem we obtain
\begin{equation}
||\la\whf(\la,-\la\theta)||_{L^2(\mr \times \ms^2)}^2 \leq ||g||_{L^1(\mr)}^2 \int_{\mr} \int_{\ms^2}  |\widehat{f} (-\la\theta)|^2 \la^2 \; d\la d\theta \leq C ||g||_{L^1}^2|| f||_{L^2}^2.
\end{equation}
\end{proof}
To compute the backward radiation field, we replace $H(t)$ by $H(-t),$ set $u_-=H(-t) u$ and $F_-(t,z)=H(-t) F(t,z).$ Then
\begin{align*}
  (\pd[t]^{2} - \lap) u_- = -\psi (z) \delta (t) - \phi(z) \delta ' (t) + F_-(t,z), \\
  \tilde{u} = 0 \text{ for } t>0.
\end{align*}

define $F_{-}(t,z) = H(-t)f(t,z)$, and the replace the backward resolvent by the forward one,
\begin{equation*}
  R_{+}(\lambda) = \frac{1}{4\pi}\frac{e^{i\lambda|z-w|}}{|z-w|}, 
\end{equation*}
so we obtain
\begin{equation*}
  \mathcal{F}(\mathcal{R}_{-}(\phi, \psi, f))(\lambda, \theta) =
  \frac{i\la}{4\pi}\int _{\reals^{3}}
  e^{-i\lambda\langle\theta,w\rangle} \left( \psi (w) + i \lambda \phi
    (w) - \mathcal{F}(F_{-})(\lambda, w)\right) \dw,
\end{equation*}
and hence
\begin{equation}
  \label{formradf1}
  \mathcal{F}(\mathcal{R}_{-}(\phi, \psi, f))(\lambda, \theta) =
  \frac{i\la}{4\pi}\left(  \hat{\psi}(\lambda\theta) + i\lambda
    \hat{\phi}(\lambda\theta) - \widehat{F_{-}}(\lambda,
    \lambda\theta) \right). 
\end{equation}

We also want to express the radiation fields in terms of the Radon transform of the functions involved.  We recall that the Radon transform of a function $\phi (z)$ is defined by
\begin{equation*}
  R\phi(s,\theta) = \int _{\langle z, \theta\rangle = s} \phi (z) \dsigma (z), 
\end{equation*}
where $\dsigma (z)$ is the surface measure on the plane $\langle z, \theta\rangle = s$.  From this we deduce that the Fourier transform in the $s$ variable of $R(f)(s,\omega)$ is given by
\begin{equation*}
  \mathcal{F}(R\phi)(\lambda,\theta) = \int _{\reals^{3}}e^{-i\lambda\langle z,\theta\rangle}\phi(z)\dz = \hat{\phi}(\lambda\theta).
\end{equation*}
Notice that
\begin{equation}
  \int _{\reals}e^{i\lambda s}\widehat{F_{+}}(\lambda,-\lambda\omega)\dlam = \int _{\reals\times\reals\times \reals^{3}}e^{i\lambda(s-t+\langle\omega, z\omega\ran)}F_{+}(t,z) \dlam\dt\dz = \int _{t - \langle\omega,z\rangle = s}F_{+}(t,z)\dsigma (t,z). \label{invft}
\end{equation}
Now taking the inverse Fourier transform in $\lambda$ of (\ref{formradf})
and (\ref{formradf1}), we obtain
\begin{align}
  \begin{aligned}
    \mathcal{R}_{+}(\phi, \psi, f)(s,\theta) = -\frac{1}{4\pi}\p_s\left(
      R\psi (s,-\theta) + \pd[s] R\phi (s,-\theta) + \int _{t -
        \langle\theta, z\rangle = s} F_+(t,z)\dsigma (t,z)\right), \\
    \mathcal{R}_{-}(\phi, \psi, f)(s,\theta) =
    \frac{1}{4\pi}\pd[s]\left(R\psi(s,\theta) + \pd[s] R\phi
      (s,\theta)-\int _{t + \langle \theta, z\rangle = s}F_-(t,z)\dsigma
      (t,z)\right).
  \end{aligned}    \label{formula1}
\end{align}

\section{The radiation fields  for the semilinear wave equation}
\label{sec:semilinear-case}

Now we consider solutions to \eqref{CP} with finite energy Cauchy data. We recall the following result of Shatah and Struwe.

\begin{thm} \label{wellposed}[Shatah and Struwe \cite{Shatah:1993}]  
  For any $(\phi, \psi) \in \hone(\reals^{3}) \times L^{2}(\reals^{3})$, there exists a unique global (in time) solution to the problem~(\ref{CP}) such that $u$ is in the space
  \begin{equation*}
    X_{\loc} = C^{0}(\reals ; \hone (\reals^{3})) \cap C^{1}(\reals ; L^{2}(\reals^{3})) \cap L^{5}_{\loc}(\reals ; L^{10}(\reals^{3})).
  \end{equation*}
\end{thm}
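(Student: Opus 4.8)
The plan is to combine a critical Strichartz estimate with the conserved energy and a non-concentration-of-energy argument, following Struwe, Grillakis and Shatah--Struwe. \emph{Step 1: the critical estimate.} I would first record the Strichartz estimate for the inhomogeneous linear equation $(\pd[t]^{2}-\lap)w=F$ with $(w,\pd[t]w)|_{t=0}=(\phi,\psi)$: on any time interval $I\ni 0$,
\[
  \norm[L^{5}(I;L^{10}(\reals^{3}))]{w}+\sup_{t\in I}E(w(t),\pd[t]w(t))\lesssim E(\phi,\psi)+\norm[L^{1}(I;L^{2}(\reals^{3}))]{F}.
\]
The pair $(5,10)$ is wave-admissible in spatial dimension $3$ and is precisely the scaling-critical one: hypothesis (A4) gives $\norm[L^{1}(I;L^{2})]{f(u)}\leq c_{2}\norm[L^{5}(I;L^{10})]{u}^{5}$, the two sides transforming identically under the critical rescaling $u\mapsto\lambda^{1/2}u(\lambda t,\lambda z)$. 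This is why $L^{5}_{\loc}L^{10}$ is the natural space in which to solve~(\ref{CP}).

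\emph{Step 2: local theory.} On a short interval $I=[t_{0},t_{0}+\delta]$ I would solve~(\ref{CP}) by a contraction-mapping argument for the Duhamel map $u\mapsto w_{0}-\int_{t_{0}}^{t}\frac{\sin((t-s)\sqrt{-\lap})}{\sqrt{-\lap}}f(u(s))\,ds$, with $w_{0}$ the free evolution of the data at $t_{0}$, in a small ball of $L^{5}(I;L^{10})$. Since the data has finite energy, $w_{0}\in L^{5}_{\loc}(\reals;L^{10}(\reals^{3}))$, so $\norm[L^{5}(I;L^{10})]{w_{0}}$ can be made small by shrinking $\delta$; then (A3)--(A4) control $f$ and its difference quotients by fourth powers of the $L^{5}L^{10}$ norm, so the map is a contraction. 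This produces a unique solution in $X_{\loc}$ on a maximal interval of existence $(T_{-},T_{+})$, with the blow-up alternative: if $T_{+}<\infty$ then $\norm[L^{5}([t_{0},T_{+});L^{10})]{u}=\infty$. Uniqueness in $X_{\loc}$ follows from the same difference estimate together with a Gronwall argument.

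\emph{Step 3: a priori bound and non-concentration.} By (A1)--(A2) one has $P(u)\geq 0$, so the conserved energy dominates the energy norm, $\tfrac12 E(u(t),\pd[t]u(t))^{2}\leq E(t)=E(0)$, and $(u(t),\pd[t]u(t))$ stays bounded in $\hone\times L^{2}$ on the interval of existence. It remains to convert this into a bound on $\norm[L^{5}([t_{0},T_{+});L^{10})]{u}$. Suppose $T_{+}<\infty$; by finite speed of propagation, blow-up occurs at some vertex $(T_{+},x_{0})$, and it suffices to show that the energy of $u$ inside the backward truncated light cone with that vertex tends to $0$ as $t\uparrow T_{+}$. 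I would do this in two moves: an energy-flux identity (multiply the equation by $\pd[t]u$ and integrate over a truncated cone) shows the local energy is monotone with limit $\ell\geq 0$ and that the flux through the mantle tends to $0$, using $P\geq 0$; then a virial/Morawetz identity together with rescaling $u$ at the critical scaling near the vertex identifies any nonzero weak limit with a finite-energy static solution of $\lap U=f(U)$, which a Pohozaev/Rellich identity---here (A4)--(A5) enter---forces to vanish, so $\ell=0$. With $\ell=0$, the small-energy version of the local theory, run at the critical scaling, gives a lower bound on the length of the interval of existence near $(T_{+},x_{0})$ that is uniform as $t\uparrow T_{+}$, contradicting maximality; the same argument at $T_{-}$ yields global existence.

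\emph{Main obstacle.} Steps 1 and 2 are routine once the Strichartz estimate is in hand, and the hypotheses (A1)--(A2) and (A5) are tailored so that the energy bound in Step 3 is immediate; the whole difficulty sits in the non-concentration argument of Step 3, which is the content of the cited work and the reason for imposing (A3)--(A5). Accordingly, in practice one simply invokes \cite{Grillakis:1990,Shatah:1993,Shatah:1994}, observing that although those arguments are written for $f(u)=|u|^{4}u$ they use only the structural properties (A1)--(A5) of the nonlinearity.
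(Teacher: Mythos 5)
The paper does not prove Theorem~\ref{wellposed}: the bracketed attribution to Shatah--Struwe \cite{Shatah:1993} signals that it is being recalled as a known result (see also the footnote in the introduction noting that the arguments in the cited literature, though written for $f(u)=|u|^{4}u$, go through under (A1)--(A5)), and it is used as a black box throughout. Your outline is a faithful high-level summary of the actual proof in that literature --- the critical $(5,10)$ Strichartz estimate and its compatibility with (A4), the local contraction argument in a small ball of $L^{5}L^{10}$, the a priori energy bound from conservation and $P\geq 0$, and, as the genuinely hard step, the non-concentration/rigidity argument near a putative blow-up vertex, where (A3)--(A5) feed into the Pohozaev-type rigidity --- and you correctly flag at the end that in practice one simply invokes \cite{Grillakis:1990,Shatah:1993,Shatah:1994}. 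So there is nothing to fault in the proposal, but note that for the purposes of this paper the theorem is an input, not something to be re-proved; the substantive content of the manuscript begins with the definition and analysis of the nonlinear radiation fields, not with well-posedness.
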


Bahouri and G{\'e}rard \cite{Bahouri:1999} showed that the solution $u\in X_{\loc}$ to (\ref{CP}) in fact satisfies
\begin{equation}
  \label{bg1}
  u \in X = C^{0}(\reals ; \hone (\reals^{3})) \cap C^{1}(\reals ; L^{2}(\reals^{3})) \cap L^{5}(\reals ; L^{10}(\reals^{3})).
\end{equation}

The conserved energy is given by
\begin{equation}
\mce(\phi,\psi)= \frac{1}{2}\int_{\mr^3} \left( |\nabla_z
  \phi|^2  + |\psi|^2\right)\;\dz + \int_{\reals^{3}} P(\phi) \;\ dz. \label{nonlinear-energy-norm}
\end{equation}

Since $u\in X$, it follows that $|u|^{4}u \in L^{1}(\reals ;
L^{2}(\reals^{3}))$ and so $f(u)$ lies in the same space.  In view of
(\ref{linradf}), one can define the semilinear radiation fields as the
maps
\begin{align}
  \label{slradf}
  \mathcal{L}_{\pm} : \hone (\reals^{3}) \times L^{2}(\reals^{3}) \to L^{2}(\reals \times \sphere^{2}) \\
  \mathcal{L}_{\pm} (\phi, \psi) = \mathcal{R}_{\pm}(\phi , \psi, -f(u)). \notag
\end{align}
We deduce from (\ref{formula1}) that
\begin{gather}
  \begin{gathered}
    \mathcal{L}_{+}(\phi , \psi )(s,\theta) =
    -\frac{1}{4\pi}\pd[s]\left(R\psi (s,-\theta) + \pd[s]R\phi
      (s,-\theta) +\int_{t - \langle \theta, z\rangle = s}H(t) f(u(t,z))\dsigma (t,z)\right), \\
    \mathcal{L}_{-}(\phi , \psi )(s,\theta) =
    \frac{1}{4\pi}\pd[s]\left(R\psi (s,\theta) + \pd[s]R\phi
      (s,\theta)-\int_{t + \langle \theta, z\rangle = s}H(-t)
      f(u(t,z))\dsigma (t,z)\right),
  \end{gathered}\label{nonlinear-radf}
\end{gather}

Grillakis \cite{Grillakis:1990}  showed that the if the initial data is $C^\infty$ and has compact support, then the Friedlander radiation fields of the semilinear wave equation are in fact given by \eqref{nonlinear-radf}.  Following an idea of Bahouri and G{\'e}rard~\cite{Bahouri:1999} we prove

\begin{thm}\label{L2bound-radf}  The maps $\mcl_{\pm}(\phi,\psi)$ defined by \eqref{nonlinear-radf} are isomorphisms and
\begin{align}
\begin{aligned}
\mce(\phi,\psi)=|| \mcl_{\pm}(\phi,\psi) ||_{L^2(\mr\times \ms^2)}^2 
\end{aligned} \label{mcl-bound}
\end{align}
where $\mce$ is the nonlinear energy defined in
\eqref{nonlinear-energy-norm}. Moreover, they are translation
representations of the semilinear wave group.
\end{thm}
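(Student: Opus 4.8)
The plan is to factor $\mcl_\pm$ through Friedlander's linear radiation field and the nonlinear scattering map of Bahouri and G\'erard. Write $S_0(t)$ for the free wave propagator on Cauchy data and $W(t)\colon(\phi,\psi)\mapsto(u(t),\pd[t]u(t))$ for the solution group of \eqref{CP} on $\hone(\reals^3)\times L^2(\reals^3)$, which is well defined by Theorem \ref{wellposed}. Since $u\in X$ we have $f(u)\in L^1(\reals;L^2(\reals^3))$, so Duhamel's formula shows that the limits
\[ (\phi_\pm,\psi_\pm)=\lim_{t\to\pm\infty}S_0(-t)(u(t),\pd[t]u(t))=(\phi,\psi)-\int_0^{\pm\infty}S_0(-\tau)(0,f(u(\tau)))\,d\tau \]
exist in $\hone\times L^2$, the difference of two such truncated integrals being bounded by $\norm[L^1;L^2]{f(u)}$. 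These are the scattering data of $u$, and the statement that the maps $S_\pm\colon(\phi,\psi)\mapsto(\phi_\pm,\psi_\pm)$ are homeomorphisms of $\hone\times L^2$ is precisely the existence of the wave operators and asymptotic completeness established in \cite{Bahouri:1999}.

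The crucial step is the identity
\[ \mcl_\pm(\phi,\psi)=\mcr_\pm(\phi_\pm,\psi_\pm,0), \]
which I would establish on the Fourier side from the explicit formula \eqref{formradf}. With forcing $-f(u)$, so that $F_+=-H(t)f(u)$, \eqref{formradf} gives $\mcf(\mcl_+(\phi,\psi))(\la,\theta)=-\tfrac{i\la}{4\pi}\big(\hat\psi(-\la\theta)+i\la\hat\phi(-\la\theta)+\whf_+(\la,-\la\theta)\big)$, with $\whf_+(\la,-\la\theta)$ a genuine $L^2$ function by Proposition \ref{Fhat}. On the other hand, taking the spatial Fourier transform of the Duhamel expression for $(\phi_+,\psi_+)$ and using that the two components of $S_0(-\tau)(0,\rho)$ act by $-|\xi|^{-1}\sin(\tau|\xi|)$ and $\cos(\tau|\xi|)$ on $\hat\rho$, one obtains, since $\cos(\tau\la)-i\sin(\tau\la)=e^{-i\tau\la}$,
\[ \hat\psi_+(-\la\theta)+i\la\hat\phi_+(-\la\theta)=\hat\psi(-\la\theta)+i\la\hat\phi(-\la\theta)-\int_0^\infty e^{-i\tau\la}\,\widehat{f(u(\tau))}(-\la\theta)\,d\tau, \]
and the last integral is exactly $\whf_+(\la,-\la\theta)$. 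Comparing with \eqref{formradf} applied to $(\phi_+,\psi_+)$ with zero forcing yields $\mcf(\mcr_+(\phi_+,\psi_+,0))=\mcf(\mcl_+(\phi,\psi))$; the backward case is identical using \eqref{formradf1}. The interchange of the $\tau$-integral with the Fourier transforms is justified by the $L^1(\reals;L^2)$ bound on $f(u)$ and \eqref{boundfhat}; one first checks the identity for $(\phi,\psi)\in C_0^\infty$, where the solution is smooth and classical Duhamel applies, and then passes to the limit using \eqref{radf1}.

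Granting this, the three assertions follow. The map $\mcl_\pm=\mcr_\pm(\cdot,\cdot,0)\circ S_\pm$ is a composition of homeomorphisms of $\hone\times L^2$ onto $L^2(\reals\times\sphere^2)$ by \eqref{aux3} and \cite{Bahouri:1999} (continuity of $\mcl_\pm$ also follows directly from \eqref{radf1} and continuous dependence of $u$ on the data in $X$), hence an isomorphism. For the energy identity, \eqref{aux3} gives $\norm[L^2(\reals\times\sphere^2)]{\mcl_\pm(\phi,\psi)}^2=\int_{\reals^3}\big(|\grad\phi_\pm|^2+|\psi_\pm|^2\big)\,\dz$; since the free energy of the solution $u_\pm$ with data $(\phi_\pm,\psi_\pm)$ is conserved and $u(t)-u_\pm(t)\to0$ in the energy norm as $t\to\pm\infty$, this equals $\lim_{t\to\pm\infty}\int_{\reals^3}\big(|\grad u(t)|^2+|\pd[t]u(t)|^2\big)\,\dz$, and conservation of the nonlinear energy \eqref{nonlinear-energy-norm} together with $\int_{\reals^3}P(u(t))\,\dz\to0$ (because $u$ scatters, so $\norm[L^6]{u(t)}\lesssim\norm[L^6]{u_\pm(t)}+\norm[\hone]{u(t)-u_\pm(t)}\to0$, and $P(u)\lesssim|u|^6$) identifies this limit with $2\mce(\phi,\psi)$, which is \eqref{mcl-bound} up to the constant fixed by the normalization in \eqref{radf1}. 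Finally, the wave operators intertwine the flows, $S_\pm\circ W(t)=S_0(t)\circ S_\pm$, and $\mcr_\pm(\cdot,\cdot,0)$ intertwines $S_0(t)$ with translation by $t$ on $\reals\times\sphere^2$ \cite{Friedlander:2001}, so $\mcl_\pm\circ W(t)=\mcr_\pm(\cdot,\cdot,0)\circ S_0(t)\circ S_\pm$ is the $t$-translate of $\mcl_\pm$; thus $\mcl_\pm$ is a translation representation of the semilinear wave group.

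The main obstacle is the crucial identity of the second paragraph: one has to justify the Duhamel-at-infinity formula for $(\phi_\pm,\psi_\pm)$ and the exchange of the time integral with the temporal and spatial Fourier transforms under only the $L^1(\reals;L^2)$ control on $f(u)$, for which Proposition \ref{Fhat} is exactly the ingredient that makes $\whf_\pm(\la,\mp\la\theta)$ a well-defined $L^2$ object; the remaining arguments are density and continuity bookkeeping resting on \eqref{radf1}.
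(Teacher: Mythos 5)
Your proof is correct, and it takes a genuinely different route from the paper's. The paper proves invertibility of $\mcl_\pm$ constructively: given $F\in L^2(\reals\times\sphere^2)$, it sets $(\phi_0,\psi_0)=\mcr_+^{-1}F$, runs a contraction argument in a small ball of $L^5([T_0,\infty);L^{10}(\reals^3))$ near the linear solution, and then extends backward to $t=0$ via Shatah--Struwe; the wave-operator identity $\Omega_\pm=\mcl_\pm^{-1}\mcr_\pm$ only appears afterward, in Section~\ref{sec:asymp-comp}, as a byproduct. You instead take Bahouri--G\'erard's asymptotic completeness (bijectivity of the scattering-data maps $S_\pm$) as a black box, and prove the Fourier-side identity $\mcl_\pm(\phi,\psi)=\mcr_\pm(\phi_\pm,\psi_\pm,0)$ directly from \eqref{formradf} and Duhamel, so that $\mcl_\pm=\mcr_\pm(\cdot,\cdot,0)\circ S_\pm$ is a composition of bijections; I checked the trigonometric bookkeeping and the identity is right. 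Your route is shorter and more structural; the paper's is more self-contained, since its contraction step essentially re-proves the piece of asymptotic completeness it needs. The energy identity is handled identically in both: conservation of $\mce$ plus the Bahouri--Shatah decay $\int P(u(t))\,\dz\to 0$, and both inherit the same loose factor of $\tfrac12$ between \eqref{nonlinear-energy-norm} and the $E^2$ of \eqref{unit-linear} and \eqref{aux-cons}, which you correctly flag as a normalization convention. Two small remarks: (i) you call $S_\pm$ homeomorphisms, but only bijectivity is needed here and only bijectivity is supplied by asymptotic completeness (the paper defers continuity to Section~\ref{sec:cont}); (ii) the density step you invoke to pass from $C_0^\infty$ data to finite-energy data is not really needed --- the Duhamel formula, the Fourier manipulations, and Proposition~\ref{Fhat} make sense directly for $u\in X$ with $f(u)\in L^1(\reals;L^2(\reals^3))$ --- and if you do use it you are implicitly relying on Proposition~\ref{prop:strong-cont}, which in the paper's ordering comes after this theorem.
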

\begin{proof}
  The maps $\mcl_{\pm}$ are well defined, and the fact that they are
  translation representations of the semilinear wave group follows
  from the uniqueness of solutions to \eqref{CP} with finite energy.
  We want to show that their inverses are well defined as well, and
  that they satisfy \eqref{mcl-bound}.  We work with $\mcl_+.$
  The case of the backward radiation field is identical.
 
 We start by showing that  for any  $F\in L^{2}(\reals \times \sphere^{2})$ there exist a unique pair  $(\phi, \psi) \in \hone (\reals^{3}) \times L^{2}(\reals^{3})$ such that 
 $\mathcal{L}(\phi, \psi) = F,$ and moreover,
\begin{equation*}
\mce(\phi,\psi) = ||F||_{L^2(\mr \times \ms^2)}^2.
\end{equation*}

   We know there exists a unique pair $(\phi_{0},\psi_{0}) \in \hone(\reals^{3}) \times L^{2}(\reals^{3})$  such that $\mathcal{R}_{+}(\phi_{0}, \psi _{0}, 0) = F$ and in view of \eqref{aux3},
   \begin{gather}
   E(\phi_0,\psi_0)=   ||F||_{L^2(\mr \times \ms^2)}. \label{unit-linear}
   \end{gather}

   Let $v$ be the solution to (\ref{CP0}) with initial data $(\phi _{0}, \psi _{0})$ and $f=0.$  We then use the following Strichartz estimates due to Ginibre and Velo \cite{ginibre:1995}.
  \begin{thm}[Ginibre and Velo \cite{ginibre:1995}]
    \label{GV}
    Given $r\in [6,\infty)$, let $q$ satisfy
    \begin{equation*}
      \frac{1}{q} + \frac{3}{r} = \frac{1}{2}.
    \end{equation*}
    Then there exists $C_{r}$ such that for every $w(t,z)$ defined on $\reals\times \reals^{3}$,
    \begin{equation}
      \label{strichartzestimate}
      \norm[{L^{q}(\reals; L^{r}(\reals^{3}))}]{w} \leq C_{r}\left(\norm[L^{2}(\reals^{3})]{\grad_{t,z}w(0,z)} + \norm[{L^{1}(\reals; L^{2}(\reals^{3}))}]{(\pd[t]^{2}-\lap)w}\right).
    \end{equation}
  \end{thm}
  Therefore $v\in L^{5}(\reals; L^{10}(\reals^{3}))$, and thus for any $\delta > 0,$ one can choose $T_{0}$ such that
  \begin{equation}
    \norm[{L^{5}([T_{0},\infty); L^{10}(\reals^{3}))}]{v} < \delta. \label{choiceofdelta}
  \end{equation}
  Let $B_{\delta}$ denote the closed ball of radius $\delta$ in $L^{5}([T_{0}, \infty); L^{10}(\reals^{3}))$.  For $w \in B_{\delta}$ pick
  \begin{equation*}
    (\phi_{w}, \psi _{w}) \in \hone (\reals^{3})\times L^{2}(\reals^{3}) \text{ such that } \mathcal{R}_{+}(\phi _{w}, \psi _{w}, 0) (s+T_{0}, \theta) = -\mathcal{R}_{+}(0,0,-f(v+w))(s+T_{0}, \theta).
  \end{equation*}
  Notice that by the translation invariance, this corresponds to the
  solution of the Cauchy problem with data at $t= T_{0}$ instead of
  $t=0$.   In view of Theorem~\ref{radiation0} and the assumptions on
  $f$, we know that
  \begin{equation}
    E(\phi_{w}, \psi _{w}) \leq \norm[{L^{1}([T_{0},\infty);
      L^{2}(\reals^{3}))}]{f(v+w)} \leq C \norm[{L^{1}([T_{0}, \infty)
      ; L^{2}(\reals^{3}))}]{|v+w|^5} =  C\norm[{L^{5}([T_{0},\infty),
      L^{10}(\reals^{3}))}]{v+w}^5. \label{auxeq1} 
  \end{equation}
  Let $\tilde{w}$ be the solution of the Cauchy problem
  \begin{align*}
    (\pd[t]^{2} - \lap )\tilde{w} &= -f(v+w)\\
    \tilde{w}(T_{0}, z) = \phi_{w}(z), &\quad \pd[t]\tilde{w}(T_{0}, z) = \psi _{w}(z).
  \end{align*}

  But equation \eqref{strichartzestimate}  implies that
  \begin{equation*}
    \norm[{L^{5}([T_{0},\infty); L^{10}(\reals^{3}))}]{\tilde{w}} \leq
    C_{10}\left(E(\phi_{w}, \psi _{w}) + \norm[{L^{1}([T_{0}, \infty);
        L^{2}(\reals^{3}))}]{f(v+w)}\right) \leq
    2CC_{10}\norm[{L^{5}([T_{0},\infty);
      L^{10}(\reals^{3}))}]{v+w}^{5}. 
  \end{equation*}
  If we pick $\delta$ so small that\footnote{$\tilde{C}$ is defined
    below.}  $2C\cdot \tilde{C}\cdot
  C_{10}(3\delta)^{4} < \frac{1}{2}$, this defines a map
  \begin{align*}
    T: &B_{\delta} \to B_{\delta} \\
    &w \mapsto \tilde{w}.
  \end{align*}
  Moreover,  if $w_1,w_2 \in B_\del,$ equation~(\ref{strichartzestimate}) gives
  \begin{equation*}
    \norm[{L^{5}([T_{0},\infty); L^{10}(\reals^{3}))}]{Tw_{1} - Tw_{2}} \leq  
    \norm[{L^{1}([T_{0},\infty); L^{2}(\reals^{3}))}]{f(v+w_{1}) - f(v+w_{2})}.
  \end{equation*}
  For $\mu \in [0,1]$, let $h(\mu)= f\left( v+ \mu w_1+(1-\mu)
    w_2\right)$.  Then, there exists $\mu^*=\mu^*(t,z) \in [0,1]$ such
  that
  \begin{gather*}
    |h(1)-h(0)|= \left| f'\left( v + \mu^{*}w_{1} +
        (1-\mu^{*})w_{2}\right)\right| \cdot |w_{1}-w_{2}|.
  \end{gather*}
  Let us denote $\theta= f'(v + \mu^{*}w_{1} + (1-\mu^{*})w_{2})$. Therefore,
  using H\"older's inequality, with $p=5$ and $q=5/4$ we obtain
  \begin{align*}
    \norm[{L^{1}([T_{0},\infty);
      L^{2}(\reals^{3}))}]{f(v+w_{1}) - f(v+w_{2})}
    &\leq
    \int_\mr \left(\int_{\mr^3} | w_1-w_2|^2 |\theta|^2 \; dz\right)^\ha \; dt \leq \\
    \int_\mr \left(\int_{\mr^3} | w_1-w_2|^{10} \;
      dz\right)^{\frac{1}{10}} \left( \int_{\mr^3}
      |\theta|^{\frac{5}{2}} \; dz\right)^{\frac{2}{5}} \; dt .
  \end{align*}
  Using H\"older's inequality with the same exponents we obtain
  \begin{equation*}
    \norm[{L^{1}([T_{0},\infty); L^{2}(\reals^{3}))}]{f(v+w_{1}) - f(v+w_{2})} \leq 
    \norm[{L^{5}([T_{0},\infty); L^{10}(\reals^{3}))}]{w_1-w_2} 
    \left[ \int_\mr\left( \int_{\mr^3} |\theta|^{\frac{5}{2}} \; dz\right)^{\frac{1}{2}} \; dt\right]^{\frac{4}{5}}.
  \end{equation*}
  But there is a constant $\tilde{C}$ so that $|f'(u)| \leq
  \tilde{C}|u|^{4}$ for all $u$, so 
  \begin{equation*}
    \left[ \int_\mr\left( \int_{\mr^3} |\theta|^{\frac{5}{2}} \;
        dz\right)^{\frac{1}{2}} \; dt\right]^{\frac{4}{5}} \leq \tilde{C}
    \norm[{L^{5}([T_{0},\infty);
      L^{10}(\reals^{3}))}]{v+\mu^*w_1+(1-\mu^*) w_2}^{4} \leq
    \tilde{C}(3\del)^4. 
  \end{equation*}

  Therefore, with the choice of $\delta$ above, there exists a unique
  $w^{*}\in B_{\delta}$ such that $Tw^{*} = w^{*}$, and by
  construction
  \begin{equation}
    \mathcal{R}_{+}\left(\phi_{w^{*}} +v(T_{0}), \psi _{w^{*}} +
      \pd[t]v(T_{0}), -f(v+w^{*})\right)(s+T_{0},\theta) =
    F(s,\theta) \label{newaux} 
  \end{equation}
  and since $u = v+w^{*}$ satisfies
  \begin{align}
    \begin{aligned}
      (\pd[t]^{2} - \lap) u &= -f(u) \\
      u(T_{0}, z) = \phi_{w^{*}}(z) + v(T_{0}, z), &\quad \pd[t]u(T_{0},z) = \psi _{w^{*}}(z) + \pd[t]v(T_{0},z).
    \end{aligned}\label{auxCP1}
  \end{align}
  By the result of Shatah and Struwe \cite{Shatah:1994} this solution
  can be continued (uniquely in $X_{\loc}$) to all times $t<T_{0}$.  Therefore $(\phi, \psi) = (u(0,z), \pd[t]u(0,z))$ is the unique pair that satisfies $\mathcal{L}(\phi, \psi) = F.$ 
  
  The last step is to estimate the energy of the initial data in terms of $F.$    We know that $\mce(u(t),\p_t u(t))$ is conserved, so for any $t>0,$
  \begin{equation}
  \mce(\phi,\psi)= \mce(u(t),\p_t u(t))= \left( E(u(t),\p_t u(t))\right)^{2}+ \int_{\reals^{3}}P(u)(t,z) \; dz. \label{aux-cons}
  \end{equation}
  In particular, for $t=T_0,$
  \begin{equation}
  \mce(\phi,\psi)= \left( E(v(T_0)+\phi_{w^*}, \p_t v(T_0)+ \psi_{w^*})\right)^{2} + \int_{\mr^3} P(u)(t,z) \; dz. \label{aux-cons1}
  \end{equation}
  Since $E(v(T_0),\p_t v(T_0))= E(\phi_0,\psi_0)= ||F||_{L^2(\mr\times \ms^2)},$ we deduce from \eqref{auxeq1} that
  \begin{equation}
 \mce(\phi,\psi)=  ||F||_{L^2(\mr\times \ms^2)}^2 + \int_{\mr^3} P(u)(t,z)\; dz+ O(\del). \label{aux-cons2} 
  \end{equation}
  
  But the construction is independent of the choice of $T_0$ for which
  \eqref{choiceofdelta} is satisfied, and we know from the result of
  Bahouri and Shatah \cite{bahouri:1998} that
  \begin{gather*}
  \lim_{t\uparrow \infty} \int_{\mr^3} |u(t,z)|^6 \; dz=0 \quad \text{
    and therefore} \quad \lim_{t\uparrow\infty}\int_{\reals^{3}}P(u)(t,z)\;dz.
  \end{gather*}
  Since $\delta$ only depends on the constants $C_{10}$ (from the
  Strichartz estimate \eqref{strichartzestimate}), $C$, and
  $\tilde{C}$ (both from the nonlinearity), we obtain \eqref{mcl-bound}.
\end{proof}

In fact, for radial data, the maps $\mcl_{\pm}$ also preserve higher regularity.
\begin{thm}
  \label{thm:higer-reg-rad-field}
  If $(\phi, \psi) \in \tilde{H}^{k+1}(\reals^{3})\times
  H^{k}(\reals^{3})$, then $\mathcal{L}_{\pm}(\phi, \psi) \in
  H^{k}(\reals \times \sphere^{2})$.  Moreover, if $F\in H^{k}(\reals
  \times \sphere^{2})$ is radial, then $\mathcal{F} = \mcl_{\pm}(\phi,
  \psi)$ for $(\phi, \psi) \in \tilde{H}^{k+1}(\reals^{3})\times H^{k}(\reals^{3})$.
\end{thm}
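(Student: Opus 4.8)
The plan is to reduce both halves of the statement to the \emph{linear} facts recorded in Section~\ref{sec:radi-fields-line} — that for radial data $\mathcal{R}_{+}$ intertwines $\lap_{z}$ with $\pd[s]^{2}$, obeys $\norm[H^{k}(\reals)]{\mathcal{R}_{+}(\phi,\psi,f)} \le E_{k}(\phi,\psi) + \norm[L^{1};H^{k}]{f}$, and restricts to an isometric isomorphism $\tilde{H}^{k+1}_{\text{rad}}(\reals^{3}) \times H^{k}_{\text{rad}}(\reals^{3}) \to H^{k}(\reals)$ — together with one new analytic input: the Shatah--Struwe solution $u$ attached to radial $\tilde{H}^{k+1}\times H^{k}$ data satisfies $f(u) \in L^{1}(\reals; H^{k}(\reals^{3}))$. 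Since~\eqref{CP} is $O(3)$-invariant, the uniqueness in Theorem~\ref{wellposed} forces $u$, and hence $f(u)$, to be radial whenever the data is, exactly as in the proof of Proposition~\ref{control}.

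\textbf{The forcing lies in $L^{1}H^{k}$.} Let $(\phi,\psi) \in \tilde{H}^{k+1}_{\text{rad}} \times H^{k}_{\text{rad}}$ and let $u \in X$ be the solution. I claim $\nabla^{j}u \in L^{5}(\reals; L^{10}(\reals^{3}))$ for $0 \le j \le k$; granting this, the Leibniz rule, the chain-rule bounds $|f^{(m)}(u)| \lesssim |u|^{5-m}$ following from (A1), (A3), (A4), and repeated use of H\"older (distributing five factors into $L^{10}_{z}$ and then into $L^{5}_{t}$, as in \eqref{auxeq1}) give $\norm[L^{1};H^{k}]{f(u)} < \infty$. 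For the claim, one runs a Strichartz bootstrap on $w = \pd[\alpha]u$, $|\alpha|\le k$: its Cauchy data lie in $L^{2}$ because $(\phi,\psi) \in \tilde{H}^{k+1}\times H^{k}$, its forcing $\pd[\alpha]f(u)$ is controlled in $L^{1}L^{2}$ by the same H\"older argument in terms of $\norm[L^{5};L^{10}]{\nabla^{j}u}$ for $j \le |\alpha|$, and the nonlinear term on each interval $I$ carries a factor $\norm[{L^{5}(I;L^{10})}]{u}^{4}$. Because $\norm[L^{5}(\reals;L^{10})]{u} < \infty$ one may partition $\reals$ into finitely many intervals $I_{1},\dots,I_{N}$ on each of which $\norm[{L^{5}(I_{\ell};L^{10})}]{u}$ is small relative to $C_{10}$ and $\tilde{C}$ (this is precisely the mechanism of Bahouri--G\'erard~\cite{Bahouri:1999} behind~\eqref{bg1}); on each $I_{\ell}$ the estimate \eqref{strichartzestimate} closes the bootstrap by induction on $|\alpha|$, and summing over $\ell$ proves the claim. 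With $f(u) \in L^{1}H^{k}$ and $f(u)$ radial, $\mathcal{L}_{+}(\phi,\psi) = \mathcal{R}_{+}(\phi,\psi,-f(u)) \in H^{k}(\reals\times\sphere^{2})$ by the linear bound above; the case of $\mathcal{L}_{-}$ is identical.

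\textbf{Surjectivity onto $H^{k}$.} Let $F \in H^{k}(\reals\times\sphere^{2})$ be radial. Since $F \in L^{2}(\reals\times\sphere^{2})$, Theorem~\ref{L2bound-radf} produces the unique $(\phi,\psi) \in \hone(\reals^{3})\times L^{2}(\reals^{3})$ with $\mathcal{L}_{+}(\phi,\psi) = F$, and $(\phi,\psi)$ is radial by the orthogonal-invariance argument of Proposition~\ref{control} (apply $U \in O(3)$, use $U^{*}F = F$ and uniqueness). I would then rerun the contraction-mapping construction in the proof of Theorem~\ref{L2bound-radf} in the higher norm: the pair $(\phi_{0},\psi_{0})$ with $\mathcal{R}_{+}(\phi_{0},\psi_{0},0)=F$ now lies in $\tilde{H}^{k+1}_{\text{rad}}\times H^{k}_{\text{rad}}$ by the linear isometry, so the free solution $v$ has $\nabla^{j}v \in L^{5}(\reals;L^{10})$ for $j\le k$; for $T_{0}$ large the map $T : w \mapsto \tilde{w}$ is a contraction on $\{\, w \in B_{\delta} : \sum_{j=1}^{k}\norm[{L^{5}([T_{0},\infty);L^{10})}]{\nabla^{j}w} \le \delta\,\}$, because \eqref{auxeq1}, \eqref{strichartzestimate}, and the H\"older bound on $f(v+w)$ in $L^{1}([T_{0},\infty);H^{k})$ all survive differentiation once $\delta$ is small. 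The fixed point $w^{*}$ gives $u = v + w^{*}$ on $[T_{0},\infty)$; since then $f(v+w^{*}) \in L^{1}([T_{0},\infty);H^{k})$, the linear isometry places the Cauchy data $(\phi_{w^{*}}+v(T_{0}),\psi_{w^{*}}+\pd[t]v(T_{0}))$ at $t=T_{0}$ in $\tilde{H}^{k+1}_{\text{rad}}\times H^{k}_{\text{rad}}$, and the persistence-of-regularity result of the appendix propagates this regularity back to $t=0$, so $(u(0),\pd[t]u(0)) \in \tilde{H}^{k+1}_{\text{rad}}\times H^{k}_{\text{rad}}$; by the uniqueness in Theorem~\ref{L2bound-radf} this coincides with $(\phi,\psi)$.

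\textbf{Main obstacle.} The crux is the global-in-time claim $\nabla^{j}u \in L^{5}(\reals;L^{10})$, equivalently $f(u) \in L^{1}(\reals;H^{k})$. Locally in time this is the appendix's persistence of regularity; the decisive point is that one must chop $\reals$ into finitely many intervals of small $L^{5}L^{10}$-norm — available precisely because $u \in L^{5}(\reals;L^{10})$ by~\eqref{bg1} — so that the Strichartz bootstrap closes with a uniform constant all the way to $t=\pm\infty$. A secondary nuisance is bounding $\pd[\alpha]f(u)$ for $|\alpha| \ge 2$, which forces one to record the derivative estimates $|f^{(m)}(u)| \lesssim |u|^{5-m}$ implicit in (A1), (A3), (A4) and to use a Moser-type product inequality; once these are in place the argument is an induction on $|\alpha|$ seeded by the $|\alpha|=0$ case $u\in X$.
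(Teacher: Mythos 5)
Your proposal is correct and follows essentially the same route as the paper: the first claim is reduced to $f(u)\in L^{1}(\reals;H^{k})$ via persistence of regularity and then dispatched by the linear radial intertwining bound, and the second claim is obtained by re-running the contraction-mapping argument of Theorem~\ref{L2bound-radf} in $L^{5}W^{k,10}$ and then propagating the higher regularity of the data at $t=T_{0}$ back to $t=0$. The only variation is that you re-derive persistence of regularity via an interval-splitting bootstrap keyed to small $L^{5}L^{10}$ norm, whereas the appendix (Theorem~\ref{thm:higher-reg-NL}) cited by the paper closes the same estimate with a Gronwall argument driven by the global $L^{4}L^{12}$ norm; these are interchangeable standard techniques, so the substance is unchanged.
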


\begin{proof}
  The proof of the first claim follows from persistence of regularity
  (Theorem~\ref{thm:higher-reg-NL}).  

  To prove the second claim, we repeat the iteration scheme in the
  proof of the previous theorem.  We start by noting that there is a
  unique $(\phi_{0}, \psi_{0}) \in \tilde{H}^{k+1}\times H^{k}$ so
  that $\mcr_{+}(\phi _{0}, \psi_{0}) = F$ and $E_{k}(\phi_{0},
  \psi_{0}) \leq C\norm[H^{k}]{F}$.  Let $v_{0}$ be the solution of
  equation~\eqref{CP0} with initial data $(\phi_{0}, \psi_{0})$ and
  vanishing inhomogeneous term.  We know by
  Proposition~\ref{prop:higher-order-energy-strichartz} that $v_{0}
  \in L^{5}W^{k,10}$ and so for any $\delta > 0$ there is a $T_{0}$ so
  that
  \begin{equation*}
    \norm[{L^{5}([T_{0}, \infty), W^{k,10})}]{v_{0}} < \delta.
  \end{equation*}
  We now repeat the scheme in the proof of the previous theorem,
  replacing all instances of $L^{5}L^{10}$ with $L^{5}W^{k,10}$.
  H{\"o}lder's inequality and the product rule allow us to
  estimate $\norm[L^{1}H^{k}]{(v+w)^{5}}$ in terms of
  $\norm[L^{5}W^{k,10}]{v}$ and $\norm[L^{5}W^{k,10}]{w}$.  We then
  obtain a contraction map (as before) of a small ball in
  $L^{5}W^{k,10}$.  Persistence of regularity (as in
  Theorem~\ref{thm:higher-reg-NL} then shows that $(\phi, \psi) \in
  \tilde{H}^{k+1}\times H^{k}$.  Uniqueness guarantees that these are
  the same $(\phi, \psi)$ as in Theorem~\ref{L2bound-radf}.
\end{proof}

\section{Some remarks about continuity}
\label{sec:cont}

In this section we show both the ``strong'' continuity of the
nonlinear radiation field and its ``norm'' continuity near zero.

\subsection{``Strong'' continuity}
We prove the following proposition:
\begin{prop}
  \label{prop:strong-cont}
  If $(\phi, \psi), (\phi _{n}, \psi_{n}) \in \dot{H}^{1}(\reals^{3})
  \times L^{2}(\reals^{3})$ and $(\phi_{n}, \psi_{n}) \to (\phi,
  \psi)$ in $\dot{H}^{1}\times L^{2}$, then $\mathcal{L}_{+}(\phi_{n},
  \psi_{n}) \to \mathcal{L}_{+}(\phi, \psi)$ in $L^{2}$.
\end{prop}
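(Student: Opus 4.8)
The plan is to use the linearity of the inhomogeneous linear radiation field $\mcr_+$ in all three of its arguments to peel off the ``data'' part and reduce everything to continuous dependence of the Shatah--Struwe flow on its Cauchy data, measured in the Strichartz norm $L^5(\mr;L^{10}(\mr^3))$.

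Let $u,u_n\in X$ be the solutions of \eqref{CP} with data $(\phi,\psi)$ and $(\phi_n,\psi_n)$, so that $\mcl_+(\phi,\psi)=\mcr_+(\phi,\psi,-f(u))$, and similarly for $u_n$. By linearity of $\mcr_+$,
\begin{equation*}
  \mcl_+(\phi_n,\psi_n)-\mcl_+(\phi,\psi)
  =\mcr_+(\phi_n-\phi,\psi_n-\psi,0)-\mcr_+\bigl(0,0,f(u_n)-f(u)\bigr).
\end{equation*}
By \eqref{radf1} (with vanishing forcing term), the first summand has $L^2$ norm at most $E(\phi_n-\phi,\psi_n-\psi)$, which tends to $0$ by hypothesis; again by \eqref{radf1}, the second summand has $L^2$ norm at most $\|f(u_n)-f(u)\|_{L^1;L^2}$. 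So it suffices to show $\|f(u_n)-f(u)\|_{L^1(\mr;L^2(\mr^3))}\to 0$.

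For this I argue exactly as in the contraction estimate in the proof of Theorem~\ref{L2bound-radf}. The bound $|f'(\xi)|\le\tilde C|\xi|^4$ and the mean value theorem give, pointwise, $|f(u_n)-f(u)|\le C\bigl(|u_n|+|u|\bigr)^4|u_n-u|$; H\"older in $z$ with exponents $5/2$ and $10$ (so that $\tfrac25+\tfrac1{10}=\tfrac12$), followed by H\"older in $t$ with exponents $5/4$ and $5$, then yields
\begin{equation*}
  \|f(u_n)-f(u)\|_{L^1;L^2}\le C\bigl(\|u_n\|_{L^5L^{10}}^4+\|u\|_{L^5L^{10}}^4\bigr)\,\|u_n-u\|_{L^5L^{10}} .
\end{equation*}
Thus the whole statement reduces to the claim that $\|u_n-u\|_{L^5(\mr;L^{10}(\mr^3))}\to 0$: this convergence together with the displayed bound forces the right-hand side to $0$ (and already gives $\sup_n\|u_n\|_{L^5L^{10}}<\infty$).

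That claim is precisely continuous dependence on data for the energy-critical wave equation, and it is the only substantive point. I would prove it in the spirit of Theorem~\ref{L2bound-radf}: partition $\mr$ into finitely many intervals $I_j=[a_j,a_{j+1}]$ on which $\|u\|_{L^5(I_j;L^{10})}$ is smaller than a fixed $\delta$ (possible because $u\in X$), arranged so that $0$ is an endpoint; then, working outward from $0$ where the data of $u_n$ and $u$ already converge in $\dot H^1\times L^2$, run on each $I_j$ a Strichartz-based contraction comparing $u_n$ to $u$. On $I_j$ this bounds both $\|u_n-u\|_{L^5(I_j;L^{10})}$ and the energy of $u_n-u$ at $a_{j+1}$ by a fixed multiple of the energy of $u_n-u$ at $a_j$, the error from the difference of the nonlinearities being absorbed by the smallness of $\delta$ exactly as above. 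Since there are only finitely many intervals (a number depending on $\|u\|_{L^5L^{10}}$ and $\delta$ but not on $n$), the accumulated constant is fixed, so $E(\phi_n-\phi,\psi_n-\psi)\to0$ propagates to give $\|u_n-u\|_{L^5L^{10}}\to0$. Equivalently one may simply cite the standard perturbation (stability) lemma for the energy-critical equation together with the uniform Strichartz bound coming from Bahouri--G\'erard~\cite{Bahouri:1999}. The delicate part is the bookkeeping in the iteration, i.e.\ controlling how the energy of the difference $u_n-u$ at the endpoints grows from one interval to the next; this is where I expect the main work to lie. (One also gets for free, from Theorem~\ref{L2bound-radf} and the Sobolev embedding $\dot H^1\hookrightarrow L^6$, that the norms already converge, $\|\mcl_+(\phi_n,\psi_n)\|_{L^2}^2=\mce(\phi_n,\psi_n)\to\mce(\phi,\psi)=\|\mcl_+(\phi,\psi)\|_{L^2}^2$, so that in a Hilbert space it would suffice to prove weak $L^2$ convergence; but the argument above gives strong convergence directly.)
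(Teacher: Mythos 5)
Your proof is correct, and the first half — decomposing $\mcl_+(\phi_n,\psi_n)-\mcl_+(\phi,\psi)$ via linearity of $\mcr_+$, bounding the first piece by $E(\phi_n-\phi,\psi_n-\psi)$ and the second by $\|f(u_n)-f(u)\|_{L^1L^2}$, then applying the mean-value-theorem/H\"older estimate to reduce everything to $\|u_n-u\|_{L^5L^{10}}\to 0$ — matches the paper essentially verbatim. Where you diverge is in how you establish the continuity of the solution map $(\phi,\psi)\mapsto u$ in the $L^5L^{10}$ topology, which is the crux.

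You propose the standard perturbation/stability argument: partition $\reals$ into finitely many intervals on which $\|u\|_{L^5L^{10}}$ is below a fixed $\delta$, then run a Strichartz contraction on each interval, propagating smallness of $E(u_n-u)$ from one endpoint to the next. As you note, the bookkeeping — closing the bootstrap that $\|u_n-u\|_{L^5(I_j;L^{10})}$ stays small so that $\|u_n\|_{L^5(I_j;L^{10})}\le 2\delta$, and controlling the geometric growth of $E(u_n-u)(a_j)$ over $J$ intervals — is the substance, but this is entirely standard and correct. The paper instead invokes Lemma~3 of Bahouri--G\'erard (their Corollary~3, stated here as Lemma~\ref{lem:bg99}): under the hypothesis that $(\phi_n,\psi_n)\to(\phi,\psi)$ in energy, it produces a single $t_0$ so that $\|u(t)\|_{L^6}+\|u_n(t)\|_{L^6}\le\epsilon/A^4$ uniformly in $n$ for $|t|>t_0$, where $A$ is the Bahouri--G\'erard uniform $L^4L^{12}$ bound. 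This kills the nonlinear contribution on $[t_0,\infty)$ in a single Strichartz step, with no iteration: the forcing $\|f(u)-f(u_n)\|_{L^1([t_0,\infty);L^2)}$ is controlled by $A^4\|u-u_n\|_{L^\infty L^6}\lesssim\epsilon$ directly. For $|t|\le t_0$ they use ordinary finite-time continuous dependence. So the paper replaces your inductive propagation across intervals by a single compactness-flavored input (uniform $L^6$ scattering) that handles the tail all at once; your route is more elementary and self-contained given the global Strichartz bound, while the paper's is shorter and avoids the bootstrap bookkeeping entirely. Both approaches crucially use Corollary~2 of Bahouri--G\'erard for the uniform a priori Strichartz bound.
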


The proof of this proposition relies on a lemma due to Bahouri and
G{\'e}rard \cite{Bahouri:1999} that allows us to control uniformly the
decay of the $L^{6}$ norm of solutions.
\begin{lem}[Corollary 3 of \cite{Bahouri:1999}]
  \label{lem:bg99}
  Let $\mathcal{B}$ be a set of Shatah-Struwe solutions to
  equation~\eqref{CP}, with the following properties:
  \begin{equation*}
    \sup_{u\in\mathcal{B}} \left(\frac{1}{2} \int_{\reals^{3}} \left( \left|
        \grad u(0,z)\right|^{2} + \left| \pd[t]u(0,z)\right|^{2}
    \right) \dz + \frac{1}{6}\int_{\reals^{3}}\left|
      u(0,z)\right|^{6}\dz\right) < \infty,
  \end{equation*}
  and
  \begin{equation*}
    \lim_{R\to \infty}\sup _{u\in\mathcal{B}}\int_{|z|>R}\left|
      \grad_{t,z}u(0,z)\right|^{2} \dz = 0.
  \end{equation*}
  Then we have
  \begin{equation*}
    \lim_{|t|\to\infty}\sup_{u\in\mathcal{B}}\norm[L^{6}]{u(t,\cdot)}
    =0.
  \end{equation*}
\end{lem}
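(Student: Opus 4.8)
Since the statement is quoted verbatim as Corollary~3 of \cite{Bahouri:1999}, the ``proof'' in the paper is simply that reference; for orientation I record the idea behind it. The lemma is the uniform-over-$\mathcal{B}$ form of the single-solution decay $\norm[L^{6}]{u(t,\cdot)}\to 0$ of Bahouri--Shatah \cite{bahouri:1998} that already appeared in the proof of Theorem~\ref{L2bound-radf}. The natural route to the uniform version is a concentration--compactness argument: one argues by contradiction, producing solutions $u_{n}\in\mathcal{B}$, times $|t_{n}|\to\infty$, and $\eps_{0}>0$ with $\norm[L^{6}]{u_{n}(t_{n},\cdot)}\geq\eps_{0}$, and then applies the Bahouri--G\'erard profile decomposition to the bounded sequence of Cauchy data $(u_{n}(0),\pd[t]u_{n}(0))$. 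The uniform energy bound (the first hypothesis on $\mathcal{B}$) reduces the decomposition to finitely many non-negligible linear concentration profiles by almost-orthogonality, and the uniform absence of energy at spatial infinity (the second hypothesis) constrains the scaling and the space--time translation parameters of those profiles. The nonlinear superposition (approximation) theorem of \cite{Bahouri:1999} then represents $u_{n}$, up to an error small in $L^{5}(\reals;L^{10}(\reals^{3}))$ and in the energy space, as the sum of the associated nonlinear profiles plus a free remainder; evaluating at $t=t_{n}$ and using the single-solution decay of \cite{bahouri:1998} on each profile --- each of which, its parameters being controlled, is seen at an internal time running to $\pm\infty$ --- together with the smallness of the remainder, shows that $\norm[L^{6}]{u_{n}(t_{n},\cdot)}=o(1)$, contradicting $\norm[L^{6}]{u_{n}(t_{n},\cdot)}\geq\eps_{0}$.

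The step I would expect to be most delicate, and which is the substance of \cite{Bahouri:1999}, is precisely turning the two qualitative hypotheses on $\mathcal{B}$ into genuinely uniform control on the profiles: uniform bounds on their number, scales, and space--time cores, uniform smallness of the Strichartz remainder, and a uniform handling of the remainder's $L^{6}$ norm at the time $t_{n}$, so that the conclusion emerges uniformly in $u\in\mathcal{B}$ rather than for each solution separately. Because that analysis is exactly what \cite{Bahouri:1999} carries out in full, in the paper I would content myself with citing Corollary~3 there.
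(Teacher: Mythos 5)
Correct: the paper gives no proof of this lemma, simply citing it as Corollary 3 of Bahouri--G\'erard~\cite{Bahouri:1999}, and you rightly identify that the citation is the proof. Your sketch of the underlying profile-decomposition and concentration--compactness argument is a fair summary of what that reference does, but it is supplementary rather than something the paper itself supplies.
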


\begin{proof}[Proof of Proposition \ref{prop:strong-cont}]
  The main step in this proof is to show the ``strong'' continuity of
  the solution operator for equation~\eqref{CP} as an operator
  $\dot{H}^{1}\times L^{2}\to L^{5}(\reals; L^{10})$.  

  Let $u$ be the solution of equation~\eqref{CP} with initial data
  $(\phi, \psi)$ and $u_{n}$ the solution with data $(\phi_{n},
  \psi_{n})$.  

  Fix $\epsilon > 0$.  We claim that if $n$ is large enough then
  $\norm[L^{5}L^{10}]{u-u_{n}} \leq \epsilon$.

  Corollary 2 of \cite{Bahouri:1999} provides a constant $A$ so that
  \begin{equation*}
    \norm[L^{5}L^{10}]{u}, \norm[L^{5}L^{10}]{u_{n}} \leq A
  \end{equation*}
  for all $n$.  Note that the Strichartz estimates show that we may
  also assume (at the cost of replacing $A$ with a larger constant)
  that $A$ controls the $L^{4}L^{12}$ and $L^{\infty}L^{6}$ norms of
  $u$ and $u_{n}$.

  Because $(\phi_{n}, \psi_{n}) \to (\phi, \psi)$ in energy norm,
  Lemma~\ref{lem:bg99} shows that there is some $t_{0}$ so that for
  all $t > t_{0}$, and all $n$
  \begin{equation*}
    \norm[L^{6}]{u(t)} + \norm[L^{6}]{u_{n}(t)} \leq \frac{\epsilon}{A^{4}}.
  \end{equation*}

  Equation~\eqref{CP} is well-posed on arbitrarily long intervals, so
  there is some $N$ so that if $n > N$, we have
  \begin{equation*}
    E(u-u_{n})(t_{0}) + \norm[L^{5}({[-t_{0},t_{0}]};L^{10})]{u-u_{n}}
    < \epsilon.
  \end{equation*}

  Applying the Strichartz estimate again shows that
  \begin{equation*}
    \norm[L^{5}({[t_{0},\infty)}; L^{10})]{u-u_{n}} \leq C\left(
      E(u-u_{n})(t_{0}) +
      A^{4}\norm[L^{\infty}({[t_{0},\infty)};L^{6})]{u-u_{n}}\right)
    \leq 2C\epsilon,
  \end{equation*}
  where $C$ is independent of $\epsilon$ and $t_{0}$.  Combining this
  estimate with its counterpart on $(-\infty, -t_{0}]$ and the
  estimate above yields that if $n \geq N$,
  \begin{equation*}
    \norm[L^{5}L^{10}]{u-u_{n}} \leq 3C\epsilon.
  \end{equation*}
  This shows the ``strong'' continuity of the solution operator.
  
  We finally combine this estimate with the one in
  Theorem~\ref{radiation0} to find that
  \begin{align*}
    \norm[L^{2}]{\mathcal{L}_{+}(\phi, \psi) -
      \mathcal{L}_{+}(\phi_{n}, \psi_{n})} &=
    \norm[L^{2}]{\mathcal{R}_{+}(\phi - \phi_{n}, 0, 0) +
      \mathcal{R}_{+}(0, \psi - \psi_{n},0) -
      \mathcal{R}_{+}(0,0,f(u)-f(u_{n}))} \\
    &\leq \epsilon + (3C\epsilon)^{5},
  \end{align*}
  finishing the proof.
\end{proof}

\subsection{Continuity near zero}
We now show that the radiation field is continuous in a stronger sense
near $0$.  More precisely, we prove the following proposition.
\begin{prop}
  \label{prop:norm-cont}
  There is a $\gamma > 0$ such that the radiation field is a
  continuous map from $\{ (\phi, \psi)\in \dot{H}^{1}\times L^{2} : \mathcal{E}(\phi, \psi) <
  \gamma\}$ to $\{ F\in L^{2}(\reals \times \sphere^{2}):
  \norm[L^{2}]{F}<\gamma\}$.  

  In other words, for all $\epsilon > 0$ there is a $\delta > 0$ so
  that if $(\phi_{1},\psi_{1})$ and $(\phi_{2},\psi_{2})$ satisfy
  $\mathcal{E}(\phi_{i},\psi_{i}) < \gamma$ and
  $\norm[\dot{H}^{1}\times L^{2}]{(\phi_{1}-\phi_{2},
    \psi_{1}-\psi_{2})} < \delta$, then
  $\norm[L^{2}]{\mathcal{L}_{+}(\phi_{1},\psi_{1}) -
    \mathcal{L}_{+}(\phi_{2},\psi_{2}) } < \epsilon$.
\end{prop}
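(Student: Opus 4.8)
The plan is to combine two ingredients: the global Strichartz bound of Bahouri--G{\'e}rard (which gives a uniform $L^5L^{10}$ bound on any family of solutions with energy below a small threshold) and the energy-difference estimate already used in the proof of Proposition~\ref{prop:strong-cont}, but now made \emph{quantitative} rather than merely asymptotic. First I would fix $\gamma$ small enough that the small-data global existence theory applies on all of $\reals$ and that the solution map $\dot H^1\times L^2\to L^5L^{10}$ is bounded by a fixed constant $A$ on the ball $\{\mathcal{E}<\gamma\}$; one also needs $\gamma$ small enough (using $\mathcal{E}\sim E^2$ for small data, a consequence of (A4)) that $\mathcal{E}(\phi,\psi)<\gamma$ forces $E(\phi,\psi)$, and hence $\|\mathcal{L}_+(\phi,\psi)\|_{L^2}$, to be $<\gamma$ as well; the latter uses the identity $\mathcal{E}(\phi,\psi)=\|\mathcal{L}_+(\phi,\psi)\|_{L^2}^2$ from Theorem~\ref{L2bound-radf}. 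This establishes that $\mathcal{L}_+$ really does map the small $\mathcal{E}$-ball into the small $L^2$-ball.

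Next I would prove the $(\epsilon,\delta)$ continuity statement. Let $u_1,u_2$ be the solutions with data $(\phi_1,\psi_1),(\phi_2,\psi_2)$, both in the small-data regime, and set $w=u_1-u_2$. The difference solves $(\pd_t^2-\lap)w=-(f(u_1)-f(u_2))$ with data $(\phi_1-\phi_2,\psi_1-\psi_2)$. Applying the Ginibre--Velo estimate (Theorem~\ref{GV}) to $w$ on all of $\reals$ gives
\begin{equation*}
  \norm[L^5L^{10}]{w}\leq C\left(\norm[L^2]{\grad_{t,z}w(0)}+\norm[L^1L^2]{f(u_1)-f(u_2)}\right).
\end{equation*}
The nonlinear term is handled exactly as in Theorem~\ref{L2bound-radf}: the mean value theorem plus $|f'(u)|\leq \tilde C|u|^4$ plus H{\"o}lder gives $\norm[L^1L^2]{f(u_1)-f(u_2)}\leq \tilde C\,\norm[L^5L^{10}]{w}\,\bigl(\norm[L^5L^{10}]{u_1}+\norm[L^5L^{10}]{u_2}\bigr)^4\leq \tilde C(2A)^4\norm[L^5L^{10}]{w}$. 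Since $\gamma$ was chosen so that $\tilde C(2A)^4 C<\tfrac12$, this absorbs into the left side and yields $\norm[L^5L^{10}]{w}\leq 2C\,\norm[\dot H^1\times L^2]{(\phi_1-\phi_2,\psi_1-\psi_2)}$ globally in time --- i.e., the solution map is \emph{Lipschitz}, not merely continuous, on the small ball. Then, as at the end of the proof of Proposition~\ref{prop:strong-cont},
\begin{equation*}
  \norm[L^2]{\mathcal{L}_+(\phi_1,\psi_1)-\mathcal{L}_+(\phi_2,\psi_2)}
  \leq \norm[\dot H^1\times L^2]{(\phi_1-\phi_2,\psi_1-\psi_2)}+\norm[L^1L^2]{f(u_1)-f(u_2)}
  \leq \bigl(1+\tilde C(2A)^4\bigr)\norm[\dot H^1\times L^2]{(\phi_1-\phi_2,\psi_1-\psi_2)},
\end{equation*}
using $\mathcal{L}_+(\phi_i,\psi_i)=\mathcal{R}_+(\phi_i,\psi_i,-f(u_i))$, linearity of $\mathcal{R}_+$, and the bound \eqref{radf1}. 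Choosing $\delta=\epsilon/(1+\tilde C(2A)^4)$ finishes the proof.

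The main obstacle --- and the reason the small-energy hypothesis is essential --- is the absorption step: without a \emph{global-in-time} a priori bound on $\norm[L^5L^{10}]{u_i}$ that is uniform and small, the factor $\tilde C(2A)^4C$ multiplying $\norm[L^5L^{10}]{w}$ cannot be made $<\tfrac12$, and the difference estimate does not close. For large energy one only has the asymptotic decay of Lemma~\ref{lem:bg99}, which forces the splitting of the time axis used in Proposition~\ref{prop:strong-cont} and gives qualitative continuity but no uniform modulus. Restricting to $\mathcal{E}<\gamma$ is exactly what lets the fixed-point/contraction structure operate on the whole real line at once, turning the earlier ``strong continuity'' argument into a clean Lipschitz estimate. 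A minor technical point to check is that $\mathcal{E}<\gamma$ genuinely controls the $L^5L^{10}$ norm of the solution by a fixed $A$; this follows from the small-data global theory underlying \eqref{bg1} together with the equivalence $\mathcal{E}(\phi,\psi)\sim E(\phi,\psi)^2$ near the origin, which in turn rests on hypothesis (A4) giving $P(u)\sim|u|^6$ and the Sobolev embedding $\dot H^1(\reals^3)\hookrightarrow L^6(\reals^3)$.
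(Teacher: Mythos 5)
Your proof is correct and follows essentially the same strategy as the paper: a uniform small-data $L^5L^{10}$ bound from Bahouri--G\'erard, a Strichartz estimate applied to the difference $w=u_1-u_2$, absorption of the nonlinear term (via $|f'(u)|\lesssim |u|^4$ and H\"older) for $\gamma$ small, and then \eqref{radf1} to pass from the solution difference to the radiation-field difference. The only cosmetic difference is that the paper closes the absorption in $L^\infty L^6$ using $L^4L^{12}$ factors and then bootstraps to $L^5L^{10}$, while you close directly in $L^5L^{10}$; both yield the same Lipschitz estimate on the small ball.
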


\begin{remark}
  \label{rem:inverse}
  Note that we may apply the inverse function theorem, together with
  this continuity and the unitarity of the linear radiation field at
  zero, to conclude that the inverse of $\mathcal{L}_{+}$ is also
  continuous at $0$.
\end{remark}

\begin{proof}
  We rely on a small-data variant of Corollary 2 of Bahouri and
  G{\'e}rard \cite{Bahouri:1999}.  In particular, we use that there is
  a constant $C$ and an $\gamma_{0} > 0$ so that if the energy of
  the initial data is bounded by $\gamma < \gamma_{0}$, then
  $\norm[L^{5}L^{10}]{u} \leq C\gamma$.  (In the language of
  Bahouri-G{\'e}rard, this is slightly stronger than the fact that
  $A(E) \to 0$ as $E\to 0$.)

  Using the above fact and the Strichartz estimates, we know that
  solutions have bounded $L^{4}L^{12}$ norm as well.  Let us call
  $A(\gamma)$ the constant that bounds the $L^{5}L^{10}$ and
  $L^{4}L^{12}$ norms of solutions with initial data having conserved
  energy bounded by $\gamma$.  The above note implies that $A(\gamma)
  \to 0$ as $\gamma \to 0$.
  
  We now fix $\epsilon >0$.  Suppose that $u$ and $v$ are two
  solutions having initial data energy with bounded by $\gamma$.
  Using the equation, we have that $u-v$ satisfies
  \begin{equation*}
    \Box (u-v) = f(v)-f(u).
  \end{equation*}
  Using the Strichartz estimate again (this time to bound the
  $L^{\infty}L^{6}$ norm of $u-v$) yields that
  \begin{equation*}
    \norm[L^{\infty}L^{6}]{u-v} \leq C \left( \norm[L^{2}]{\grad
        (u-v)(0)} + \norm[L^{2}]{\pd[t](u-v)(0)} +
      \norm[L^{\infty}L^{6}]{u-v}\left(
        \sum_{k=0}^{4}\norm[L^{4}L^{12}]{u}^{k}\norm[L^{4}L^{12}]{v}^{4-k}
      \right)\right).
  \end{equation*}
  In particular, the last term is bounded by $C \cdot A(\gamma)^{4}
  \cdot \norm[L^{\infty}L^{6}]{u-v}$ and so, if $\gamma$ is small, we
  have that
  \begin{equation*}
    \norm[L^{\infty}L^{6}]{u-v} \leq C' \left( \norm[L^{2}]{\grad
        (u-v)(0)} + \norm[L^{2}]{\pd[t](u-v)(0)}\right),
  \end{equation*}
  and so the solution operator is a continuous map from the ball of
  radius $\gamma$ in the energy space to a small ball in
  $L^{\infty}L^{6}$.  Applying the Strichartz estimates again shows
  that it is in fact continuous to a small ball in $L^{5}L^{10}$.

  Finally, we use equation~\eqref{radf1} to see that the $L^{2}$ norm of the difference of the
  radiation fields is bounded by the initial energy of $u-v$ and
  $\norm[L^{1}L^{2}]{f(u)-f(v)}$, both of which can be made
  arbitrarily small by making the initial energy of $u-v$ small.

  The final statement (that it maps a ball of radius $\gamma$ to a
  ball of radius $\gamma$) follows from the fact that the radiation
  field is norm-preserving  \eqref{mcl-bound}.
\end{proof}

\section{ Asymptotic Completeness and the scattering operator}  
\label{sec:asymp-comp}

  Let $(\phi,\psi) \in \hone (\reals^{3}) \times L^{2}(\reals^{3})
  $ and let $F=\mcl_+(\phi,\psi) \in L^{2}(\reals \times \sphere^{2}).$ The proof of Theorem \ref{L2bound-radf}  shows that
  if $(\phi_0,\psi_0)= \mcr_{+}^{-1} F,$ and $v$ satisfies
  \begin{gather*}
  \square v=0, \\
  v(0,z)=\phi_0, \;\ \p_t v(0,z)=\psi_0,
  \end{gather*}
 and $u$ is the solution to \eqref{CP} with initial data $(\phi,\psi),$ then for every $\del>0$ there exists $T_0$ such that
 \begin{gather*}
 E( u(T_0)-v(T_0), \p_t u(T_0)-\p_t v(T_0))< \del.
 \end{gather*}

Moreover,  the forward M{\o}ller wave operator 
\begin{gather*}
\Omega_+: \hone (\reals^{3}) \times L^{2}(\reals^{3}) \longrightarrow  \hone (\reals^{3}) \times L^{2}(\reals^{3}) \\
(\phi_0,\psi_0) \longmapsto  (\phi,\psi) 
\end{gather*}
is an isomorphism.  In fact

\begin{gather*}
\Omega_+(\phi,\psi)= \mcl_+^{-1} \mcr_+(\phi_0,\psi_0) \;\  \text{ and its inverse } \;\
\Omega_+^{-1}= \mcr_+^{-1} \mcl_+
\end{gather*}

Similarly, one can define the backward wave operator
\begin{gather*}
\Omega_-(\phi_0,\psi_0)= \mcl_-^{-1} \mcr_-(\phi_0,\psi_0) \;\  \text{ and its inverse } \;\
\Omega_-^{-1}= \mcr_-^{-1} \mcl_-.
\end{gather*}
  
The scattering operator is  defined to be the map
\begin{gather*}
\mcs:  \hone (\reals^{3}) \times L^{2}(\reals^{3}) \longrightarrow  \hone (\reals^{3}) \times L^{2}(\reals^{3}) \\
\mcs(\phi,\psi)= \Omega_+^{-1}  \Omega_-(\phi,\psi)
\end{gather*}
It follows from the discussion above that $\mcs$ preserves the energy norm $E.$  Notice that
\begin{equation}
\mcs= \mcr_+^{-1} \mcl_+ \mcl_-^{-1} \mcr_-. \label{modscatmat}
\end{equation}

We will follow Friedlander's definition of  the scattering operator and take it to be the map
\begin{align}
  \mathcal{A}:  \;&L^{2}(\reals\times \sphere^{2}) \to L^{2}(\reals \times \sphere^{2}),  \;\ \;\
  \mathcal{A} = \mathcal{L}_{+}\mathcal{L}_{-}^{-1}. \label{CP-E}
\end{align}

It follows from  \eqref{mcl-bound} that
\begin{equation}
||\mca F||_{L^2(\mr\times \ms^2)}=||F||_{L^2(\mr\times \ms^2)}. \label{unit-scat-mat}
\end{equation}

  Let $A_P$ denote the antipodal map on $\ms^2,$ i.e
\begin{gather*}
  A_P:\ms^2 \longrightarrow \ms^2 \\
  A_P(\theta)=-\theta.
\end{gather*}
Using the formulas for $\mcl_{\pm}$ from \eqref{nonlinear-radf} we obtain
\begin{gather*}
  A_P \mcl_-(\phi,\psi)(s,\theta)= \mcl_-(\phi,\psi)(s,-\theta)=
  -\mcl_+(\phi,\psi) -\frac{1}{4\pi} \p_s \int_{t-\lan z,\theta\ran=s}
  f(u)(t,z) \; d\sigma.
\end{gather*}
Since $\mcl_+=\mca \mcl_-,$ we obtain
\begin{gather}
  \mca \mcl_-= -A_P \mcl_- -\frac{1}{4\pi}\p_s \int_{t-\lan
    z,\theta\ran=s} f(u)(t,z) \; d\sigma. \label{scatmat1}
\end{gather}
In general,  given a function $F\in L^2(\mr \times \ms^2)$ there exists $(\phi, \psi) \in \hone(\reals^{3}) \times L^{2}(\reals^{3})$ such that
$\mcl_-(\phi,\psi)=F,$ and a unique $u \in  C^{0}(\reals ; \hone (\reals^{3})) \cap C^{1}(\reals ; L^{2}(\reals^{3})) \cap L^{5}(\reals ; L^{10}(\reals^{3}))$ satisfying \eqref{CP}.
Then it follows from \eqref{scatmat1} that
\begin{gather}
  \mca F= -A_P F -\frac{1}{4\pi} \p_s \int_{t-\lan z,\theta\ran=s} f(u)(t,z) \; d\sigma. \label{scatmat2}
\end{gather}

\section{The radiation fields for $C_0^\infty(\mr^3)$ data}
\label{sec:radf-smooth}

We will begin by proving the following strengthening of the results of
section 3 of \cite{Grillakis:1990}:

\begin{thm}\label{radiation1} Let $u$ be the solution to \eqref{CP} with $\phi,\psi\in C_0^\infty(\mr^3).$ Let $x=\frac{1}{|z|},$ $\theta=z/|z|,$ 
  $s_+=t-\frac{1}{x}$ and $s_-= t+\frac{1}{x}.$  If  
  $v_+(x,s_+,\theta)= x^{-1} u(s_++\frac{1}{x}, \frac{1}{x}\theta)$ and
  $v_-(x,s_-,\theta)= x^{-1} u(s_--\frac{1}{x}, \frac{1}{x}\theta),$ then
  $v_{\pm}\in C^\infty([0,\infty)_x \times \reals_{s_\pm} \times \sphere^2).$
  As above, the forward and backward semilinear radiation fields are defined to be
  \begin{equation*}
    \mathcal{L}_{+} ( \phi,\psi)(s_+,\theta)= \pd[s] v_+(0,s_+,\theta) \text{  and  }
    \mathcal{L}_-(\phi, \psi )(s_-,\theta)= \pd[s] v_-(0,s_-,\theta).
  \end{equation*}
\end{thm}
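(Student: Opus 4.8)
\textbf{Proof proposal for Theorem \ref{radiation1}.}

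The plan is to reduce the smoothness of $v_\pm$ at $x=0$ to an energy estimate on high-order vector-field derivatives of $u$, exactly in the spirit of Friedlander's treatment of the linear case together with Grillakis's argument in section 3 of \cite{Grillakis:1990}, but upgraded so that no compact support of the nonlinear solution in $t$ is needed. First I would record the change of variables: in coordinates $(x,s_+,\theta)$ near null infinity the operator $\pd[t]^2-\lap$ conjugated by the weight $x^{-1}$ becomes (up to a nonvanishing factor) a b-type operator that is smooth down to $x=0$, and the semilinear term $f(u)=x^{5}\tilde f(v_+)$ with $\tilde f$ inheriting the smoothness and growth (A1)--(A4) of $f_0$; thus $v_+$ solves a semilinear equation with coefficients smooth up to the boundary $\{x=0\}$ and a nonlinearity that vanishes to high order in $x$. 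The Cauchy data $(\phi,\psi)\in C_0^\infty(\mr^3)$ translate, by finite speed of propagation, into the statement that $v_+$ has smooth, \emph{compactly supported} data on a spacelike slice lying in $\{x>0\}$, and that $v_+$ vanishes for $s_+$ sufficiently negative.

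Next I would set up the iteration that controls derivatives. Let $Z$ range over the vector fields $x\pd[x]$, $\pd[s]$, and the rotation fields on $\sphere^2$ (equivalently, the Minkowski vector fields $\pd[t]$, the rotations $\Omega_{ij}$, and the scaling/boost combinations that are tangent to the light cone). These commute with $\pd[t]^2-\lap$ up to lower-order terms, and more importantly $|Z f(u)|\lesssim |u|^4\sum|Zu|$ by (A3), so the Strichartz/energy machinery of Theorem \ref{L2bound-radf} and Theorem \ref{thm:higer-reg-rad-field} applies to each $Z^\alpha u$. Using the global bound $u\in L^5(\reals;L^{10})$ from \eqref{bg1} and the higher-regularity persistence (Theorem \ref{thm:higer-reg-NL}, in the appendix), I would prove by induction on $|\alpha|$ that $Z^\alpha u$ has finite energy for every $\alpha$, with the nonlinear term $Z^\alpha f(u)$ absorbed via H\"older exactly as in the proofs of Theorems \ref{L2bound-radf} and \ref{thm:higer-reg-rad-field} (splitting $L^1L^2$ into $L^5L^{10}$ of the top-order factor times $L^{5/4}(L^{5/2})^{1/?}$ of the four low-order factors, using that lower-order $Z^\beta u$ lie in $L^4L^{12}$ by Strichartz). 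This gives $\sup_t E_k(Z^\alpha u(t))<\infty$ for all $k,\alpha$.

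Then I would convert these global bounds into smoothness of $v_\pm$ up to $x=0$. The key point is the one already used in the proof of Theorem \ref{radiation0}: the energy flux through truncated light cones $\{t-|z|\le s_0\}$ is bounded by the total energy plus $\norm[L^1;L^2]{f(u)}$, and the same holds for each $Z^\alpha u$; rewriting these flux integrals in the $(x,s_+,\theta)$ coordinates shows that all $(x\pd[x])^j\pd[s]^k\pd[\omega]^l v_+$ lie in $L^2_{\mathrm{loc}}$ on $[0,\infty)_x\times\reals_{s_+}\times\sphere^2$, uniformly up to $\{x=0\}$; Sobolev embedding on the compact cylinder slices then yields $v_+\in C^\infty$. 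Equivalently, one can integrate the equation for $v_+$ from $x>0$ down to $x=0$ along the characteristic $\{s_+=\mathrm{const}\}$ and iterate, using that the nonlinear source $x^5\tilde f(v_+)$ is harmless near the boundary; this is the version closest to Grillakis's argument. Finally, $\mcl_+(\phi,\psi)=\pd[s]v_+(0,s_+,\theta)$ is then smooth, and it agrees with the $L^2$-definition \eqref{nonlinear-radf}--\eqref{slradf} by density and uniqueness (Theorem \ref{wellposed}).

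I expect the main obstacle to be the induction step that propagates \emph{all} the higher derivatives globally in time without compact support in $t$: one must verify that the vector fields $Z$ genuinely commute with the semilinear flow well enough that the $L^1_tL^2_z$ norm of $Z^\alpha f(u)$ is controlled by lower-order quantities already bounded, and that the small-data contraction from Theorem \ref{L2bound-radf} can be bootstrapped to the $Z$-differentiated system on $[T_0,\infty)$ (for $T_0$ chosen so the relevant $L^5W^{k,10}$ norm is small) while Shatah--Struwe persistence handles the bounded-time part $t\le T_0$. Managing the interaction between the scaling/boost vector fields (which are not Killing but only conformal Killing for the wave operator) and the non-homogeneous nonlinearity $f$ — as opposed to the exact power $|u|^4u$ — is where assumptions (A1)--(A4) do the real work, and this is the step I would write most carefully.
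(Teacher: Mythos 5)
Your strategy is genuinely different from the one the paper uses, and it has a gap at the point you would most need it to be tight.

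The paper does \emph{not} commute the equation with Klainerman-type vector fields at all. Instead, it compactifies \emph{both} null directions at once, setting $\mu=-1/s_+$, $\nu=1/s_-$, so that the evolution takes place in the bounded region $\Omega_T=[0,T]^2\times\sphere^2$ with the Cauchy data living on the diagonal $\{\mu=\nu\}$ (i.e.\ $\{t=0\}$). In those coordinates the conjugated operator is $(\mu+\nu)^2\pd[\mu]\pd[\nu]+\lap_{\sphere^2}$, the nonlinearity becomes the smooth source $\bigl(\tfrac{2\mu\nu}{\mu+\nu}\bigr)^2\tilde f(x,v)$ with $\tilde f(x,v)=x^{-5}f(xv)$, and the paper proves a self-contained linear energy estimate (Lemma~\ref{enerest}) for this characteristic Cauchy problem using the multiplier $\tfrac12(\mu+\nu)^{-2}(\pd[\mu]-\pd[\nu])\bar w$. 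It then runs an elliptic-type bootstrap: commute with $\grad_{\sphere^2}$, feed the result into the equation to produce $\pd[\mu]\pd[\nu]$-regularity, and use Sobolev embedding on the compact four-dimensional $\Omega_T$ to convert $H^k$-for-all-$k$ into $C^\infty$. Because the bootstrap happens on a compact region with genuine (not $b$-) derivatives $\pd[\mu],\pd[\nu]$, the smoothness of $v$ up to the boundary faces $\{\mu=0\}$ and $\{\nu=0\}$ — i.e.\ up to $\{x=0\}$ — comes out directly. No global-in-time commutation with boosts or scaling is needed.

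The concrete gap in your proposal is the step that turns your energy bounds into the claimed boundary regularity. You propose to show that all $(x\pd[x])^j\pd[s]^k\pd[\omega]^l v_+$ lie in $L^2_{\loc}$ uniformly up to $\{x=0\}$ and then invoke Sobolev embedding. But control of $(x\pd[x])^j$ for all $j$ is only $b$-conormal regularity: it does not give $\pd[x]$-regularity at $x=0$, so it cannot yield $v_+\in C^\infty$ up to the boundary (it would only give the radiation field as a conormal trace). You do flag the fix — ``integrate the equation for $v_+$ along $\{s_+=\text{const}\}$ and iterate'' — but that iteration is precisely the entire content of the theorem, and you leave it undeveloped. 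Relatedly, the Sobolev embedding you invoke needs a compact domain in $s_+$; the image of the forward solution is not compact in $s_+$ (the solution is not compactly supported for $t>0$), and without the $\mu$-compactification you would have to argue locality and uniformity separately using Grillakis's pointwise decay $(t^2-|z|^2)|u|\le C$. Finally, the commutation of boosts/scaling with a general nonlinearity $f(u)=uf_0(|u|^2)$ globally in $t$, which you correctly identify as the hard part of your route, is exactly what the paper sidesteps by making $\mu=\nu$ the initial surface and treating the problem as a characteristic Cauchy problem with smooth coefficients on a compact region.
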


In what follows, we write $\tilde{f}(x,v) = x^{-5}f(xv)$.  Observe
that $\tilde{f}$ is a smooth function of $x$ and $v$ and that
$x^{-1}f(xv) = x^{4}\tilde{f}(x,v)$.  We also write $\tilde{P}(x,v) =
x^{-6}P(xv)$, where $P$ is antiderivative of $f$ defined above.  Note
that $\tilde{P}$ is also smooth, $\pd[v]\tilde{P} = \tilde{f}$, and
that $\pd[x]\tilde{P} = \frac{1}{x^{6}}\left( (xv)P'(xv) -
  6P(xv)\right)$, which is nonnegative by our assumptions on $f$ and
$P$.

\begin{proof}
  Work with the forward radiation field.  This of course proves the
  result for backward one as well.  We recall two facts from
  \cite{Grillakis:1990}.  First $u\in C^\infty(\mr_+ \times \mr^3)$
  and secondly, equation (3.1) of \cite{Grillakis:1990} gives that
  \begin{equation}
    (t^2-|z|^2) |u(t,x)| \leq C(\phi,\psi). 
    \label{upper1}
  \end{equation}
  
  Since $\phi,\psi\in C_0^\infty(\mr),$ let us assume that 
  \begin{gather*}
    \phi(z)=\psi(z)=0 \text{ if } |z|\geq R.
  \end{gather*}
  Finite speed of propagation then implies that
  \begin{align}
    \begin{aligned}
      u(t,z)=0 \text{ if }  t-|z|\leq -R, \;\ \text{ and } t>0 \text{ and  } \\
      u(t,z)=0 \text{ if }  t+|z|\geq R, \;\ \text{ and } t<0.
    \end{aligned}\label{finitespeed1}
  \end{align}
  In terms of coordinates $s_+$ and $s_-$ this implies that
  \begin{gather}
    \begin{gathered}
      u(t,z)=0 \text{ if }  s_+ \leq -R, \;\ \text{ and } s_+ + s_->0 \text{ and  } \\
      u(t,z)=0 \text{ if }  s_- \geq R, \;\ \text{ and }  s_++s_- < 0.
    \end{gathered}\label{finitespeed}
  \end{gather}

  Since we are working in three dimensions, the Euclidean Laplacian written in polar coordinates 
  $(r,\theta),$ $r=|z|$ and $\theta=z/|z|$ is given by
  \begin{equation*}
    \Delta= \pd[r]^2 +\frac{2}{r} \pd[r]+ \frac{1}{r^2} \Delta_{\sphere^2}.
  \end{equation*}
  Setting $x=\frac{1}{r},$  and $u= x v$   we get that $v$ satisfies
  \begin{equation*}
    (\pd[t]^2 -(x^2\pd[x])^2 - x^2 \Delta_{\sphere^2})v + x^{-1}f(xv)=0.
  \end{equation*}
  We compactify $s_+$ and $s_-$ by setting
  \begin{equation*}
    \mu=-\frac{1}{s_+}  \text{ and } \nu=\frac{1}{s_-}.
  \end{equation*}
  In terms of $t$ and $x$ we have
  \begin{gather*}
    2t= \frac{1}{\nu}-\frac{1}{\mu}= \frac{\mu-\nu}{\mu\nu} \text{ and }   \frac{2}{x}= \frac{1}{\nu}+\frac{1}{\mu}= \frac{\mu+\nu}{\mu\nu}.
  \end{gather*}

  The Cauchy problem \eqref{CP} with initial data $\phi$ and $\psi$
  translates into (recall that $\theta$ is a variable on the unit
  sphere and $\tilde{f}(x,v) = x^{-5}f(xv)$)
  \begin{gather}
    \begin{gathered}
      \left( (\mu+\nu)^2\pd[\mu] \pd[\nu] +\Delta_{\sphere^2}\right)
      v- \left( 2\frac{\mu\nu}{\mu+\nu}\right)^{2} \tilde{f}\left(
        \frac{2\mu\nu}{\mu+\nu},v\right)=0, \text{
        in }  (0,T)\times (0,T)\times \sphere^2 \\
      v(\mu,\mu,\theta)= \phi \left(\frac{1}{\mu}\theta \right), \;\
      (\pd[\mu] v)(\mu,\mu,\theta)= \frac{1}{2} \left(
        \frac{1}{\mu^{3}} \psi \left( \frac{1}{\mu}\theta
        \right)+\frac{1}{\mu} \pd[\mu] \phi\left( \frac{1}{\mu}\theta
        \right) - \frac{1}{\mu^{2}}\phi \left(\frac{1}{\mu}\theta
        \right) \right).
    \end{gathered} 
    \label{CP2}
  \end{gather}
  Here we used that $\pd[t] u = x \pd[t] v= \frac{2\mu\nu}{\mu+\nu}
  (\mu^2\pd[\mu]-\nu^2\pd[\nu]) v$ and that $\pd[\mu]
  v(\mu,\mu,\theta)= (\pd[\mu] v)(\mu,\mu,\theta)+ (\pd[\nu]
  v)(\mu,\mu,\theta).$ This also implies that
  \begin{equation}
    \pd[\nu] v(\mu,\mu,\theta)= \frac{1}{2} \left( \frac{1}{\mu}
      \pd[\mu]\left( \phi \left( \frac{1}{\mu} \theta \right)\right) -
      \frac{1}{\mu^{2}} \phi \left( \frac{1}{\mu}\theta \right) -
      \frac{1}{\mu^{3}} \psi \left( \frac{1}{\mu}\theta \right)\right).
    \label{cond1}
  \end{equation}
  
  Equations \eqref{upper1} and \eqref{finitespeed} translate into
  \begin{align}
    \begin{aligned}
      |v(\mu,\nu,\theta)| \leq C (\mu+\nu) \text{ and } \\
      v=0 \text{ if } \mu\leq \frac{1}{R} \text{ and } \nu\leq \frac{1}{R}. 
    \end{aligned}\label{finitespeed2}
  \end{align}

  First we obtain the following energy estimates:
  \begin{lemma}
    \label{enerest} 
    Let $\mu_0>0$ and let $w \in C^\infty((0,T)\times (0,T) \times
    \sphere^2),$ supported in $\{\mu\geq \mu_0\}\cup \{\nu\geq
    \mu_0\}$, and let $G \in L^\infty((0,T)\times (0,T)
    \times \sphere^2)$ and $F \in L^2((0,T)\times (0,T) \times
    \sphere^2)$ be such that
    \begin{gather}
      \begin{gathered}
        \left( (\mu+\nu)^2\pd[\mu] \pd[\nu] +\Delta_{\sphere^2}\right) w - G(\mu,\nu,\theta) w=F(\mu,\nu,\theta), \text{ and we denote } \\
        w(\mu,\mu,\theta)= w_0(\mu,\theta), \;\ (\pd[\mu] w)(\mu,\mu,\theta)= w_1(\mu,\theta).
      \end{gathered}  \label{diffweq}
    \end{gather}
    For $0<a<b<T,$ let 
    \begin{gather}
      \begin{gathered}
        \Omega_{ab}=\{ (\mu,\nu):  \nu\geq \mu_0, \;\ a \leq \mu \leq \nu \leq b\}, \text{ and denote its boundaries by}\\
        \Sigma_1= \{ (\mu,\nu): \mu=a,  \max{(\mu_0, a)} \leq \nu\leq b\}, \;\
        \Sigma_2= \{ (\mu,\nu): \nu=b, a \leq \mu\leq b\}, \\
        \Sigma_3= \{ (\mu,\nu): \nu=\mu, \max{(a,\mu_0)} \leq \mu\leq b\} \\
        \Sigma_{\mu_0}= \emptyset \text{ if }  \mu_0<a \text{ and } \Sigma_{\mu_0}= \{ \nu=\mu_0, \;\ a \leq \mu \leq \mu_0\}, \text{ if } \mu_0\geq a
      \end{gathered}      \label{regionab}
    \end{gather}
    
    Then there exists a constant $C$  which depends on $\mu_0$, $T$, and on $||G||_{L^\infty}$ such that
    \begin{align}
      \begin{aligned}
        \norm[L^{2}(\Omega_{ab})]{\pd[\nu]w}^{2} &+
        \norm[L^{2}(\Omega_{ab})]{\pd[\mu]w}^{2} +
        \norm[L^{2}(\Omega_{ab})]{\grad_{\sphere^{2}}w}^{2} \\
        \leq & C \left( E(w_{0},w_{1}) + R(w, \mu_{0}) +
          \norm[L^{2}(\Sigma_{3}\times \sphere^{2})]{w}^{2} +
          \norm[L^{2}(\Omega_{ab}\times \sphere^{2})]{F}^{2}\right),
      \end{aligned}   \label{ineq}
    \end{align}
    where 
    \begin{align*}
      E(w_0,w_1)= \frac{1}{2} \int_{\Sigma_3 \times \sphere^2} \left(
        |w_1|^2 +
        |\pd[\nu] w_0|^2+ \frac{1}{2} \mu^{-2}|\nabla_{\sphere^2} w_0|^2\right) \; d\mu d\theta \text{ and } \\
      R(w,\mu_0)= 0 \text{ if } a\geq \mu_0, R(w,\mu_0)=
      \norm[L^2(\Sigma_{\mu_0} \times \sphere^2)]{\pd[\mu]w}^{2}
      +\norm[L^{2}(\Sigma_{\mu_{0}}\times \sphere^{2})]{(\mu +
        \mu_{0})^{-1}\grad_{\sphere^{2}}w}^{2}, \text{ if } a<\mu_0.
    \end{align*}

    Moreover, if we let $a = 0, b = T$, then we may remove
    $\norm[L^{2}]{w}^{2}$ from the right side:
    \begin{align}
      \begin{aligned}
        \norm[L^{2}(\Omega_{0T})]{\pd[\nu]w}^{2} &+
        \norm[L^{2}(\Omega_{0T})]{\pd[\mu]w}^{2} +
        \norm[L^{2}(\Omega_{0T})]{\grad_{\sphere^{2}}w}^{2} \\
        \leq & C \left( E(w_{0},w_{1}) + R(w, \mu_{0}) +
          \norm[L^{2}(\Omega_{0T}\times \sphere^{2})]{F}^{2}\right).
      \end{aligned}  
    \end{align}
  \end{lemma}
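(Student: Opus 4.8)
The plan is to run the standard characteristic (double-null) energy method for the wave operator $\square_{\mu\nu}:=(\mu+\nu)^{2}\pd[\mu]\pd[\nu]+\lap_{\sphere^{2}}$, viewing $\mu,\nu$ as the two null coordinates and $\Sigma_{3}=\{\mu=\nu\}$ as the Cauchy surface carrying $(w_{0},w_{1})$. Since $\lap_{\sphere^{2}}$ is elliptic on the sphere, I would first integrate the angular variable out. Multiplying \eqref{diffweq} by $\tfrac{2}{(\mu+\nu)^{2}}\pd[\mu]w$, integrating over $\sphere^{2}$, and integrating by parts in $\theta$ to move one derivative off of $\lap_{\sphere^{2}}w$ yields the pointwise-in-$(\mu,\nu)$ identity
\[
  \pd[\nu]\int_{\sphere^{2}}|\pd[\mu]w|^{2}\dsigma
  -\pd[\mu]\!\left(\frac{1}{(\mu+\nu)^{2}}\int_{\sphere^{2}}|\grad_{\sphere^{2}}w|^{2}\dsigma\right)
  =\frac{2}{(\mu+\nu)^{3}}\int_{\sphere^{2}}|\grad_{\sphere^{2}}w|^{2}\dsigma
  +\frac{2}{(\mu+\nu)^{2}}\int_{\sphere^{2}}(F+Gw)\,\pd[\mu]w\,\dsigma ,
\]
together with its mirror image (interchange $\mu\leftrightarrow\nu$). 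The conformal weight $(\mu+\nu)^{-2}$ is chosen precisely so that, when the angular flux term is restricted to $\Sigma_{3}$, it produces the $\tfrac14\mu^{-2}$ weight appearing in $E(w_{0},w_{1})$; and since $\mu_{0}\le\mu+\nu\le 2T$ on $\Omega_{ab}$, this weight and its derivatives are two-sidedly bounded, which is the source of the dependence of $C$ on $\mu_{0}$ and $T$.

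Each identity is the $(\mu,\nu)$-divergence of a vector field whose flux through $\{\nu=\mathrm{const}\}$ is $\int_{\sphere^{2}}|\pd[\mu]w|^{2}$ and through $\{\mu=\mathrm{const}\}$ is $-(\mu+\nu)^{-2}\int_{\sphere^{2}}|\grad_{\sphere^{2}}w|^{2}$ (symmetrically for the mirror). I would integrate the first identity over the truncated regions $\Omega_{a\nu'}$, $\mu_{0}\le\nu'\le b$, and apply the divergence theorem. On $\Sigma_{3}$, after expressing the transverse derivative $\pd[\nu]w$ there through $w_{0}$ and $w_{1}$, the boundary terms assemble into $E(w_{0},w_{1})$; on $\Sigma_{\mu_{0}}$ the tangential derivative $\pd[\mu]w$ and the $(\mu+\mu_{0})^{-1}$-weighted angular derivative appear, giving $R(w,\mu_{0})$; and the fluxes through the characteristic edges $\Sigma_{1}=\{\mu=a\}$ and $\Sigma_{2}=\{\nu=b\}$ — the ``future'' part of $\partial\Omega$ relative to evolution from $\Sigma_{3}\cup\Sigma_{\mu_{0}}$ — are nonnegative: the $\Sigma_{1}$-flux is dropped from the left, while the $\Sigma_{2}$-type flux at level $\nu'$, integrated in $\nu'$, is exactly $\|\pd[\nu']w\|_{L^{2}(\Omega_{ab})}^{2}$... i.e. $\|\pd[\mu]w\|_{L^{2}(\Omega_{ab})}^{2}$, the bulk quantity being estimated. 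The symmetric argument (truncating in $\mu$) gives $\|\pd[\nu]w\|_{L^{2}(\Omega_{ab})}^{2}$, and integrating the $\{\mu=\mathrm{const}\}$-flux of $(\mu+\nu)^{-2}|\grad_{\sphere^{2}}w|^{2}$ in $\mu$ and using $\mu+\nu\le 2T$ gives $\|\grad_{\sphere^{2}}w\|_{L^{2}(\Omega_{ab})}^{2}$. The error terms are of two kinds: the lower-order angular term $\int(\mu+\nu)^{-3}|\grad_{\sphere^{2}}w|^{2}$, and the source term $\int(\mu+\nu)^{-2}(F+Gw)\pd w$, the latter split by Cauchy--Schwarz into $\|F\|_{L^{2}}^{2}$, $\|G\|_{L^{\infty}}^{2}\|w\|_{L^{2}}^{2}$ and a piece absorbed on the left. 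The zeroth-order term $\|w\|_{L^{2}(\Omega_{ab})}^{2}$ is handled by a Poincar\'e inequality along the $\mu$-characteristics: for $\mu\ge\mu_{0}$ write $w(\mu,\nu)=w_{0}(\mu)+\int_{\mu}^{\nu}\pd[\nu']w\,d\nu'$ (and start from $\Sigma_{\mu_{0}}$ when $\mu<\mu_{0}$), which produces $\|w\|_{L^{2}(\Sigma_{3}\times\sphere^{2})}^{2}$ on the right plus a $\|\pd[\nu]w\|_{L^{2}}^{2}$ contribution that is absorbed.

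Because the three bulk quantities and the angular error are coupled, the estimate is closed by running the above on the family $\{\Omega_{a\nu'}\}$ and applying Gr\"onwall's inequality in $\nu'$; here it is essential that every error term is bounded by a constant multiple — depending only on $\mu_{0}$, $T$, $\|G\|_{L^{\infty}}$ — of the energy fluxes at lower levels, which is exactly what the bounds $\mu_{0}\le\mu+\nu\le 2T$ deliver. This gives \eqref{ineq}. For the improved estimate one takes $a=0$, $b=T$: the $\mu$-characteristics now reach $\mu=0$, and the only place $(\mu+\nu)^{-2}$ could degenerate is the corner $\mu=\nu=0$, which lies in $\{\mu<\mu_{0}\}\cap\{\nu<\mu_{0}\}$ where $w\equiv 0$ by hypothesis; hence the Poincar\'e step runs from $\mu=0$ with no boundary contribution, a weighted Hardy inequality bounds $\|w\|_{L^{2}(\Omega_{0T})}^{2}$ by $\|\pd[\mu]w\|_{L^{2}(\Omega_{0T})}^{2}$ alone, and the $\|w\|_{L^{2}(\Sigma_{3})}^{2}$ term drops out.

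I expect the real work to be bookkeeping rather than any isolated hard estimate: choosing the multiplier so that the $\Sigma_{3}$- and $\Sigma_{\mu_{0}}$-boundary terms reproduce $E(w_{0},w_{1})$ and $R(w,\mu_{0})$ with exactly the stated weights, verifying that the characteristic fluxes through $\Sigma_{1}$ and $\Sigma_{2}$ carry the favorable sign given the orientation of $\Omega_{ab}$, and organizing the chain of Cauchy--Schwarz absorptions and the Gr\"onwall so that the constant depends only on $\mu_{0}$, $T$, $\|G\|_{L^{\infty}}$ and so that the zeroth-order term reduces precisely to $\|w\|_{L^{2}(\Sigma_{3}\times\sphere^{2})}^{2}$ — and disappears entirely when $a=0$, $b=T$.
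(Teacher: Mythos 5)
Your proposal runs the same energy/multiplier method the paper uses, so the architecture matches; the differences are cosmetic or organizational. You take the two characteristic multipliers $\frac{2}{(\mu+\nu)^{2}}\pd[\mu]\bar w$ and its mirror separately and sum, whereas the paper multiplies once by $\frac{1}{2}(\mu+\nu)^{-2}(\pd[\mu]-\pd[\nu])\bar w$; these produce the same flux identity up to rearrangement, and both land the angular term on $\Sigma_{3}$ with the correct $(2\mu)^{-2}=\tfrac14\mu^{-2}$ weight. The paper closes the estimate by integrating the boundary-flux inequalities over the endpoints $a,b$, taking $b_{0}-a_{0}$ so small that the bulk terms on the right are absorbed, and then extending to $b-a<T$ by a continuity/supremum argument (the $M=\sup\{m: \dots\}$ step). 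Your Gr\"onwall in $\nu'$ on the nested regions $\Omega_{a\nu'}$ is a legitimate alternative with the same inputs (all constants controlled by $\mu_{0}\le\mu+\nu\le 2T$ and $\|G\|_{L^{\infty}}$); it sidesteps the paper's bootstrap but requires equally careful bookkeeping of which flux is the Gr\"onwall ``level'' quantity, which you have not fully spelled out but which is plausible.

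The one place where your argument, as written, has a real gap is the last step ($a=0$, $b=T$). You claim the Poincar\'e/Hardy step ``runs from $\mu=0$ with no boundary contribution'' and that $\|w\|_{L^{2}(\Omega_{0T})}^{2}\lesssim\|\pd[\mu]w\|_{L^{2}(\Omega_{0T})}^{2}$ alone. But the support hypothesis only forces $w\equiv0$ on $\{\mu<\mu_{0}\}\cap\{\nu<\mu_{0}\}$; it does \emph{not} force $w(0,\nu,\theta)=0$ for $\nu\geq\mu_{0}$, so integrating in $\mu$ from the face $\Sigma_{1}=\{\mu=0\}$ does produce a nonzero boundary contribution, and your claimed Hardy bound is false in general. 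The paper's fix is to integrate along the \emph{diagonal} from the corner $(0,0)$ (the one point you correctly observed is killed by the support hypothesis): write
\[
  w(\mu,\mu,\theta)=\int_{0}^{\mu}\bigl(\pd[\mu]w+\pd[\nu]w\bigr)(s,s,\theta)\,ds,
\]
apply Cauchy--Schwarz, and integrate in $\mu,\theta$. This gives $\|w\|_{L^{2}(\Sigma_{3}\times\sphere^{2})}^{2}\leq C(T)\,E(w_{0},w_{1})$, so the $\|w\|_{L^{2}(\Sigma_{3})}^{2}$ term on the right-hand side of \eqref{ineq} is subsumed into $C\,E(w_{0},w_{1})$ and drops out. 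You had the right ingredient; the integral needs to travel along $\Sigma_{3}$ from the corner, not across $\Sigma_{1}$.
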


  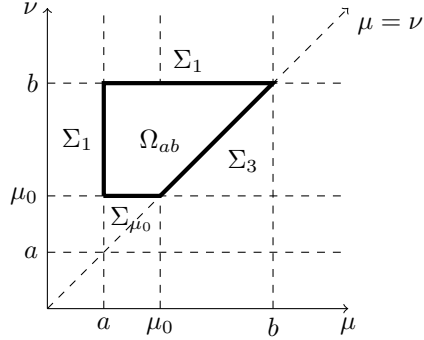
\begin{figure}[htp]
    \centering
    \begin{tikzpicture}
      \draw[->] (0,0) -- (4,0) node [anchor=north]{$\mu$};
      \draw[->] (0,0) -- (0,4) node [anchor=east]{$\nu$};
      \draw[->, dashed] (0,0) -- (4,4) node [anchor=north west]{$\mu = \nu$};

      \draw[dashed] (1.5,0) node [anchor=north]{$\mu_{0}$}-- (1.5,4);
      \draw[dashed] (0,1.5) node [anchor=east]{$\mu_{0}$} -- (4,1.5);

      \draw[dashed] (0.75, 0) node[anchor=north]{$a$} -- (0.75, 4);
      \draw[dashed] (0, 0.75) node[anchor=east]{$a$} -- (4, 0.75);
      \draw[dashed] (3,0) node[anchor=north]{$b$} -- (3,4);
      \draw[dashed] (0,3) node[anchor=east]{$b$} -- (4,3);

      \draw [ultra thick, fill=white] (0.75, 1.5) -- (1.5,1.5) node[pos=0.5,
      anchor=north]{$\Sigma_{\mu_{0}}$} -- (3,3) node[pos=0.5, anchor =
      north west]{$\Sigma_{3}$} -- (0.75, 3) node[pos=0.5,
      anchor=south]{$\Sigma_{1}$} -- (0.75, 1.5) node[pos=0.5,
      anchor=east]{$\Sigma_{1}$};

      \node[anchor=north] at (1.5,2.5) (label){$\Omega_{ab}$};

    \end{tikzpicture}
    \caption{The region $\Omega_{ab}$.}
    \label{fig:omegaab}
  \end{figure}

  \begin{proof}
    We multiply equation~\eqref{diffweq} by
    $\frac{1}{2}(\mu+\nu)^{-2}(\pd[\mu]-\pd[\nu]) \overline{w}$ and
    add the result to its complex conjugate to obtain the following
    identity
    \begin{align}
      \begin{aligned}
        \frac{1}{2} \pd[\nu]\left[ |\pd[\mu] w|^2 +
          (\mu+\nu)^{-2}|\nabla_{\sphere^2} w|^2 \right]- \frac{1}{2}
        \pd[\mu]\left[|\pd[\nu] w|^2 +
          (\mu+\nu)^{-2}|\nabla_{\sphere^2} w|^2  \right]+ \\
        \frac{1}{2}\diver_{\sphere^2}\left[ \left(
            (\mu+\nu)^{-2}(\pd[\mu]-\pd[\nu]) \overline{w}\right)
          \nabla_{\sphere^2} w + \left((\mu+\nu)^{-2}(\pd[\mu] -
            \pd[\nu])w\right)\grad_{\sphere^{2}}\overline{w}\right] \\
        = \frac{1}{2}(\mu + \nu)^{-2}\left( G
          w(\pd[\mu]-\pd[\nu])\overline{w} + \overline{Gw}(\pd[\mu] -
          \pd[\nu])w\right)+ \frac{1}{2}(\mu + \nu)^{-2}\left(
          F(\pd[\mu]-\pd[\nu])\overline{w} + \overline{F}(\pd[\mu] -
          \pd[\nu])w\right).
      \end{aligned}      \label{multid}
    \end{align}

    Then we integrate \eqref{multid} in $\Omega_{ab}\times \sphere^2$ and we find
    \begin{align}
      \begin{aligned}
        \frac{1}{2} \int_{\Sigma_1\times \sphere^2} \left[ |\pd[\nu]
          w|^2 + (\mu+\nu)^{-2}|\nabla_{\sphere^2} w|^2 \right]\; d\nu
        d\theta+ \frac{1}{2} \int_{\Sigma_2\times \sphere^2} \left[
          |\pd[\mu] v|^2 + (\mu+\nu)^{-2}|\nabla_{\sphere^2} v|^2
        \right]\; d\mu
        d\theta  \\
        = -\Re\int_{\Omega_{ab}\times \sphere^2 } G w
        (\mu+\nu)^{-2}(\pd[\mu]-\pd[\nu])\overline{w} \; d\mu d\nu
        d\theta + \Re\int_{\Omega_{ab}\times \sphere^2 } F
        (\mu+\nu)^{-2}(\pd[\mu]-\pd[\nu])\overline{w} \; d\mu d\nu
        d\theta \\
        \quad + E(w_0,w_1) + R(w,\mu_0).
      \end{aligned}      \label{energy1}
    \end{align}

    If we write
    \begin{equation*}
      w(\mu,\nu,\theta)=w(\nu,\nu,\theta) - \int_{\mu}^{\nu} \pd[s] w(s,\nu,\theta) \; ds,
    \end{equation*}
    and the Cauchy--Schwarz inequality, we obtain
    \begin{gather*}
      |w(\mu,\nu,\theta)|^{2} \leq 2 |w(\nu,\nu,\theta)|^2+ 2(\nu-\mu)\int_{\mu}^\nu |\p_s w(s,\nu,\theta)|^2 \; ds.
    \end{gather*}
    Integrating this inequality in $\Omega_{ab}$ we have
    \begin{equation}
      \norm[L^{2}(\Omega_{ab}\times \sphere^{2})]{w}^{2} \leq 2(b-a) \left(
        \norm[L^{2}(\Sigma_{3}\times \sphere^{2})]{w}^{2}+
        \norm[L^{2}(\Omega_{ab}\times \sphere^{2})]{\pd[\mu]w}^{2} \right). 
      \label{CS1}
    \end{equation}
    Again the Cauchy--Schwarz inequality gives
    \begin{align}
      \begin{aligned}
        \left|\int_{\Omega_{ab}\times \sphere^2 } G w
          (\mu+\nu)^{-2}(\pd[\mu]-\pd[\nu])\overline{w} \; d\mu d\nu
          d\theta\right|\leq \frac{\mu_0^{-2}}{8} \norm[L^{\infty}]{G}
        \left( \norm[L^{2}(\Omega_{ab}\times\sphere^{2})]{w}^{2} +
          \norm[L^{2}(\Omega_{ab}\times\sphere^{2})]{(\pd[\mu]-\pd[\nu])w}^{2} \right), \\
        \left|\int_{\Omega_{ab}\times \sphere^2 } F
          (\mu+\nu)^{-2}(\pd[\mu]-\pd[\nu])\overline{w} \; d\mu d\nu
          d\theta\right|\leq \frac{\mu_0^{-2}}{8}\left(
          \norm[L^{2}(\Omega_{ab}\times\sphere^{2})]{F}^{2}+
          \norm[L^{2}(\Omega_{ab}\times\sphere^{2})]{(\pd[\mu]-\pd[\nu])w}^{2}\right).
      \end{aligned} \label{CS2}
    \end{align}
    We then deduce from \eqref{energy1}, \eqref{CS1} and \eqref{CS2}
    that there exists $C$ which depends only on $\mu_{0}$, $T$, and $||G||_{L^\infty}$ such that
    \begin{align*}
      \int_{\Sigma_2\times \sphere^2} \left[|\pd[\mu] w|^2+
        |\nabla_{\ms^2} w|^2\right] \; d\mu d\theta \leq &C (
      \norm[L^{2}(\Omega_{ab}\times \sphere^{2})]{\pd[\mu]w}^{2} +
      \norm[L^{2}(\Omega_{ab}\times\sphere^{2})]{\pd[\nu]w}^{2} +
      \norm[L^{2}(\Sigma_{3}\times \sphere^{2})]{w}^{2} \\
      &\quad\quad+E(w_0,w_1)
      + \norm[L^{2}(\Omega_{ab}\times\sphere^{2})]{F}^{2} + R(w,\mu_0)),  \\
      \int_{\Sigma_1\times \sphere^2} \left[|\pd[\nu]
        w|^2+|\nabla_{\ms^2} w|^2 \right] \; d\nu d\theta \leq & C (
      \norm[L^{2}(\Omega_{ab}\times \sphere^{2})]{\pd[\mu]w}^{2} +
      \norm[L^{2}(\Omega_{ab}\times\sphere^{2})]{\pd[\nu]w}^{2} +
      \norm[L^{2}(\Sigma_{3}\times \sphere^{2})]{w}^{2}\\
      &\quad\quad+E(w_0,w_1)
      + \norm[L^{2}(\Omega_{ab}\times\sphere^{2})]{F}^{2} + R(w,\mu_0)).
    \end{align*}
    Integrating the first estimate in $a$ for $a_0\leq a \leq b\leq b_{0},$ and the second one in $b$ for
    $\mu_0 \leq b \leq b_{0}$ we deduce that

    \begin{align}
      \begin{aligned}
        \norm[L^{2}(\Omega_{a_{0}b_{0}}\times\sphere^{2})]{\grad _{\sphere^{2}}w}^{2}
        +
        \norm[L^{2}(\Omega_{a_{0}b_{0}}\times\sphere^{2})]{\pd[\mu]w}^{2}
        &+
        \norm[L^{2}(\Omega_{a_{0}b_{0}}\times\sphere^{2})]{\pd[\nu]w}^{2}
        \leq \\        
        C(b_{0}-a_{0}) \bigg(
        &\norm[L^{2}(\Omega_{a_{0}b_{0}}\times \sphere^{2})]{\pd[\mu]w}^{2} +
        \norm[L^{2}(\Omega_{a_{0}b_{0}}\times\sphere^{2})]{\pd[\nu]w}^{2} +
        \norm[L^{2}(\Sigma_{3}\times \sphere^{2})]{w}^{2} \\
        &+E(w_0,w_1)
        + \norm[L^{2}(\Omega_{a_{0}b_{0}}\times\sphere^{2})]{F}^{2} + R(w,\mu_0)\bigg)
      \end{aligned}
    \end{align}
    If we take $b_{0}$ such that $C(b_{0}-a_0)<\ha,$ we obtain the
    desired inequality in the region $\Omega_{a_0b_{0}}$.  Note that
    this estimate depends only on the length $b_{0}-a_{0}$ and so
    applies to all $\Omega_{a_{0}b_{0}}$ (with the same constant) as
    long as $b_{0}-a_{0} < \frac{1}{2C}$.
    
    To finish the proof, we claim that a similar estimate holds for
    all $b_{0}-a_{0}< T$.  Let $M$ be given by
    \begin{equation}
      \label{eq:def-of-m}
      M = \sup \left\{ m < T: \text{ estimate~\eqref{ineq}
          holds uniformly for all } b_{0}
        - a_{0} \leq m \right\}.
    \end{equation}
    We claim that $M=T$.  Our estimate above shows that $M > 0$.  We
    now show that if $M<T$, then there is an $\epsilon > 0$ so that
    estimate~\eqref{ineq} holds uniformly for $b_{0} - a_{0} \leq
    M+\epsilon$, contradicting~\eqref{eq:def-of-m}.

    Let $C_{0}$ be such that if $0 < a < b < T$ and $b-a\leq M$, then
    equation~\eqref{ineq} holds with constant $C_{0}$.  Suppose $a_{0} <
    a_{1} < b_{0} < b_{1}$ with $b_{0} - a_{0} = b_{1} -a_{1}=M$.  Let
    $R$ be the rectangular region given by
    \begin{equation*}
      \Omega_{a_{0}b_{1}} \setminus \Omega_{a_{0}b_{0}} \cup \Omega_{a_{1}b_{1}}.
    \end{equation*}
    By repeating the above argument, we find that 
    \begin{align*}
      \norm[L^{2}(R\times\sphere^{2})]{\grad _{\sphere^{2}}w}^{2}
      +
      \norm[L^{2}(R\times\sphere^{2})]{\pd[\mu]w}^{2}
      &+
      \norm[L^{2}(R\times\sphere^{2})]{\pd[\nu]w}^{2}
      \leq \\        
      C(b_{0}-a_{0}) \bigg(
      &\norm[L^{2}(\Omega_{a_{0}b_{1}}\times \sphere^{2})]{\pd[\mu]w}^{2} +
      \norm[L^{2}(\Omega_{a_{0}b_{1}}\times\sphere^{2})]{\pd[\nu]w}^{2} +
      \norm[L^{2}(\Sigma_{3}\times \sphere^{2})]{w}^{2} \\
      &+E(w_0,w_1)
      + \norm[L^{2}(\Omega_{a_{0}b_{1}}\times\sphere^{2})]{F}^{2} +
      R(w,\mu_0)\bigg) .
    \end{align*}
    We now use that
    \begin{equation*}
      \norm[L^{2}(\Omega_{a_{0}b_{1}})]{\cdot}^{2} \leq
      \norm[L^{2}(\Omega_{a_{0}b_{0}})]{\cdot}^{2} +
      \norm[L^{2}(\Omega_{a_{1}b_{1}})]{\cdot}^{2} +
      \norm[L^{2}(R)]{\cdot}^{2}
    \end{equation*}
    to obtain an estimate on $R\times \sphere^{2}$.  Adding this to
    the estimates on $\Omega_{a_{0}b_{0}}$ and $\Omega_{a_{1}b_{1}}$
    yields an estimate for $\Omega_{a_{0}b_{1}}$ that is uniform in
    $a_{0}$ and $b_{1}$, provided that $b_{1} - b_{0}$ and
    $a_{1}-a_{0}$ are small.  This implies that~\eqref{ineq} holds
    uniformly in $a$ and $b$ provided $b-a\leq M+\epsilon$, a
    contradiction.

    The last statement of the lemma follows from a Poincar{\'e}-type
    inequality.  Indeed, we write $w(\mu, \mu , \theta) = \int
    _{0}^{\mu}(\pd[\mu]w + \pd[\nu]w)(s,s,\theta) \ds$, apply
    Cauchy--Schwarz, and integrate to bound
    $\norm[L^{2}(\tilde{\Sigma}_{3}\times \sphere^{2})]{w}^{2}$.
  \end{proof}

  Now we return to the semilinear wave equation and we multiply
  \eqref{CP2} by $\frac{1}{2}(\mu+\nu)^{-2}(\pd[\mu]-\pd[\nu])
  \overline{v}$ and add the result to its complex conjugate to obtain
  the following identity:
  \begin{align}
    \begin{aligned}
      \frac{1}{2} \pd[\nu]\left[ |\pd[\mu] v|^2 +(\mu+\nu)^{-2}
        |\nabla_{\sphere^2} v|^2 +
        4 (\mu\nu)^2(\mu+\nu)^{-4}\tilde{P}(x,v)\right]- \\
      \frac{1}{2} \pd[\mu]\left[ |\pd[\nu] v|^2 +(\mu+\nu)^{-2}
        |\nabla_{\sphere^2} v|^2 +
        4 (\mu\nu)^2(\mu+\nu)^{-4}\tilde{P}(x,v)\right]+ \\
      4 (\mu\nu)(\nu-\mu)(\mu+\nu)^{-4} \left( \tilde{P} +
        \frac{\mu\nu}{\mu + \nu}(\pd[x]\tilde{P})\right) +
      \Re\diver_{\sphere^2}\left[ \left(
          (\mu+\nu)^{-2}(\pd[\mu]-\pd[\nu]) \overline{v}\right)
        \nabla_{\sphere^2} v\right] =0
    \end{aligned}\label{multid2}
  \end{align}
  We then integrate \eqref{multid2} in $\Omega_{ab}\times \sphere^2$:
  \begin{align*}
    \frac{1}{2} \int_{\Sigma_1\times \sphere^2} (\mu+\nu)^{-2}\left(
      (\mu+\nu)^{2}|\pd[\nu] v|^2 + |\nabla_{\sphere^2} v|^2 +
      (\frac{2\mu\nu}{\mu+\nu})^{2} \tilde{P}(x,v)\right)\; d\nu d\theta+\\
    \frac{1}{2} \int_{\Sigma_2\times \sphere^2} (\mu+\nu)^{2}\left(
      (\mu+\nu)^{2}|\pd[\mu] v|^2 + |\nabla_{\sphere^2} v|^2 +
      (\frac{2\mu\nu}{\mu+\nu})^{2}\tilde{P}(x,v)\right)\; d\mu d\theta \\
    + \int_{\Omega_{ab}\times \sphere^2}\left[ 4
      (\mu\nu)(\nu-\mu)(\mu+\nu)^{-4} \left( \tilde{P}(x,v) +
        \frac{\mu\nu}{\mu+ \nu}(\pd[x]\tilde{P})(x,v)\right)\right] \;
    d\mu d\nu
    d\theta=\\
    \int_{\Sigma_3 \times \sphere^2} (2\mu)^{-2}\left( 4
      \mu^2|\pd[\mu] \phi_0|^2 +4 \mu^{-4}| \phi_1|^2+
      |\nabla_{\sphere^2} \phi_0|^2+ 
      \mu^{-2}\tilde{P}(x,\phi_0)\right) \; d\mu d\theta
  \end{align*}
  In particular this implies that $\pd[\mu] v, \pd[\nu] v,
  \nabla_{\sphere^2} v\in L^2(\Omega_{ab} \times \sphere^2)$ and so
  $v\in H^{1}$.
  
  Let $\Omega _{T} = [0,T]\times [0,T] \times \sphere^{2}$.  We now
  show by induction that $v\in H^{k}(\Omega_{T})$ for all $k$.  We
  know from the above that $v\in H^{1}(\Omega_{T})$.  Suppose now for
  induction that $v\in H^{k}(\Omega_{T})$.  In particular, $\pd
  ^{k}v\in L^{2}(\Omega_{T})$.  The dimension of $\Omega_{T}$ is $4$
  and so we may then use Sobolev embedding to see that $\pd^{k-1}v \in
  L^{4}(\Omega_{T})$, $\pd^{k-2}v\in L^{p}(\Omega_{T})$ for any $p\in
  (1,\infty)$, and $\pd^{\alpha}v\in L^{\infty}(\Omega_{T})$ for all
  $0 \leq |\alpha| \leq k-3$.  (We know already that $v\in
  L^{\infty}(\Omega_{T})$.)

  Let $F = \left( \frac{2\mu\nu}{\mu + \nu} \right)^{2}$.  We
  differentiate equation~\eqref{CP2} $k$ times with respect to the
  angular variables to find that
  \begin{equation*}
    (\mu + \nu)^{2}\pd[\mu]\pd[\nu]\grad_{\sphere^{2}}^{k}v +
    \lap_{\sphere^{2}}\grad_{\sphere^{2}}^{k} v = F
    \grad_{\sphere^{2}}^{k} \tilde{f}(x,v).
  \end{equation*}
  By using Sobolev embedding we ensure that the right hand side is in
  $L^{2}(\Omega_{T})$, so Lemma~\ref{enerest} implies that
  $\grad_{\sphere^{2}}^{k}v \in H^{1}$.  In particular,
  $\lap_{\sphere^{2}}\grad_{\sphere^{2}}^{k-1}v \in L^{2}(\Omega_{T})$
  and so we may differentiate equation~\eqref{CP2} $k-1$ times in
  $\theta$ to find that $\pd[\mu]\pd[\nu]\grad_{\sphere^{2}}^{k-1}v
  \in L^{2}$.  We now differentiate equation~\eqref{CP2} $k-1$ times
  in $\theta$ and once in $\mu$ (or $\nu$) to see that if $w_{k-1,1} =
  \grad^{k-1}_{\sphere^{2}}\pd[\mu]v$, then 
  \begin{equation*}
    (\mu + \nu)^{2}\pd[\mu]\pd[\nu]w_{k-1,1} + 2(\mu +
    \nu)\pd[\nu]w_{k-1,1} + \lap_{\sphere^{2}}w_{k-1,1} =
    \pd[\mu]\grad^{k-1}_{\sphere^{2}}\left( F\tilde{f}(x,v)\right).
  \end{equation*}
  We know already that $\pd[\nu]w_{k-1,1}\in L^{2}$, and the right
  hand side is in $L^{2}$ by Sobolev embedding and the induction
  hypothesis, so $w_{k-1,1}\in H^{1}$.  In particular,
  $\lap_{\sphere^{2}}\grad^{k-2}_{\sphere^{2}}\pd[\mu] v \in L^{2}$
  and so $\pd[\nu]\pd[\mu]^{2}\grad^{k-2}_{\sphere^{2}}v\in L^{2}$.
  We may continue in this fashion to see that $v\in H^{k+1}$.  

  We now have that $v\in H^{k}(\Omega_{T})$ for all $k$.  $\Omega_{T}$
  is bounded and so in fact $v$ is smooth on the closure of
  $\Omega_{T}$.

\end{proof}

\section{ Support theorems for semilinear radiation fields} \label{sec:suppthm}

We will prove  a support theorem for radiation fields with radial initial data.  Finite speed of propagation says that if $u$ is a Shatah-Struwe solution of \eqref{CP},   and if  $\phi$ and $ \psi$ are  supported in $r \leq R,$  then $\mathcal{L}_+(0, \psi)$ is supported in $s \geq -R$.  We are interested in the converse of this statement.   We begin by considering the case of the linear equation.  It is well known, see \cite{helgason:1999}, that there exist $\psi\in L^2(\mr^3)$ not supported in $|z|\leq R,$ but such that  $\mcr(0,\psi,0)=0$ if $|s|\geq R.$  However  if $\psi$ is radial we have the following result, which can be found in \cite{helgason:1999}:

\begin{prop}
  \label{radial-data1} 
  Suppose $\psi \in L^2(\mr^3)$ is a radial function and
  $\mcr_+(0,\psi,0)(s,\theta)=0$ if $s\leq -R.$ Then, $\psi(z)=0$ if
  $|z|\geq R.$  
  
  If $\phi \in \hone (\reals^{3})$ is a radial function,
  $\mcr_{+}(\phi , 0 , 0 )(s,\theta) = 0$ for $|s| \geq R$, and $\int
  _{\reals}\mcr_{+}(\phi, 0, 0)(s,\theta) \ds = 0$, then $\phi (z) =
  0$ for $|z| \geq R$.
\end{prop}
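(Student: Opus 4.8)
The plan is to use the Radon--transform formula \eqref{formula1} to reduce the radial radiation fields of $\psi$ and of $\phi$ to one--dimensional transforms of the radial profiles, and then to read off the support statements directly. Write $\psi(z)=\psi_0(|z|)$ and $\phi(z)=\phi_0(|z|)$. Since $\psi$ is radial, its Radon transform is independent of $\theta$ and, in $\mr^3$, is given by $R\psi(s)=2\pi\int_{|s|}^{\infty}\psi_0(r)\,r\,dr$ (and similarly for $\phi$), so that $\pd[s]R\psi(s)=-2\pi\,s\,\psi_0(|s|)$. Substituting this into \eqref{formula1} with $f\equiv 0$ gives
\begin{equation*}
  \mcr_+(0,\psi,0)(s,\theta)=\ha\,s\,\psi_0(|s|),\qquad \mcr_+(\phi,0,0)(s,\theta)=\pd[s]\!\left(\ha\,s\,\phi_0(|s|)\right),
\end{equation*}
both independent of $\theta$. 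These identities are checked first for radial $\psi,\phi\in C_0^\infty(\mr^3)$, where $R\psi$ and $R\phi$ are smooth, and then extended to radial $\psi\in L^2(\mr^3)$ and $\phi\in\hone(\mr^3)$ by density, using that $\mcr_+$ is bounded on the relevant spaces (Theorem~\ref{radiation0}), that $\psi_0\mapsto s\,\psi_0(|s|)$ is bounded $L^2_{\mathrm{rad}}(\mr^3)\to L^2(\mr)$, and that $\phi_0\mapsto s\,\phi_0(|s|)$ is bounded $\hone_{\mathrm{rad}}(\mr^3)\to\hone(\mr)$ (the latter by a Hardy inequality on $(0,\infty)$).

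For the first assertion, if $\mcr_+(0,\psi,0)(s,\theta)=0$ for $s\le -R$ then $s\,\psi_0(|s|)=0$ for $s\le-R$, hence $\psi_0(\rho)=0$ for $\rho\ge R$, that is $\psi(z)=0$ for $|z|\ge R$. (Equivalently, $\mcr_+(0,\psi,0)(\cdot,\theta)$ equals the odd function $\ha\,s\,\psi_0(|s|)$, so its vanishing for $s\le -R$ forces it to vanish for all $|s|\ge R$; this is exactly Helgason's support theorem for radial functions, cf.\ \cite{helgason:1999}.)

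For the second assertion put $\Phi(s)=s\,\phi_0(|s|)$, so that $\Phi$ is odd, lies in $\hone(\mr)$ and therefore has a continuous representative, and $\mcr_+(\phi,0,0)(\cdot,\theta)=\ha\Phi'$. The hypothesis $\mcr_+(\phi,0,0)(s,\theta)=0$ for $|s|\ge R$ says $\Phi'=0$ on $(-\infty,-R]\cup[R,\infty)$, so $\Phi\equiv c$ on $[R,\infty)$ and, by oddness, $\Phi\equiv -c$ on $(-\infty,-R]$. The remaining hypothesis then gives
\begin{equation*}
  0=\int_{\mr}\mcr_+(\phi,0,0)(s,\theta)\,ds=\ha\int_{\mr}\Phi'(s)\,ds=\ha\bigl(\Phi(+\infty)-\Phi(-\infty)\bigr)=c,
\end{equation*}
so $\Phi\equiv 0$ on $|s|\ge R$, i.e.\ $\phi_0(\rho)=0$ for $\rho\ge R$, i.e.\ $\phi(z)=0$ for $|z|\ge R$.

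The computations are short; the delicate points are (i) justifying the one--dimensional identities above at the natural regularity $L^2$, resp.\ $\hone$ — handled by the density and boundedness remarks, with Hardy's inequality in the $\phi$ case — and (ii), in the $\phi$ case, genuinely invoking the mean--zero condition $\int\mcr_+(\phi,0,0)\,ds=0$ together with the even parity of $\mcr_+(\phi,0,0)$ to remove the constant of integration. I expect (ii) to be the main subtlety: without that condition one only learns that $r\,\phi_0(r)$ is eventually constant, which is strictly weaker than compact support — for instance $\phi_0(r)=c/R$ for $r\le R$ and $\phi_0(r)=c/r$ for $r\ge R$ defines a radial $\hone(\mr^3)$ function that is not compactly supported but whose radiation field $\mcr_+(\phi,0,0)$ is supported in $[-R,R]$ and has nonzero integral.
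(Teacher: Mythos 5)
Your proof is correct and follows essentially the same route as the paper: compute the Radon transform of a radial function explicitly, reduce the radiation field to $\tfrac12 s\psi_0(|s|)$ (resp. $\tfrac12\pd[s]\bigl(s\phi_0(|s|)\bigr)$), and then read off the support, using the mean-zero hypothesis to kill the constant of integration in the $\phi$ case. The paper argues "by continuity" without spelling out the Hardy/density argument; your explicit treatment of the $\hone$ extension is a minor sharpening, not a different method, and your closing counterexample is exactly the obstruction the paper's phrase "This constant vanishes precisely when..." is pointing at.
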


\begin{proof} 
  From \eqref{formula1}
  \begin{equation*}
    \mathcal{R}_+(0,\psi,0)(s,-\theta)=-\frac{1}{4\pi} \pd[s] \int_{\langle z,-\theta\rangle=s} \psi(z) \; d\sigma(z).
  \end{equation*}

  Let us assume for a moment that $\psi$  is $C^\infty$ and that it has compact support.  We use polar coordinates to represent a point  $z$ on the plane $\langle z, \theta\rangle=-s,$ and write it as $z= -s\theta+\rho \gamma,$ where $\gamma \in \sphere^2$ is orthogonal to $\theta.$  Hence $\psi(|z|)=\psi(\sqrt{s^2+\rho^2})$ and we have
  \begin{equation*}
    \pd[s]\int_{\langle z,-\theta\rangle=s} \psi(|z|) \; d\sigma(z)=2\pi \pd[s] \int_0^\infty \rho \psi(\sqrt{s^2+\rho^2}) \; d\rho=2\pi s \int_0^\infty \pd[\rho] \psi(\sqrt{s^2+\rho^2}) \; d\rho=
    -2\pi s\psi(|s|).
  \end{equation*}
  Therefore, by continuity, we have that for $\psi \in L^2(\reals^3),$ radial,
  $\mathcal{R}_+(0,\psi,0)(s)=\ha s \psi(|s|).$  So if $\psi\in L^2(\reals^3)$ is radial and
  $\mathcal{R}_+(0,\psi,0)(s)=0$ for  $s\leq -R,$ then $\psi(z)=0$ for $|z|\geq R$.

  To prove the other statement, we know from \eqref{formula1} that
  \begin{equation*}
    \mathcal{R}_{+}(\phi, 0 , 0)(s) = -\frac{1}{4\pi} \pd[s]^{2}
    \int_{\langle z, -\theta\rangle = s} \phi (z) d\sigma (z).
  \end{equation*}
  The same argument as above shows that if $\phi$ is smooth and
  compactly supported, then
  \begin{equation*}
    \pd[s]^{2}\int_{\langle z,-\theta\rangle =s} \phi (|z|)d\sigma (z) =
    -2\pi \pd[s] \left( s\phi (|s|)\right).
  \end{equation*}
  By continuity, we thus have that if $\phi \in \hone (\reals^{3})$ is
  radial, then $\mathcal{R}_{+}(\phi, 0 , 0)(s) = \frac{1}{2}
  \pd[s]\left( s\phi (|s|)\right)$.  In particular, if
  $\mathcal{R}_{+}(\phi ,0,0)(s) = 0$ for $|s| \leq R$, then there is
  some constant $C$ so that $\phi (r) = C/r$ for $r\geq R$.  This
  constant vanishes precisely when $\int _{\reals}\mcr _{+}(\phi,
  0,0)(s)\ds$ vanishes and so $\phi (z) = 0$ for $|z|\geq R$.
\end{proof}

From this we obtain 
\begin{cor}
  \label{suppthm0} 
  Let $F\in L^2(\mr)$ be supported in $\{|s|\leq R\}$ and suppose that
  $\int _{\reals}F(s)\ds = 0$. Let $(\phi,\psi)$ be such that
  $\mcr_+(\phi,\psi, 0)=F.$ Then $\phi$ and $\psi$ are radial functions
  with finite energy and $\phi(z)=\psi(z)=0$ if $|z|\geq R.$
\end{cor}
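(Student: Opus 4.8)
The plan is to reduce everything to Proposition~\ref{radial-data1} via the isometry of the linear radiation field together with a parity argument. First, regard $F$ as an element of $L^{2}(\reals\times\sphere^{2})$ that happens to be independent of $\theta$. Since the map in~\eqref{aux3} is an isometric isomorphism, there is a unique pair $(\phi,\psi)\in\hone(\reals^{3})\times L^{2}(\reals^{3})$ with $\mcr_{+}(\phi,\psi,0)=F$, and it automatically has finite energy, $E(\phi,\psi)=\norm[L^{2}]{F}$. To see that $\phi$ and $\psi$ are radial, I would run the orthogonal-invariance argument from the proof of Proposition~\ref{control}: for any orthogonal transformation $U$ we have $F\circ U=F$ because $F$ does not depend on $\theta$, and since the wave equation is invariant under $U$, $\mcr_{+}(\phi\circ U,\psi\circ U,0)=F$ as well; uniqueness then forces $\phi\circ U=\phi$ and $\psi\circ U=\psi$.

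The key step is to split $F$ into its even and odd parts in $s$, $F=F_{e}+F_{o}$ with $F_{e}(s)=\tfrac12\bigl(F(s)+F(-s)\bigr)$ and $F_{o}(s)=\tfrac12\bigl(F(s)-F(-s)\bigr)$, and to identify these with the two constituent radiation fields. Both $F_{e}$ and $F_{o}$ lie in $L^{2}(\reals)$ and are supported in $\{|s|\le R\}$. By the computations in the proof of Proposition~\ref{radial-data1}, for radial data $\mcr_{+}(0,\psi,0)(s)=\tfrac12\,s\,\psi(|s|)$ is an odd function of $s$, while $\mcr_{+}(\phi,0,0)(s)=\tfrac12\pd[s]\bigl(s\,\phi(|s|)\bigr)$ is an even function of $s$. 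Since $\mcr_{+}(\phi,\psi,0)=\mcr_{+}(\phi,0,0)+\mcr_{+}(0,\psi,0)=F$, uniqueness of the even/odd decomposition gives $\mcr_{+}(0,\psi,0)=F_{o}$ and $\mcr_{+}(\phi,0,0)=F_{e}$. Moreover $\int_{\reals}F_{o}(s)\,ds=0$ automatically (it is odd and compactly supported, hence integrable), so $\int_{\reals}F_{e}(s)\,ds=\int_{\reals}F(s)\,ds=0$.

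It then remains to invoke Proposition~\ref{radial-data1}. Since $F_{o}$ is supported in $\{|s|\le R\}$, in particular $F_{o}(s)=0$ for $s\le -R$, so the first part of that proposition gives $\psi(z)=0$ for $|z|\ge R$. Since $F_{e}$ vanishes for $|s|\ge R$ and has $\int_{\reals}F_{e}(s)\,ds=0$, the second part gives $\phi(z)=0$ for $|z|\ge R$. The only point requiring care is the bookkeeping that the $\psi$ (respectively $\phi$) appearing in the identity $\mcr_{+}(0,\psi,0)=F_{o}$ (respectively $\mcr_{+}(\phi,0,0)=F_{e}$) is literally the same as in $\mcr_{+}(\phi,\psi,0)=F$; this is exactly what the radial reduction together with the uniqueness half of~\eqref{aux3} supplies, and I expect it to be the main (and rather minor) obstacle, the rest being a direct application of Proposition~\ref{radial-data1}.
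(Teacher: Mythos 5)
Your proposal is correct and follows essentially the same route as the paper: establish radiality via the orthogonal-invariance/uniqueness argument, split $F$ into its even and odd parts in $s$, identify $F_e=\mcr_+(\phi,0,0)$ and $F_o=\mcr_+(0,\psi,0)$, and invoke Proposition~\ref{radial-data1}. The only difference is cosmetic — you spell out the parity reasoning (odd for $\tfrac12 s\psi(|s|)$, even for $\tfrac12\pd[s](s\phi(|s|))$) that the paper's proof leaves implicit when asserting the identification.
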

\begin{proof} 
  We have already shown that $\phi$ and $\psi$ must be radial.

  $F(s)=\mcr_+(\phi,\psi,0)$ and let $F^*(s)=F(-s),$ then
  $F=F_{e}+F_{o},$ $F_{e}=\ha(F+F^*)$ and $F_{o}=\ha(F-F^*).$ Then
  $F_e= \mcr_+(\phi,0,0)$ and $F_o=\mcr_+(0,\psi,0)$ are both
  supported in $[-R,R]$ and $\int F_{e} = 0$, so by Proposition \ref{radial-data1},
  $\phi$ and $\psi$ are supported in $\{|z|\leq R\}.$ 
\end{proof}

We will prove that a weaker result holds for the semilinear equation as well.
\begin{thm}
  \label{L2suppthm} 
  Let $F\in L^2(\mr)$ be such that $F(s)=0$ for $|s|\geq R$ and $\int
  F(s)\ds = 0$. Let $(\phi,\psi)$ be such that $\mcl_+(\phi,\psi)=F.$
  Then $\phi$ and $\psi$ are radial functions with finite energy and
  compact support.
\end{thm}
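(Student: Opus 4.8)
The plan is to read off radiality and finiteness of the energy directly from Theorem~\ref{L2bound-radf}, and then to get compact support by reducing, via Corollary~\ref{suppthm0}, to a unique--continuation statement in the conformal picture of Section~\ref{sec:radf-smooth}. Since $F$ does not depend on $\omega$ it is invariant under every orthogonal transformation $U$ of $\sphere^{2}$; because \eqref{CP} and the representation \eqref{nonlinear-radf} are rotation--equivariant, $\mcl_{+}(\phi\circ U,\psi\circ U)=\mcl_{+}(\phi,\psi)=F$, so the uniqueness half of Theorem~\ref{L2bound-radf} forces $\phi\circ U=\phi$ and $\psi\circ U=\psi$, i.e. $\phi,\psi$ are radial; and $\mce(\phi,\psi)=\norm[L^{2}]{F}^{2}<\infty$ by \eqref{mcl-bound}.

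Next I would reduce to the exterior of a light cone. Let $(\phi_{0},\psi_{0})=\mcr_{+}^{-1}(F)$ and let $v$ be the free solution with this data. By Corollary~\ref{suppthm0}, $\phi_{0},\psi_{0}$ are radial and supported in $\{|z|\le R\}$, so $v$ vanishes off the solid cone $\{|z|\le|t|+R\}$. Writing $u$ for the Shatah--Struwe solution of \eqref{CP} with data $(\phi,\psi)$ and $w=u-v$, we have $\square w=-f(u)\in L^{1}L^{2}$ and, by linearity of $\mcr_{+}$ in its three arguments, $\mcr_{+}(w(0),\pd[t]w(0),-f(u))=\mcl_{+}(\phi,\psi)-\mcr_{+}(\phi_{0},\psi_{0},0)=F-F=0$. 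Thus $w$ is an inhomogeneous linear wave with $L^{1}L^{2}$ source and vanishing forward radiation field, so as in Section~\ref{sec:asymp-comp} it disperses, $E(w(t))+\norm[L^{6}]{w(t)}\to0$ as $t\to+\infty$. It now suffices to show $w\equiv0$ on the exterior region $\Gamma=\{|z|>|t|+R\}$, since then $\phi=u(0,\cdot)$ and $\psi=\pd[t]u(0,\cdot)$ vanish for $|z|>R$.

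On $\Gamma$ one has $v\equiv0$, so $w$ solves $\square w+f(w)=0$ there; in the coordinates $x=1/|z|$, $\mu=-1/(t-|z|)$, $\nu=1/(t+|z|)$ of Section~\ref{sec:radf-smooth}, $\Gamma$ is the square $\{0<\mu<1/R,\ 0<\nu<1/R\}$ whose face $\{\nu=0\}$ is the part $\{t-|z|<-R\}$ of $\mathcal{I}^{+}$, and $\til w=x^{-1}w$ satisfies the radial form of \eqref{CP2} (the $\Delta_{\sphere^{2}}$ term drops out and the nonlinear term is $O(|\til w|^{5})$). Because $\mcr_{+}(w)=0$ the radiation--field primitive $\til w|_{\nu=0}$ is constant in $s_{+}$, with value $0$ at future timelike infinity by the dispersion of $w$, so $\til w=\pd[\mu]\til w=0$ on $\{\nu=0\}\cap\Gamma$; feeding this into \eqref{CP2} and using $f(0)=0$ (hence $\til f(x,0)=0$) gives $\pd[\mu]\pd[\nu]\til w=0$ on that face, so $\pd[\nu]\til w$ is constant there too, and finiteness of the energy near spatial infinity forces this constant to vanish. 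With $\til w$ and its first derivatives vanishing on $\{\nu=0\}\cap\Gamma$, an energy estimate of the type of Lemma~\ref{enerest}, arranged so that the characteristic data are carried on $\{\nu=0\}$ rather than on the diagonal $\Sigma_{3}$, propagates $\til w\equiv0$ through $\Gamma$.

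I expect this last step to be the main obstacle: it requires the characteristic energy estimate for solutions of merely finite energy, and above all a genuine justification that $\til w$ and $\pd[\nu]\til w$ vanish on the relevant piece of $\mathcal{I}^{+}$ --- this is precisely where the three hypotheses (compact support of $F$, $\int F=0$, and asymptotic completeness) enter, as they control the behavior of $w$ at spatial and at timelike infinity. A convenient way to organize the argument is to prove the statement first for $F\in C_{0}^{\infty}$, where finite speed of propagation makes $\til w$ smooth up to $\mathcal{I}^{+}$ and manifestly zero near spatial infinity (essentially Theorem~\ref{thm:fullsupportthm}), and then obtain the general $L^{2}$ case by density, using that $\{G':G\in C_{0}^{\infty}((-R,R))\}$ is dense in $\{F\in L^{2}:\supp F\subseteq[-R,R],\ \int F=0\}$ together with the strong continuity of $\mcl_{+}$ (Proposition~\ref{prop:strong-cont}) and a weak--compactness argument to pass the uniform support bound to the limit.
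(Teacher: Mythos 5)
Your opening is exactly the paper's: radiality of $(\phi,\psi)$ from rotation invariance of $F$ and injectivity of $\mcl_{+}$, finiteness of the energy from \eqref{mcl-bound}, and the observation that $(\phi_{0},\psi_{0})=\mcr_{+}^{-1}F$ is radial and supported in $\{|z|\leq R\}$ by Corollary~\ref{suppthm0}. After that the two arguments diverge completely. The paper does not attempt any unique--continuation argument; it simply reruns the contraction--mapping construction of Theorem~\ref{L2bound-radf}, but restricted to the closed subspace $\wtb_{\delta}=B_{\delta}\cap\{w:\supp w\subset\{|z|\leq t+R,\ t\geq T_{0}\}\}$. Finite speed of propagation shows that the iteration map preserves this constraint once $(\phi_{w},\psi_{w})$ are chosen to be supported in $\{|z|\leq T_{0}+R\}$, so the fixed point gives data at $t=T_{0}$ supported in $\{|z|\leq T_{0}+R\}$; solving backward by Shatah--Struwe and using finite speed again yields $(\phi,\psi)$ supported in $\{|z|\leq R+2T_{0}\}$. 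The output is compact but deliberately non--sharp.

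Your route instead tries to prove $u\equiv0$ on the full exterior region $\Gamma=\{|z|>|t|+R\}$, which would give the sharp conclusion $\supp\phi,\supp\psi\subset\{|z|\leq R\}$. That is strictly stronger than what Theorem~\ref{L2suppthm} claims, and for $L^{2}$ data it is essentially the open problem the paper flags in the introduction (``In the linear case Theorem~\ref{supp01} holds for $\psi\in L^{2}(\mr^{3})$, radial. It is an open problem whether this remains true for the semilinear equation.''). The obstacles you yourself list --- that $\til w$ and $\pd[\nu]\til w$ vanish on the $\{\nu=0\}$ face, and a characteristic energy estimate valid for Shatah--Struwe solutions --- are exactly where the difficulty lives. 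Lemma~\ref{enerest} is stated for smooth solutions and carries its data on the diagonal $\Sigma_{3}$, not on $\mathcal{I}^{+}$; a characteristic (Goursat) version with data on $\{\nu=0\}$ plus a trace theory at $\mathcal{I}^{+}$ for finite--energy solutions is precisely what is not available, and what separates the $C_{0}^{\infty}$ result (Theorem~\ref{supportthm1}) from the $L^{2}$ one.

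The density fallback also has a gap as stated. Approximating $F$ by $F_{n}=G_{n}'\in C_{0}^{\infty}((-R,R))$ and invoking the smooth case, you would need (i) a \emph{uniform} compact--support bound on $(\phi_{n},\psi_{n})$: the sharp bound from Theorem~\ref{thm:fullsupportthm} involves $\inf\supp\mca F_{n}$, which is not controlled by $\|F_{n}\|_{L^{2}}$, while the paper's $R+2T_{0,n}$ depends on the decay rate of $v_{n}$ in $L^{5}L^{10}$ and making it uniform is a separate argument; and (ii) a way to identify the weak limit of $(\phi_{n},\psi_{n})$ with $\mcl_{+}^{-1}F$. Since $\mcl_{+}$ is nonlinear, weak--to--weak continuity is not clear (profile--decomposition phenomena are exactly the obstruction), Proposition~\ref{prop:strong-cont} only gives strong--to--strong continuity of $\mcl_{+}$, and Remark~\ref{rem:inverse} gives continuity of $\mcl_{+}^{-1}$ only near $0$. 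None of this is needed in the paper's proof, which never leaves the contraction--mapping framework.
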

\begin{proof}   
  We argue as in in Corollary \ref{suppthm0}, to show that $\phi$ and
  $\psi$ are radial.  Indeed, since $F$ does not depend on $\theta,$
  we get that $\mcl_+(\phi,\psi)= \mcl_+(U^*\phi, U^*\psi).$ By the
  injectivity of the map $\mcl_+$ we find that $\phi=U^*\phi$ and
  $\psi=U^*\psi.$

  Now we follow the construction of the data $(\phi,\psi)$ in the proof of Theorem \ref{L2bound-radf}.   We know that there exists $(\phi_0,\psi_0)$ with finite energy such that $\mcr_+(\phi_0,\psi_0,0)=F.$   We also know from Corollary \ref{suppthm0} that $\phi_0, \psi_0$ are radial and supported in $\{|z|\leq R\}.$

  Let $v$ satisfy \eqref{CP0} with initial data $(\phi_0,\psi_0)$ and $f=0.$  Let  $\del>0$ and let $T_0$ be such that \eqref{choiceofdelta} holds. By finite speed of propagation, $v(T_0,z)$ is supported in the ball $\{|z|\leq T_0+R\}.$ Now we modify the definition of the space $B_\delta$ to control the support of the solution, and set 
  \begin{equation*}
    \wtb_\del= B_\del \cap \{w: w(t,z)  \text{ is supported in }  t-T_0 \geq |z|-R-T_0, \;\ t>0\}.
  \end{equation*}
  It follows that $\wtb$ is also a closed Banach subspace of
  $L^5([T_0,\infty); L^{10}(\mr^3)),$ and therefore the same iteration
  scheme of the proof of Theorem \ref{L2bound-radf} goes through, we may
  pick $\phi_w,\psi_w$ to be supported in $\{|z| \leq T_{0}+R\}$ and we
  find a solution $u$ to \eqref{auxCP1} with data supported on $\{|z|
  \leq T_{0}+R\}.$ Again, as in the proof of Theorem \ref{L2bound-radf} we
  use the result of Shatah-Struwe to solve the semilinear Cauchy problem
  backwards.  By finite speed of propagation we find that the initial
  data $(\phi,\psi)= ( u(0,z), \p_t u(0,z))$ is supported in $\{|z|\leq
  R+2T_{0}\}$ and $\mcl(\phi,\psi)=F.$
\end{proof}

\begin{thm}
  \label{thm:smoothsuppthm}
  Let $F\in C^{\infty}(\reals)$ be such that $F(s) = 0$ for $|s| \geq
  R$ and $\int _{\reals} F(s) \ds = 0$.  Let $(\phi, \psi)$ be such
  that $\mcl_{+} (\phi, \psi) = F$.  Then $\phi$ and $\psi$ are smooth
  and compactly supported.
\end{thm}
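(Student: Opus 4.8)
The plan is to deduce Theorem~\ref{thm:smoothsuppthm} from Theorem~\ref{L2suppthm} together with the higher-regularity statement Theorem~\ref{thm:higer-reg-rad-field} by a bootstrap argument. First, Theorem~\ref{L2suppthm} already gives that $\phi$ and $\psi$ are radial, have finite energy, and are compactly supported (here the hypotheses $F(s)=0$ for $|s|\geq R$ and $\int_\mr F(s)\ds=0$ are exactly those needed to invoke it). So it remains only to upgrade the regularity of $\phi$ and $\psi$ to $C^\infty$.

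To do this I would regard $F\in C_0^\infty(\mr)$ as a ($\theta$-independent) function on $\mr\times\ms^2$. Since a radial function on $\mr\times\ms^2$ has the same Sobolev regularity on the cylinder as on $\mr$, we have $F\in H^k(\mr\times\ms^2)$ for every $k$. By the second claim of Theorem~\ref{thm:higer-reg-rad-field}, for each $k$ there is a pair $(\phi_k,\psi_k)\in\tilde{H}^{k+1}(\mr^3)\times H^k(\mr^3)$ with $\mcl_+(\phi_k,\psi_k)=F$. By the injectivity of $\mcl_+$ established in Theorem~\ref{L2bound-radf}, $(\phi_k,\psi_k)=(\phi,\psi)$ for every $k$, so in fact $(\phi,\psi)\in\tilde{H}^{k+1}(\mr^3)\times H^k(\mr^3)$ for all $k$.

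Finally I would conclude smoothness by Sobolev embedding: $\psi\in L^2(\mr^3)$ lies in $H^k(\mr^3)$ for all $k$, hence $\psi\in C^\infty(\mr^3)$; and $\phi\in\hone(\mr^3)\hookrightarrow L^6(\mr^3)$ together with $\grad^j\phi\in L^2(\mr^3)$ for all $j\geq 1$ gives $\phi\in H^k_{\loc}(\mr^3)$ for all $k$, hence $\phi\in C^\infty(\mr^3)$ as well. Combined with the compact support from Theorem~\ref{L2suppthm}, this finishes the proof. The one point requiring care is precisely the identification of the higher-regularity data produced by Theorem~\ref{thm:higer-reg-rad-field} with the finite-energy data produced by Theorem~\ref{L2suppthm}: this is where injectivity of $\mcl_+$ is essential, since the two theorems construct their solutions by a priori different fixed-point schemes. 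Beyond this bookkeeping I do not expect a genuine obstacle.
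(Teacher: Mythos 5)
Your proof is correct and takes essentially the same route as the paper: the paper's proof also obtains the smoothness by running the fixed-point iteration with the $L^{5}W^{k,10}$ norms (exactly the content of Theorem~\ref{thm:higer-reg-rad-field}), together with persistence of regularity and uniqueness of the Shatah--Struwe solution, and combines that with the compact-support conclusion already obtained in Theorem~\ref{L2suppthm}. The only difference is organizational: you invoke Theorem~\ref{thm:higer-reg-rad-field} directly and glue the two conclusions together via the injectivity of $\mathcal{L}_{+}$ from Theorem~\ref{L2bound-radf}, whereas the paper reruns the support-controlling iteration of Theorem~\ref{L2suppthm} in the higher-regularity spaces in a single pass; both are the same underlying argument.
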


\begin{proof}
  We know already that $F = \mcl_{+}(\phi, \psi)$, where $\phi$ and
  $\psi$ are radial, have finite energy, and are compactly supported.
  It remains to show that $\phi$ and $\psi$ are smooth.

  We know that there are smooth compactly supported
  radial functions $\phi _{0}$ and $\psi _{0}$, supported in $\{
  |z|\leq R\}$, such that $F = \mathcal{R}_{+}(\phi_{0}, \psi_{0},
  0)$.  By applying the iteration scheme from Theorem~\ref{L2suppthm}
  with the $L^{5}W^{k,10}$ norm (which is finite by
  Proposition~\ref{thm:higher-reg-NL}), and then using persistence of
  regularity and uniqueness,
  we find that $(\phi , \psi) \in \tilde{H}^{k+1} \times H^{k}$.  This
  is true for all $k$, so $\phi$ and $\psi$ are smooth.
\end{proof}

\begin{remark}
  \label{rem:scattering-op-is-also-cpct-supp}
  A consequence of Theorem~\ref{thm:smoothsuppthm} (and the finite
  speed of propagation) is that for such $F$, $\mathcal{A}F$ is
  smooth, radial, and vanishes for $s$ sufficiently negative.
\end{remark}

If we know more about the regularity of the initial data, we may drop
some of the hypothesis on the function $F$ from Theorem
\ref{L2suppthm} and control the support of the initial data.

\begin{thm}
  \label{supportthm1}   
  Let $\psi(z)=\psi(|z|)\in C_0^\infty(\reals^3)$ be a radial
  function. If $\mcl_+(0,\psi)(s)=0$ if $s \leq -R$, then $\psi(z)=0$
  if $|z| \geq R.$ Moreover, if $(\phi, \psi) \in
  C_{0}^{\infty}(\reals^{3})$ are radial functions, and both
  $\mathcal{L}_{\pm}(\phi, \psi) (s) = 0$ for $s \leq -R$, then both
  $\phi(z)=0$ and $\psi(z) = 0$ for $|z|\geq R$.
\end{thm}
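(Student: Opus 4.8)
The plan is to prove the first (radial $\psi$) assertion — which is the same as Theorem~\ref{supp01} — and then deduce the second. For the first, let $\rho_0$ be the radius of the support of $\psi$; the claim is that $\rho_0\le R$, and I argue by contradiction, assuming $\rho_0>R$. Let $u$ solve \eqref{CP} with data $(0,\psi)$; by uniqueness and rotational invariance $u$ is radial. First I would pass to the compactified characteristic variables $(\mu,\nu)$ of Theorem~\ref{radiation1}, writing $v(\mu,\nu,\theta)=|z|\,u(t,z)$ with $\mu=(|z|-t)^{-1}$ and $\nu=(|z|+t)^{-1}$; then $v$ is smooth on $[0,T]^2\times\sphere^2$, the diagonal $\{\nu=\mu\}$ carries the Cauchy data, and the face $\{\nu=0\}$ carries the forward radiation field. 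Since $v$ is radial, $\grad_{\sphere^2}v\equiv 0$, so \eqref{CP2} becomes a semilinear characteristic equation, and the bound $|f(u)|\le c_2|u|^5$ together with $\tfrac{4\mu^2\nu^2}{(\mu+\nu)^4}\le\tfrac14$ gives the pointwise estimate $|\pd[\mu]\pd[\nu]v|\le c_2|v|^5$. Finite speed of propagation becomes, via \eqref{finitespeed2}, $v\equiv 0$ on $[0,1/\rho_0]^2$; since $\phi=0$, $v\equiv 0$ on the whole diagonal; and since $v_+(0,s,\theta)\to 0$ as $s\to-\infty$ (again by finite speed), integrating $\mcl_+(0,\psi)(s)=\pd[s]v_+(0,s,\theta)$ turns the hypothesis into $v(\mu,0)=0$ for $0\le\mu\le 1/R$.

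The core of the argument is then a bootstrap on the triangles $T_\sigma=\{\,1/\rho_0\le\mu\le\sigma,\ 0\le\nu\le\mu\,\}$, $\sigma\in[1/\rho_0,1/R]$. Applying the product representation for solutions of $\pd[\mu]\pd[\nu]v=\Phi$ — based at $(1/\rho_0,0)$ when $\nu\le 1/\rho_0$, and at the diagonal point $(\nu,\nu)$, where $v=0$, when $\nu>1/\rho_0$ — together with $v(\mu,0)=0$, one obtains, for every $(\mu,\nu)\in T_{1/R}$,
\begin{equation*}
  |v(\mu,\nu)|\ \le\ C\iint_{T_\mu}|v(\mu',\nu')|^{5}\,d\mu'\,d\nu'.
\end{equation*}
Writing $P(\sigma)=\sup_{T_\sigma}|v|$, this yields $P(\sigma)\le C\,|T_\sigma\setminus T_{\sigma^\sharp}|\,P(\sigma)^5$ whenever $P(\sigma^\sharp)=0$; since $P(1/\rho_0)=0$ and $P$ is continuous, the area factor tends to $0$ as $\sigma\downarrow\sigma^\sharp$, so $P\equiv 0$ on all of $[1/\rho_0,1/R]$. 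Hence $v\equiv 0$ on $T_{1/R}$, and with $v\equiv 0$ on $[0,1/\rho_0]^2$ this gives $v\equiv 0$ on $\{0\le\nu\le\mu\le 1/R\}$; inspecting $\pd[\mu]v$ on the diagonal then forces $\psi(1/\mu)=0$ for $\mu<1/R$, i.e.\ $\psi(z)=0$ for $|z|>R$, contradicting $\rho_0>R$. The step I expect to be most delicate is precisely this bootstrap closure: it works only because the nonlinearity is genuinely superlinear ($|v|^5$ with $5>1$), so that near the cone $\{\mu=1/\rho_0\}$ on which $v$ vanishes the nonlinear feedback is negligible; a linear perturbation would not close the estimate — and, indeed, the second assertion is false for the linear equation.

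For the second assertion the scheme is the same, except that now the data $(\phi,\psi)$ has $\phi$ possibly nonzero, so $v$ equals $\phi(1/\mu)$ on the diagonal instead of vanishing there, and the forward condition alone does not close the argument. The missing input is the backward condition $\mcl_-(\phi,\psi)(s)=0$ for $s\le -R$. In the linear theory the forward and backward radiation fields of radial data satisfy $\mcr_-=-\mcr_+$ (by \eqref{formula1} and the fact that the Radon transform of a radial function does not depend on direction), so the two conditions are then equivalent, and, unlike the two-sided hypothesis of Corollary~\ref{suppthm0}, insufficient on their own; but the nonlinear corrections $N_\pm$ in \eqref{nonlinear-radf} — integrals of $H(\pm t)f(u)$ over different null planes — break this symmetry, and, read through the appropriate (time-reversed) compactification, the backward condition supplies the vanishing of $v$ on the complementary null face. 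With $v$ vanishing on both null faces out to parameter value $1/R$ and on the finite-speed corner $[0,1/\rho]^2$, $\rho$ the larger of the two support radii, the product representation based at the corner $(0,0)$ gives $|v(\mu,\nu)|\le C\iint_{[0,\mu]\times[0,\nu]}|v|^5$ on all of $[0,1/R]^2$, and the same continuity argument forces $v\equiv 0$ there; restricting to the Cauchy slice then forces both $\phi(z)=0$ and $\psi(z)=0$ for $|z|\ge R$. The additional obstacle here is bookkeeping — aligning the forward and backward compactifications so that both radiation-field hypotheses can be read off on one region — after which the analysis reduces to that of the first part.
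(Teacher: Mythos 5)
Your proposal reaches the correct conclusion, but the route is genuinely different from the paper's and is, in the version you present, more elementary.

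The paper's argument is: exploit the oddness of $u$ in $t$ (which follows from $\phi=0$ and $f$ odd) to conclude $v(\mu,\nu)=-v(\nu,\mu)$, so the vanishing on one null face propagates to the other; then differentiate the equation in $t$ to linearize, apply a unique continuation theorem for linear second-order hyperbolic operators with bounded coefficients across the ``timelike'' surface $\mu+\nu=\rho^{-1}$, and finally use the domain-of-dependence structure (hyperbolicity of $\partial_r^2-\partial_t^2$) to pull the vanishing back to the initial slice in an iterative shrinking of $\rho$. Your argument replaces the unique continuation theorem and the oddness trick entirely: instead of generating vanishing on the second null face, you use the vanishing on the diagonal $\{\mu=\nu\}$ (which comes directly from $\phi=0$), apply the Goursat-type identity
\begin{equation*}
v(\mu,\nu)=v(\nu,\nu)+v(\mu,0)-v(\nu,0)+\int_{\nu}^{\mu}\!\!\int_{0}^{\nu}\pd[\mu]\pd[\nu]v\,d\nu'\,d\mu'
\end{equation*}
in the triangle $\{0\le\nu\le\mu\le 1/R\}$, and close a superlinear Gronwall bootstrap using $|\pd[\mu]\pd[\nu]v|\le C|v|^5$. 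This trades the deep (Tataru/Robbiano--Zuily-type) unique continuation machinery for a calculus identity together with the superlinearity of the nonlinearity, and is self-contained modulo Theorem~\ref{radiation1} (smoothness of $v$ up to the corner), which both approaches need. Your observation that the bootstrap would \emph{not} close for a linear (or even merely Lipschitz) perturbation is exactly the right explanation for why this shortcut is available here but not in the classical linear theory. A small arithmetic slip: the pointwise bound should be $|\pd[\mu]\pd[\nu]v|\le\tfrac{c_2}{4}|v|^5$ (the factor $4\mu^2\nu^2/(\mu+\nu)^4\le\tfrac14$ appears as a prefactor, not something that cancels), but this does not affect anything.

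Two comments on completeness. First, you should say explicitly that once $v\equiv 0$ on the closed region $\{0\le\nu\le\mu\le 1/R\}\cup[0,1/\rho_0]^2$, smoothness of $v$ forces the one-sided derivative $\pd[\mu]v(\mu,\mu)$ (taken from the lower triangle) to coincide with the full derivative, which is what lets you read off $\psi$ from \eqref{CP2}. Second, for the part with general $(\phi,\psi)$, the claim that ``the backward condition supplies the vanishing of $v$ on the complementary null face'' needs the same careful translation between the $s_-$-parametrization and the compactified variable on $\{\mu=0\}$ that the paper also elides; the paper simply states that the additional condition on $\mathcal{L}_-$ replaces the role of $\phi=0$ in guaranteeing vanishing on both faces. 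Your sketch mirrors the paper's here, so I will not count this as a gap, but if you were to write this out in full you would need to make the coordinate bookkeeping for $\mathcal{L}_-$ as explicit as you did for $\mathcal{L}_+$.
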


\begin{proof} 
  The proof is be an application of unique continuation results and
  is based on the methods of \cite{Sa-Barreto:2003}.  The counter-examples of
  Alinhac \cite{alinhac:1983} and  Alinhac and Baouendi \cite{alinhac:1995},
  indicate that this result is unlikely to be true if $\psi$ is not
  assumed to be radial, but this is an open problem.
  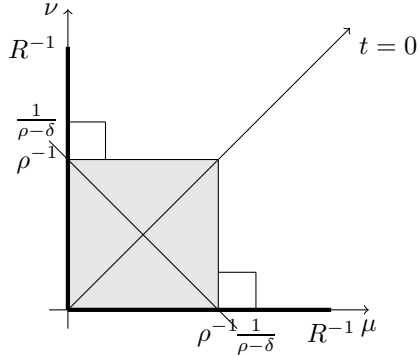
\begin{figure}[htp]
    \centering
    \begin{tikzpicture}
      \draw [->] (-0.25,0) -- (4,0) node[anchor=north](muaxis){$\mu$};
      \draw [->] (0, -0.25) -- (0,4) node[anchor=east](nuaxis){$\nu$};
      
      \fill [gray!20!white] (0,0) rectangle (2,2);

      \draw [ultra thick] (0,0) -- (3.5,0) node[anchor=north](muR){$R^{-1}$};
      \draw [ultra thick] (0,0) -- (0,3.5) node[anchor=east](nuR){$R^{-1}$};

      \draw [-] (2,0) node[anchor=north](murho){$\rho^{-1}$} -- (2,2);
      \draw [-] (0,2) node[anchor=east](nurho){$\rho^{-1}$} -- (2,2);

      \draw [-] (2.5,0) node[anchor=north]{$\frac{1}{\rho - \delta}$}-- (2.5,.5);
      \draw [-] (2,.5) -- (2.5,.5);
      \draw [-] (0,2.5) node[anchor=east]{$\frac{1}{\rho-\delta}$}-- (.5,2.5);
      \draw [-] (.5,2) -- (.5,2.5);

      \draw [->] (0,0) -- (3.75,3.75) node[anchor=north
      west](diag){$t=0$};
      
      \draw (-0.25, 2.25) -- (2.25, -0.25);

    \end{tikzpicture}
    \caption{The regions in Theorem~\ref{supportthm1}.  The function $v$
      vanishes in the grey box and along the dark lines.  The line $\mu
      + \nu = \rho ^{-1}$ is the surface on which the unique continuation
      argument is applied.}
    \label{fig:regions-supp-mu-nu}
  \end{figure}

  Let us assume that  $\phi=0$ and $\psi(z)=0$ if $|z|\geq\rho>R.$ By finite speed
  of propagation, the solution to \eqref{CP} satisfies
  \eqref{finitespeed1} and \eqref{finitespeed} with $R$ replaced by
  $\rho.$ Since $u$ is radial, then from \eqref{CP2}, we find that
  $v(\mu,\nu)$ is $C^\infty$ and satisfies
  \begin{gather}
    \begin{gathered}
      (\mu+\nu)^2\pd[\mu] \pd[\nu]  v- \left(
        2\frac{\mu\nu}{\mu+\nu}\right)^{2} \tilde{f}(x,v)=0, \text{ in }
      (0,T)\times (0,T)  \\ 
      v(\mu,\mu,\theta)= 0 \;\ (\pd[\mu] v)(\mu,\mu,\theta)=
      \frac{1}{2} \mu^{-3} \psi(\mu\theta).
    \end{gathered}    \label{CP3}
  \end{gather}
  
  By assumption $\mcl_+(0,\psi)(s)=0$ for $s\leq -R.$ Then in
  coordinates $(\mu,\nu)$ this implies that $v,$ the solution to
  \eqref{CP3} with data $(0,\mu^{-3}\psi)$ satisfies
  \begin{gather}
    v(0,\nu)=0, \;\ \nu\leq \frac{1}{R}. \label{region1mu}
  \end{gather}
  Since the non-linearity is odd, we have that the solution $v$ to \eqref{CP} is also odd.  In coordinates $(\mu,\nu)$ this implies that $v(\mu,\nu)=-v(\nu,\mu),$ and therefore we conclude that 
    \begin{gather}
    v(\mu,0)=0, \;\ \mu\leq \frac{1}{R}. \label{region1mu1}
  \end{gather}
  
  Now substituting this into \eqref{CP3}, and using that $v(\mu,\nu)=0$
  near $\mu=\nu=0,$ we find that in fact for every $k\in \mn,$
  \begin{gather*} 
    \p_\mu^k v(0,\nu)=0, \;\ \nu\leq \frac{1}{R} \text{ and } \p_\nu^k v(\mu,0)=0, \;\ \mu\leq \frac{1}{R}.
  \end{gather*}
  Therefore we can extend $v$ as $v=0$ if  
  \begin{gather}
    \{\mu<0, \nu\leq \frac{1}{R}\} \cup \{\nu<0, \mu\leq \frac{1}{R}\}. \label{region2mu}
  \end{gather}
  $v=0$ in the union of the regions \eqref{region1mu} and
  \eqref{region2mu}.  We want to use a unique continuation result to
  guarantee that $v=0$ in a neighborhood of $(0,\rho^{-1})$ and
  $(\rho^{-1},0).$ We need to transform \eqref{CP3} into a linear
  equation.  As above, we differentiate \eqref{CP} with respect to $t$
  and we get that $w=\p_t v$ satisfies in coordinates $(\mu,\nu)$
  \begin{align*}
    (\mu+\nu)^2 \p_\mu\p_\nu w&-W_{1}w - W_{2}\overline{w} =0, \text{
      with } \\
    W_{1} &= \left( \frac{2\mu\nu}{\mu +
        \nu}\right)^{2}x^{-4}\left[ f_{0}(|xv|^{2}) +
      f_{0}'(|xv|^{2})|xv|^{2}\right] \text{ and } \\
    W_{2} &=
    \left( \frac{2\mu\nu}{\mu + \nu}\right)^{2u}x^{-4}\left[ f_{0}'(|xv|^{2})(xv)^{2}\right].
  \end{align*}
  Note that in the above, our assumptions on $f$ (and hence $f_{0}$)
  imply that $x^{-4}f_{0}(|xv|^{2})$ and
  $x^{-4}f_{0}'(|xv|^{2})|xv|^{2}$ are smooth functions of $x$ and
  $v$.

  Using that $w=0$ in the union of the regions \eqref{region1mu} and
  \eqref{region2mu}, then in particular $w=0$ if $\mu+\nu\leq
  \rho^{-1}$, and this implies that $w=0$ near $(0,\rho^{-1})$ and
  $(\rho^{-1},0).$ Let us say, $w(\mu,\nu)=0$ if $\mu< \delta$ and
  $\nu\leq \frac{1}{\rho-\del}$ and similarly if $\nu<\delta$ and
  $\mu\leq \frac{1}{\rho-\del}.$ In terms of the variables $s_+$ and
  $s_-$ this implies that
  \begin{gather*}
    w=0 \text{ if }  s_+\leq \del-\rho,   \;\ s_- \geq \frac{1}{\del}, \\
    w=0 \text{ if }  s_-\geq  \rho-\del,   \;\ s_+ \leq -\frac{1}{\del}.
  \end{gather*}
  In particular this implies that
  \begin{gather*}
    w=0 \text{ if } r  \leq \frac{1}{\del}, \;\ \del-\rho-r \leq t \leq r + \del-\rho 
  \end{gather*}
  Using the hyperbolicity of $\p_r^2-\p_t^2$ with respect to $r$ or
  $t$ gives that the initial data vanishes if $r\geq \rho-\del.$
  Proceeding this way, we find that $\psi$ is supported in $r\leq R.$

  The argument with initial data $(\phi, \psi)$ is nearly
  identical---the additional condition on $\mathcal{L}_{-}(\phi,
  \psi)$ acts as a replacement for the assumption that $\phi = 0$ in
  order to guarantee that both $\mathcal{L}_{\pm}(\phi, \psi)(s)$
  vanish for $s \leq -R$.  The rest of the proof proceeds with minimal
  changes.
\end{proof}

Putting the above together proves Theorem~\ref{thm:fullsupportthm}.

\appendix

\section{Persistence of regularity}
\label{sec:pers-regul}

In this section we outline a proof of persistence of regularity and
show that the $L^{5}W^{k,10}$ and $L^{4}W^{k,12}$ norms of solutions
of equation~\eqref{CP} are bounded.  We believe this is well-known but we
include it for completeness.




As before, we define the higher energy spaces $\tilde{H}^{k+1}$ and $H^{k}$
for $k \geq 1$ with the following norms
\begin{align*}
  \norm[\tilde{H}^{k}]{u}^{2} &=\sum _{j=1}^{k}
  \norm[\dot{H}^{k}]{u}^{2}, \\
  \norm[H^{k}]{u}^{2} &= \norm[\tilde{H}^{k}]{u}^{2} + \norm[L^{2}]{u}^{2}.
\end{align*}
The $\tilde{H}^{k}$ norms omit the $L^{2}$ portion of the standard inhomogeneous
Sobolev norms.

By commuting the wave operator with powers of the Laplacian we 
obtain the following energy and Strichartz estimates.
\begin{prop}
  \label{prop:higher-order-energy-strichartz}
  Suppose that $\Box u = f\in L^{1}H^{k}$ and $(u, \pd[t]u)|_{t=0} =
  (\phi, \psi) \in \tilde{H}^{k+1}\times H^{k}$.  Then $u$ satisfies the
  following estimate:
  \begin{equation*}
    \norm[L^{\infty}\tilde{H}^{k+1}]{u} +
    \norm[L^{\infty}H^{k}]{\pd[t]u} + \norm[L^{5}W^{k,10}]{u} +
    \norm[L^{4}W^{k,12}]{u} \leq C \left( \norm[\tilde{H}^{k+1}]{\phi} +
      \norm[H^{k}]{\psi} + \norm[L^{1}H^{k}]{f}\right)
  \end{equation*}
\end{prop}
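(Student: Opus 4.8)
\emph{Proof proposal.} The plan is to commute spatial derivatives through the equation and apply the $k=0$ estimates --- the energy inequality from the proof of Theorem~\ref{radiation0} together with the Ginibre--Velo Strichartz estimate of Theorem~\ref{GV} --- term by term. The structural point that makes this work cleanly is that \emph{every} norm appearing on the left-hand side of the asserted inequality involves only \emph{spatial} derivatives of $u$ (and of $\pd[t]u$); hence it suffices to commute the purely spatial derivatives $\pd[\beta]$ with $\Box$, which is immediate since $\Box$ has constant coefficients, and in particular one never needs to differentiate $f$ in $t$.

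First I would reduce, by density, to the case in which $\phi$, $\psi$, and $f$ are smooth and compactly supported (in space, and for $f$ also in time), so that $u\in C^\infty$ and all the computations below are justified; since the right-hand side of the claimed inequality controls its left-hand side, the general statement then follows by approximation. Next, for each multi-index $\beta$ with $0\le|\beta|\le k$, set $w_\beta=\pd[\beta]u$. Because $\Box$ commutes with $\pd[\beta]$, the function $w_\beta$ solves $\Box w_\beta=\pd[\beta]f$ with Cauchy data $(w_\beta,\pd[t]w_\beta)|_{t=0}=(\pd[\beta]\phi,\pd[\beta]\psi)$. The energy identity used in the proof of Theorem~\ref{radiation0}, applied to $w_\beta$, then yields for every $t$ the bound $\norm[L^{2}]{\grad_z\pd[\beta]u(t)}+\norm[L^{2}]{\pd[t]\pd[\beta]u(t)}\le C\big(\norm[L^{2}]{\grad_z\pd[\beta]\phi}+\norm[L^{2}]{\pd[\beta]\psi}+\norm[L^{1};L^{2}]{\pd[\beta]f}\big)$, and Theorem~\ref{GV} applied to $w_\beta$ with $(q,r)=(5,10)$ and with $(q,r)=(4,12)$ yields $\norm[L^{5}L^{10}]{\pd[\beta]u}+\norm[L^{4}L^{12}]{\pd[\beta]u}\le C\big(\norm[L^{2}]{\grad_z\pd[\beta]\phi}+\norm[L^{2}]{\pd[\beta]\psi}+\norm[L^{1};L^{2}]{\pd[\beta]f}\big)$, using that $\pd[t]w_\beta(0)=\pd[\beta]\psi$.

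Finally I would sum these two families of estimates over $0\le|\beta|\le k$ and take the supremum in $t$ where appropriate. On the left, as $\beta$ ranges over $|\beta|\le k$ the quantities $\grad_z\pd[\beta]u$ range over all spatial derivatives of $u$ of orders $1$ through $k+1$, so the left-hand sides assemble into $\norm[L^{\infty}\tilde{H}^{k+1}]{u}+\norm[L^{\infty}H^{k}]{\pd[t]u}+\norm[L^{5}W^{k,10}]{u}+\norm[L^{4}W^{k,12}]{u}$; on the right, $\sum_{|\beta|\le k}\norm[L^{2}]{\grad_z\pd[\beta]\phi}\lesssim\norm[\tilde{H}^{k+1}]{\phi}$, $\sum_{|\beta|\le k}\norm[L^{2}]{\pd[\beta]\psi}=\norm[H^{k}]{\psi}$, and $\sum_{|\beta|\le k}\norm[L^{1};L^{2}]{\pd[\beta]f}=\int_{\reals}\sum_{|\beta|\le k}\norm[L^{2}]{\pd[\beta]f(t)}\,\dt\lesssim\norm[L^{1}H^{k}]{f}$. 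This is exactly the claimed inequality.

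I do not expect a genuine obstacle: the proposition is essentially bookkeeping built on the scalar energy and Strichartz estimates already quoted. The only point that warrants attention is the one emphasized above --- that, because the target norms involve only spatial regularity, commuting spatial derivatives alone suffices and no control on $\pd[t]f$ is ever required. This is also precisely what makes the iteration in the nonlinear persistence-of-regularity argument that invokes this proposition close, since there the forcing term is $-f(u)$ and one controls it only in $L^{1}H^{k}$.
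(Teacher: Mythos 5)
Your proposal is correct and is essentially the argument the paper has in mind: the appendix states only that the proposition follows ``by commuting the wave operator with powers of the Laplacian,'' and your version commutes with $\pd[\beta]$ for $|\beta|\le k$ instead, which is the same idea (both are constant-coefficient spatial operators commuting with $\Box$) and is, if anything, the cleaner way to assemble the $\tilde H^{k+1}$, $H^k$, and $W^{k,r}$ norms. Your bookkeeping of which derivatives land where, the observation that no control of $\pd[t]f$ is ever needed, and the density reduction are all sound.
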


We now show that if $u$ is a Shatah-Struwe solution of
equation~\eqref{CP} with initial data $(\phi, \psi) \in
\tilde{H}^{k+1}\times H^{k}$, then $u$ has more regularity (and finite
global Strichartz norms).
\begin{thm}
  \label{thm:higher-reg-NL}
  If $u$ solves equation~\eqref{CP} with initial data $(\phi, \psi)
  \in \tilde{H}^{k+1}\times H^{k}$, then $u$ has more regularity,
  i.e., the norms
  \begin{equation*}
    \norm[L^{\infty}\tilde{H}^{k+1}]{u},
    \norm[L^{\infty}H^{k}]{\pd[t]u}, \norm[L^{5}W^{k,10}]{u}, \text{
      and }\norm[L^{4}W^{k,12}]{u} 
  \end{equation*}
  are finite.
\end{thm}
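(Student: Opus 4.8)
The plan is to argue by induction on $k$, with the global energy-level Strichartz bound of Bahouri--G{\'e}rard as the base case and the higher-order estimate of Proposition~\ref{prop:higher-order-energy-strichartz} combined with a short-interval absorption argument for the inductive step. For $k=0$ there is nothing to prove: Theorem~\ref{wellposed} and \eqref{bg1} give $u\in C^{0}(\reals;\hone)\cap C^{1}(\reals;L^{2})\cap L^{5}(\reals;L^{10})$, and the Strichartz estimate~\eqref{strichartzestimate} with $r=12$ then also places $u$ in $L^{4}(\reals;L^{12})$. Assume now the statement holds with $k$ replaced by $k-1$; in particular $\norm[{L^{\infty}\tilde{H}^{k}}]{u}$, $\norm[{L^{\infty}H^{k-1}}]{\pd[t]u}$, $\norm[{L^{5}W^{k-1,10}}]{u}$, and $\norm[{L^{4}W^{k-1,12}}]{u}$ are all finite.

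The heart of the argument is a bound for $\norm[{L^{1}(I;H^{k})}]{f(u)}$ on an interval $I\subseteq\reals$. By the chain rule, $\grad_{z}^{k}f(u)$ is a finite sum of terms of the form $f^{(m)}(u)\,\grad^{a_{1}}u\cdots\grad^{a_{m}}u$ with $a_{1}+\dots+a_{m}=k$; the structure $f(u)=uf_{0}(|u|^{2})$ together with (A1), (A4) and the Sobolev embedding (which gives $u(t,\cdot)\in L^{\infty}$ once the inductive regularity is available) ensure that each such term is pointwise dominated by a product of five factors of $u$ and its derivatives, at most one of which carries $k$ derivatives while the rest carry fewer. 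For the top-order term $f'(u)\grad^{k}u$ we use $|f'(u)|\le\tilde C|u|^{4}$ (as in the proof of Theorem~\ref{L2bound-radf}), H{\"o}lder in $z$ with exponents $10,10,10,10,10$, and then H{\"o}lder in $t$ with five $L^{5}$ factors producing $L^{1}$, to obtain
\[
  \norm[{L^{1}(I;L^{2})}]{f'(u)\grad^{k}u}\le C\norm[{L^{5}(I;L^{10})}]{u}^{4}\,\norm[{L^{5}(I;W^{k,10})}]{u}.
\]
Every remaining term distributes the $k$ derivatives among at least two factors, so each factor involves at most $k-1$ derivatives; by H{\"o}lder and Gagliardo--Nirenberg/Sobolev interpolation among the norms furnished by the inductive hypothesis, the $L^{1}(I;L^{2})$ norm of each such term is bounded by a constant $B<\infty$, uniformly over all intervals $I\subseteq\reals$.

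Since $u\in L^{5}(\reals;L^{10})$, we may partition $\reals$ into finitely many intervals $I_{1},\dots,I_{N}$, with $N$ depending only on $\norm[{L^{5}(\reals;L^{10})}]{u}$ and a parameter $\epsilon$, so that $\norm[{L^{5}(I_{j};L^{10})}]{u}<\epsilon$ on each. On $I_{j}=[t_{j},t_{j+1}]$, Proposition~\ref{prop:higher-order-energy-strichartz} applied to $\Box u=-f(u)$ with initial time $t_{j}$ (together with the standard continuity argument in the higher Strichartz norm, which justifies that the solution a priori lies in $\tilde{H}^{k+1}\times H^{k}$ there) bounds $\norm[{L^{\infty}(I_{j};\tilde{H}^{k+1})}]{u}+\norm[{L^{\infty}(I_{j};H^{k})}]{\pd[t]u}+\norm[{L^{5}(I_{j};W^{k,10})}]{u}+\norm[{L^{4}(I_{j};W^{k,12})}]{u}$ by a constant times $\norm[{\tilde{H}^{k+1}}]{u(t_{j})}+\norm[{H^{k}}]{\pd[t]u(t_{j})}+\norm[{L^{1}(I_{j};H^{k})}]{f(u)}$. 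Inserting the nonlinear estimate and choosing $\epsilon$ so that $C\epsilon^{4}\le\tfrac12$, the $\norm[{L^{5}(I_{j};W^{k,10})}]{u}$ term on the right is absorbed into the left, leaving a bound in terms of the higher energy of $u$ at $t_{j}$ and of $B$. In particular the higher energy of $u$ at $t_{j+1}$ is controlled by that at $t_{j}$, so iterating over the $N$ intervals bounds all four norms in terms of the initial data $(\phi,\psi)\in\tilde{H}^{k+1}\times H^{k}$, completing the induction.

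The main obstacle is the nonlinear estimate: organizing the chain-rule expansion of $\grad^{k}f(u)$ so that precisely the highest-order contribution appears in the absorbable form $\norm[{L^{5}L^{10}}]{u}^{4}\,\norm[{L^{5}W^{k,10}}]{u}$ while every other contribution is subsumed by the inductive hypothesis via H{\"o}lder and Sobolev interpolation, and carrying this out for a general smooth nonlinearity of the form $uf_{0}(|u|^{2})$ rather than an exact power. The interval-partition and absorption steps are routine once this estimate is in place.
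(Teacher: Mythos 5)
Your proof reaches the same conclusion by a genuinely different route. The paper proves a Gronwall inequality directly on the higher-order energy $E_{k+1}(t)$: differentiating in time, using the equation, and estimating the top-order part of $\|\nabla^{k}f(u)(t)\|_{L^2}$ by $C\|u(t)\|_{W^{k-1,12}}^{4}E_{k+1}(t)$ gives $E_{k+1}(t)\le E_{k+1}(0)\exp\bigl(C\|u\|_{L^4W^{k-1,12}}^{4}\bigr)$, which is finite by the inductive hypothesis; a single global application of Proposition~\ref{prop:higher-order-energy-strichartz}, with $\|f(u)\|_{L^1H^k}$ bounded by $\|u\|_{L^4W^{k-1,12}}^{4}\|u\|_{L^\infty\tilde H^{k+1}}$, then supplies the $L^5W^{k,10}$ and $L^4W^{k,12}$ norms. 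You instead partition $\reals$ into finitely many intervals on which $\|u\|_{L^5L^{10}}$ is small, absorb the top-order term $C\|u\|_{L^5L^{10}}^{4}\|u\|_{L^5W^{k,10}}$ into the left-hand side of the Strichartz estimate on each interval, and propagate the higher energy from endpoint to endpoint. Both routes are standard and both rely on the same (unstated) local wellposedness at higher regularity to justify that the higher energy or the higher Strichartz norm is a priori finite before the respective estimate closes. The paper's Gronwall argument cleanly decouples the $L^\infty$-in-time energy bound from the subsequent Strichartz bound and needs no smallness parameter; your interval-partition argument has the advantage of producing a bound on each interval that does not reference the rest of the time axis, which is the more flexible formulation for localization purposes. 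Both proofs leave the detailed Fa\`a di Bruno bookkeeping for the lower-order pieces of $\nabla^{k}f(u)$ to the reader, and in both this is the only part where anything nontrivial remains to be checked.
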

\begin{proof}
  We rely on the estimates from
  Proposition~\ref{prop:higher-order-energy-strichartz} and an
  additional energy estimate.  We proceed via induction.

  Let us first show the claim for $k=1$.  We start by defining
  \begin{equation*}
    E_{2}(t)^{2} = \frac{1}{2}\sum_{\alpha = 1}^{3} \int \left(\left| \pd[\alpha] \grad u\right| ^{2}
      + \left( \pd[\alpha] \pd[t] u \right)^{2} \right)+ \frac{1}{2}\int \left( \left| \grad
        u\right|^{2} + \left( \pd[t]u\right)^{2}\right).
  \end{equation*}
  Differentiating this expression and using equation~\eqref{CP} yields
  that
  \begin{equation*}
    E_{2}(t)E_{2}'(t) \leq C E_{2}\left(
      \norm[L^{12}_{z}]{u(t)}^{4} \norm[L^{6}_{z}]{\pd[\alpha]u(t)} +
      \norm[L^{12}_{z}]{u(t)}^{4}\norm[L^{6}_{z}]{u(t)}\right) .
  \end{equation*}
  Sobolev embedding then shows that
  \begin{equation*}
    \frac{E_{2,}'(t)}{E_{2}(t)} \leq C'\norm[L^{12}_{z}]{u(t)}^{4},
  \end{equation*}
  and so integrating both sides yields that
  \begin{equation*}
    E_{2}(t) \leq E_{2}(0) e^{C'\norm[L^{4}L^{12}]{u}^{4}}.
  \end{equation*}
  We know already by the Strichartz estimates in Theorem~\ref{GV} that
  $u\in L^{4}L^{12}$, so we may conclude that if $(\phi, \psi)\in
  H^{2}\times H^{1}$, then $u \in L^{\infty}H^{2}$.  We now use the
  energy estimate from
  Proposition~\ref{prop:higher-order-energy-strichartz} to bound
  \begin{equation*}
    \norm[L^{4}W^{1,12}]{u} + \norm[L^{5}W^{1,10}]{u} \leq C\left(
      \norm[H^{2}]{\phi} + \norm[H^{1}]{\psi} + \norm[L^{1}H^{1}]{f(u)}\right).
  \end{equation*}
  We note that $\norm[L^{1}H^{1}]{f(u)} \leq C\left(
    \norm[L^{1}L^{2}]{|u|^{4}u} + \sum_{\alpha =
      1}^{3}\norm[L^{1}L^{2}]{|u|^{4}\pd[\alpha]u}\right)$.  The first
  of these two terms is bounded by the result of Bahouri and
  G{\'e}rard, while the second term is bounded by
  $\norm[L^{4}L^{12}]{u}^{4}\norm[L^{\infty}\tilde{H}^{2}]{u}$ via
  H{\"o}lder's inequality and Sobolev embedding and so both are
  finite.  This finishes the proof for $k=1$.
  
  For the inductive step, we suppose that
  $\norm[L^{\infty}\tilde{H}^{k}]{u}$,
  $\norm[L^{\infty}H^{k-1}]{\pd[t]u}$, $\norm[L^{5}W^{k-1,10}]{u}$,
  and $\norm[L^{4}W^{k-1,10}]{u}$ are all finite.
  \begin{equation*}
    E_{k+1}(t)^{2} = \frac{1}{2}\sum_{|\alpha|\leq k-1}\int \left| \pd[\alpha]\grad u\right|^{2} +
    \left( \pd[\alpha] \pd[t]u\right)^{2} + E_{k}^{2}.
  \end{equation*}
  Differentiating both sides shows that
  \begin{equation*}
    E_{k+1}(t) E_{k+1}'(t) \leq C E_{k+1}(t) \norm[H^{k-1}]{|u|^{4}u}.
  \end{equation*}
  A similar argument to the above allows us to bound
  $\norm[H^{k-1}]{\pd[\alpha]f(u)}$ by
  $C\norm[W^{k-1,12}]{u}^{4}E_{k+1}(t)$, and so
  \begin{equation*}
    E_{k+1}(t) \leq E_{k+1}(0)e^{C\norm[L^{4}W^{k-1,12}]{u}^{4}}.
  \end{equation*}
  This demonstrates that $u\in L^{\infty}\tilde{H}^{k+1}$ and
  $\pd[t]u\in L^{\infty}H^{k}$.  We again use the estimate
  from Proposition~\ref{prop:higher-order-energy-strichartz} to bound
  \begin{equation*}
    \norm[L^{4}W^{k,12}]{u} + \norm[L^{5}W^{k,10}]{u} +
    \norm[L^{\infty}\tilde{H}^{k+1}]{u}  +
    \norm[L^{\infty}H^{k}]{\pd[t]u} \leq C\left(\norm[H^{k+1}]{\phi} +
      \norm[H^{k}]{\psi} + \norm[L^{1}H^{k}]{|u|^{4}u} \right),
  \end{equation*}
  and again use Sobolev embedding to bound $\norm[L^{1}H^{k}]{|u|^{4}u}
  \leq \norm[L^{4}W^{k-1,12}]{u}^{4}\norm[L^{\infty}\tilde{H}^{k+1}]{u} < \infty$.
\end{proof}

\begin{remark}
  \label{rem:higher-t-derivs}
  The above argument also shows that we may bound higher derivatives
  in $t$ by using the equation.
\end{remark}

\bibliographystyle{plain}
\nocite{Melrose:1994,Melrose:1995,Sa-Barreto:2005a,Sa-Barreto:2008}
\nocite{tao:2006,tao:2006a}
\bibliography{papers}

\end{document}